\documentclass[11pt]{article}
\usepackage{amsmath, amssymb}
\usepackage{amsthm}
\usepackage{amstext}
\usepackage{appendix}
\usepackage{verbatim}
\usepackage{cases}
\newtheorem{theorem}{Theorem}[section]
\newtheorem{corollary}{Corollary}[section]
\newtheorem{proposition}{Proposition}[section]
\newtheorem{lemma}{Lemma}[section]

\newtheorem{conjecture}{Conjecture}[section]
\begin{document}

\title{Entire solutions of the magnetic Ginzburg-Landau equation in $\mathbb{R}^4$}
\author{Yong Liu\\Department of Mathematics, \\University of Science and Technology of China, Hefei, China,\\Email: yliumath@ustc.edu.cn
	\and 	Xinan Ma\\Department of Mathematics, \\University of Science and Technology of China, Hefei, China,\\Email: xinan@ustc.edu.cn
	\and Juncheng Wei\\Department of Mathematics, \\University of British Columbia, Vancouver, B.C., Canada, V6T 1Z2\\Email: jcwei@math.ubc.ca
	\and Wangze Wu\\Department of Mathematics, \\University of Science and Technology of China, Hefei, China,\\Email: wuwz18@mail.ustc.edu.cn}
\maketitle

\begin{abstract}
	We construct entire solutions of the magnetic Ginzburg-Landau equations in dimension 4 using Lyapunov-Schmidt reduction. The zero set of these solutions are close to the minimal submanifolds studied by Arezzo-Pacard\cite{Arezzo}. We also show the existence of a saddle type solution to the equations, whose zero set consists of two vertical planes in $\mathbb{R}^4$. These two types of solutions are believed to be energy minimizers of the corresponding energy functional and lie in the same connect component of the moduli space of entire solutions.
\end{abstract}
\section{Introduction}

In this paper, we consider the following system of magnetic Ginzburg-Landau equations:
\begin{equation}\label{Ginzburg Landau equations}
\begin{cases}
-\Delta_A\psi + \frac{\lambda}{2}(|\psi|^2 - 1)\psi = 0 \text{ in } \mathbb{R}^n,\\
d^*dA - Im(\nabla_A\psi\cdot\bar \psi) = 0 \text{ in } \mathbb{R}^n.
\end{cases}
\end{equation}
Here $\psi : \mathbb R^n \rightarrow \mathbb C$ is a complex valued function,  $A \in \Omega^1(\mathbb R^n, \mathbb R)$ is a one form, and $\lambda>0$ is a parameter.
We use $\nabla_A = \nabla - iA$ to denote the covariant gradient and $\Delta_A = \nabla_A\cdot\nabla_A$, $d^* = (-1)^{n(k+1)+1}*d*: \Omega^k(\mathbb R^n) \rightarrow \Omega^{k-1}(\mathbb R^n)$, where the operator $*:\Omega^k \rightarrow \Omega^{n- k}$ is the usual Hodge star operator on $\mathbb{R}^n$ with the standard metric.

The standard macroscopic theory of superconductivity is due to Ginzburg and Landau in \cite{Ginzburg} . The equations can be derived from the microscopic theory by the works of Bardeen, Cooper and Schrieffer \cite{Frank}, \cite{GLP} . The constant $\lambda > 0$ is depending on the material in question: when $\lambda < 1$, the material is of type I; when $\lambda > 1$, the material is of type II.
	
	In the case of dimension $n = 2$, the system \eqref{Ginzburg Landau equations} has a family of ``radial'' $j$-vortex solutions of the form $u_j = (\psi^{(j)}, A^{(j)})$, $j=1,...$, with
	\begin{equation}
		\psi^{(j)}(x) = U_j(r)e^{ij\phi}, \text{ and } A^{(j)} = V_j(r)\nabla(j\phi).
	\end{equation}
Here $(r, \phi)$ is the polar coordinate and $j =$ deg $\psi^{(j)}$ is an integer, and $U_j,V_j$ are real valued functions determined by a system of ODE. Moreover, both $U_j$ and  $ V_j$ tend to $1$ as $r\rightarrow \infty$  with exponential rate. Existence of this kind of solutions was proved in \cite{BC} using variational methods. The stability properties of these solutions were established in \cite{G}. More precisely, we now know that for $\lambda \le 1$, $u_j$ is stable; for $\lambda > 1$, $u_j$ is stable provided that $j = \pm 1$.
Using these $j$-vortex radial solutions as basic blocks, nonradial solutions in $\mathbb{R}^2$ of equation (\ref{Ginzburg Landau equations}) has been constructed in \cite{Ting1} using a finite dimensional Lyapunov-Schmidt reduction procedure. Using similar methods, solutions to the magnetic Ginzburg-Landau equation with external potential have been constructed in \cite{PTW}.

The magnetic Ginzburg-Landau equations (\ref{Ginzburg Landau equations}) is the Euler-Lagrange equations for the following energy functional:
\begin{equation}\label{energy functional}
E(A, \psi) = \int_{\mathbb R^n} |\nabla_A\psi|^2 + |dA|^2 + \frac{\lambda}{4}(1 - |\psi|^2)^2.
\end{equation}
Note that after a rescaling by a parameter $\epsilon$, the magnetic Ginzburg-Landau equations become
\begin{equation}\label{rescaled Ginzburg}
	\begin{cases}
		-\epsilon^2\Delta_A\psi + \frac{\lambda}{2}(|\psi|^2 - 1)\psi = 0,\\
		\epsilon^2d^*dA - Im(\nabla_A\psi\cdot\bar \psi) = 0.
	\end{cases}
\end{equation}
The corresponding energy functional is
\begin{equation*}
	E_\epsilon(A, \psi) =  \int_{\mathbb R^n}|\nabla_A\psi|^2 + \epsilon^2|dA|^2 + \frac{\lambda}{4\epsilon^2}(1 - |\psi|^2)^2.
\end{equation*}

For $\lambda=1$, it is called the self-dual case. In this case, the system can be reduced to first order equations. In this special case, it is proved in \cite{Alessandro} that if $(A_\epsilon, \psi_\epsilon)$ is a family of critical points for $E_\epsilon$ with a uniform finite energy bound, then as $\epsilon \rightarrow 0$, the energy measures
\begin{equation*}
	\mu_\epsilon:= \frac{1}{2\pi}[|\nabla_{A_\epsilon}\psi_\epsilon|^2 + \epsilon^2|dA_\epsilon|^2 + \frac{\lambda}{4\epsilon^2}(1 - |\psi_\epsilon|^2)^2]\mathcal H^n,
\end{equation*}
(where $\mathcal H^n$ means n-dimensional Hausdorff measures) converge subsequentially to the weight measure $\mu$ of a stationary, integral $(n - 2)$-varifold. In other words, when $\epsilon$ tends to $0$, the zero sets of $\psi_\epsilon$ are close to a codimension two minimal submanifold.
They also proved existence of varifold type solutions using variational methods.

In this paper, we shall investigate the general case of $\lambda>0$. We would like to construct some special smooth solutions from certain minimal submanifold with nice geometrical structure.  We will focus on the case of $n = 4$. Let us consider the following two planes:
\begin{equation*}
	\begin{aligned}
		\Pi_1 := \{(x_1, y_1, 0, 0) \in \mathbb R^4: x_1, y_1 \in \mathbb R\},\\
		\Pi_2 := \{( 0, 0, x_2, y_2) \in \mathbb R^4: x_2, y_2 \in \mathbb R\}.
	\end{aligned}
\end{equation*}
They intersect at the origin. Let $(r_1, \theta_1)$ and $(r_2, \theta_2)$ be the polar coordinates in $\Pi_1$ and $\Pi_2$, respectively. Our first result states that there exists a saddle type solution. This is the content of the following
\begin{theorem}\label{conclusion1}
	For $n = 4$, there exists a solution $(A, \psi)$ to \eqref{Ginzburg Landau equations}, satisfying
	\begin{equation*}
		\begin{aligned}
			&\psi = f(r_1, r_2) e^{i(\theta_1 + \theta_2)},\\
		&A = g(r_1, r_2)d\theta_1 + h(r_1, r_2)d\theta_2, \\
		\end{aligned}
	\end{equation*}
where $f, g, h$ are three real valued functions satisfying a system of second order PDE in the $(r_1,r_2)$ variables. The function $f$ vanish at the axes of the $r_1$-$r_2$ plane.
\end{theorem}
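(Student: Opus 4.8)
The plan is to use the symmetric ansatz to reduce the PDE system to a variational problem on the quarter-plane $\{(r_1,r_2): r_1,r_2\geq 0\}$, and then to minimize the reduced energy in a suitable weighted Sobolev space with the correct boundary behavior.

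I would first substitute the ansatz $\psi = f(r_1,r_2)e^{i(\theta_1+\theta_2)}$ and $A = g(r_1,r_2)\,d\theta_1 + h(r_1,r_2)\,d\theta_2$ directly into the energy functional \eqref{energy functional}. Using that $|\nabla\theta_j|^2 = r_j^{-2}$ and that the cross terms integrate against the volume form $r_1 r_2\,dr_1\,dr_2\,d\theta_1\,d\theta_2$, one computes
\begin{equation*}
E(A,\psi) = c\int_{0}^{\infty}\!\!\int_0^\infty \Big[ f_{r_1}^2 + f_{r_2}^2 + \big(\tfrac{1-g}{r_1}\big)^2 f^2 + \big(\tfrac{1-h}{r_2}\big)^2 f^2 + \big(\tfrac{g_{r_1}}{r_1}\big)^2 r_1^2 \cdots + \tfrac{\lambda}{4}(1-f^2)^2\Big] r_1 r_2\, dr_1\, dr_2,
\end{equation*}
where the dots denote the remaining $|dA|^2$ terms coming from $d(g\,d\theta_1 + h\,d\theta_2)$; the precise coefficients require a short but routine computation with the Hodge star in the coordinates $(r_1,\theta_1,r_2,\theta_2)$. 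The key structural point is that this reduced functional is manifestly nonnegative, is convex in the gauge variables $(g,h)$ for fixed $f$, and that the Euler--Lagrange equations of this reduced functional are exactly the claimed second order PDE system for $(f,g,h)$ in the $(r_1,r_2)$ variables, provided one also checks that a critical point of the reduced functional yields a genuine critical point of $E$ on the full space (i.e. that no information is lost by restricting to the symmetric class — this follows from the principle of symmetric criticality, since the symmetry group $O(2)\times O(2)$ acts by isometries preserving the equations).

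Next I would set up the minimization. The natural function space forces $f\to 1$, $g\to 1$, $h\to 1$ at infinity (to make the potential and the covariant-derivative terms integrable), and forces $f=0$ on the two axes $\{r_1=0\}$ and $\{r_2=0\}$: along $\{r_1=0\}$ the phase $e^{i\theta_1}$ is singular, so finiteness of $\int (\tfrac{1-g}{r_1})^2 f^2$ near that axis, combined with $g(0,r_2)=0$ being forced by smoothness of $A$ at the axis, demands $f(0,r_2)=0$, and symmetrically on the other axis. So I would fix smooth reference functions $(f_0,g_0,h_0)$ with these boundary values and minimize $E$ over $(f_0 + \dot f, g_0 + \dot g, h_0 + \dot h)$ with $(\dot f,\dot g,\dot h)$ in the appropriate weighted space vanishing at infinity and on the axes. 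Existence of a minimizer follows from the direct method: the functional is coercive and weakly lower semicontinuous, and the constraints $|f|\le 1$ (enforced by truncation, since replacing $f$ by $\min(|f|,1)\operatorname{sgn} f$ does not increase the energy) give compactness in the relevant sense. Elliptic regularity bootstrapping then makes the minimizer smooth in the interior, and the sign choice $f\ge 0$ can be imposed; in particular $f$ vanishes precisely on the axes, so the zero set of $\psi$ is $\Pi_1\cup\Pi_2$.

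The main obstacle I anticipate is not the abstract existence but two more delicate points. First, the behavior at the \emph{intersection} of the two axes (the origin), where both singular phases collide: one must verify that a finite-energy symmetric configuration, which a priori only controls $f/r_1$ and $f/r_2$ in $L^2$ with the degenerate weight $r_1r_2$, actually produces $\psi$ that extends to a $C^1$ (indeed smooth) function of $x\in\mathbb{R}^4$ vanishing on $\Pi_1\cup\Pi_2$ — this is a removable-singularity / regularity statement for the degenerate elliptic reduced system near the corner $\{r_1=r_2=0\}$, and is where the computation genuinely has to be done carefully rather than quoted. Second, one must confirm that the minimizer is nontrivial, i.e. that $f\not\equiv 0$; this is immediate because any competitor with $f\equiv 0$ has infinite energy (the potential term $\tfrac\lambda4\int(1-f^2)^2 r_1 r_2$ diverges), so the infimum is attained at a configuration with $f$ not identically zero. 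Checking the exponential decay $f,g,h\to 1$ and deriving the exact form of the limiting ODE system along the way is then a standard barrier/comparison argument analogous to the $n=2$ vortex analysis in \cite{BC}.
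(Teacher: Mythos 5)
Your overall strategy --- reduce to a system for $(f,g,h)$ via the symmetric ansatz and then minimize the reduced energy --- is the same as the paper's (Lemma 2.1 and the theorem following it). However, there is a genuine gap in the existence step: you propose to run the direct method for $E$ on the whole quarter-plane, but in your admissible class ($f=0$ on the axes, $f,g,h\to 1$ at infinity) \emph{every} competitor has infinite energy. The zero set of $\psi$ is the union of two planes, and the energy carried in a unit neighbourhood of $\Pi_1\cup\Pi_2$ inside $B_R$ grows like $R^2$; concretely, already a term such as $\int f_{r_2}^2\, r_1 r_2\, dr_1\, dr_2$ diverges for a product-type competitor because $\int^{\infty} r_1\, dr_1=\infty$. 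So the infimum is $+\infty$, coercivity and weak lower semicontinuity give nothing, and your nontriviality argument (``$f\equiv0$ has infinite energy'') does not single anything out, since so does everything else. Relatedly, if one instead drops the axis conditions in the hope of deriving them from finiteness of energy, the reduced functional is minimized with value $0$ by the gauge-singular configuration $f=g=h\equiv1$, which solves the reduced Euler--Lagrange system but is not a solution of \eqref{Ginzburg Landau equations} across $\Pi_1\cup\Pi_2$; the vanishing of $f$ on the axes therefore cannot be extracted from finite energy alone and must be imposed.

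The repair is exactly what the paper does: minimize $E_k$ on an exhausting sequence of bounded domains $\Omega_k$, with the vanishing of $f$ on the axes built into the function space --- the paper reflects to $\{r_1\in\mathbb R,\ r_2>0\}$ and requires $f$ odd and $g$ even in $r_1$ together with $g(r_1,r_2)=h(r_2,r_1)$, so the constraint enters as a symmetry rather than a boundary condition --- then takes minimizers $(f_k,g_k,h_k)$ with $0\le f_k,g_k,h_k\le 1$, derives locally uniform energy and $W^{2,2}$ bounds from the equations, and passes to the limit in $H^1_{loc}$. Your remaining points (symmetric criticality, truncation to $0\le f\le1$, the removable singularity at the corner $r_1=r_2=0$, and strict positivity of $f$ off the axes via the maximum principle) are consistent with the paper and would survive this modification, but as written the core existence argument does not go through.
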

We shall call the solution stated in Theorem 1 as a saddle type solution. This is motivated by the saddle solution of the Allen-Cahn equation \begin{equation}\label{Allen-Cahn equation}
-\Delta u + u^3 - u = 0, \text{ in }\mathbb R^n.
\end{equation}
Indeed, for $n=2$, the Allen-Cahn equation has a solution $u$ which vanishes precisely at the two axes of the plane.

To proceed, let us recall the following minimal submanifold studied in Arezzo-Pacard\cite{Arezzo}. Let  $$\Gamma =\{(\rho e^{i\theta}, :\rho^{-1}e^{i\theta}): \rho,\theta \in \mathbb{R}\}.$$ Consider the scaled surface $\Gamma_\epsilon := \frac{1}{\epsilon}\Gamma$. As $\epsilon$ tends to $\infty$, formally $\Gamma_\epsilon$ tends to $\Pi_1 \cup \Pi_2$.  Our second result is the following theorem, where we use $\pm 1$ vortex solutions to construct solutions in $\mathbb{R}^4$.
\begin{theorem}\label{conclusion2}
	For $\epsilon>0$ small enough, there exists a sequence of solutions $(A_\epsilon, \psi_\epsilon)$ to \eqref{rescaled Ginzburg}. As $\epsilon \rightarrow 0$, whose zero sets of $\psi_\epsilon$ and $A_\epsilon$  converge to $\Gamma$ uniformly in the entire space.
\end{theorem}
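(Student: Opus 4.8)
\emph{Proof strategy.} The plan is to build $(A_\epsilon,\psi_\epsilon)$ by an infinite–dimensional Lyapunov–Schmidt reduction over the minimal surface $\Gamma$, with the degree $\pm 1$ planar vortex $u_1=(U_1(r)e^{i\phi},V_1(r)d\phi)$ of \eqref{Ginzburg Landau equations} serving as the transverse profile. First I would fix the geometry: using the Arezzo–Pacard description of $\Gamma\subset\mathbb{R}^4=\mathbb{C}^2$ as a smooth minimal surface asymptotic to $\Pi_1\cup\Pi_2$, introduce Fermi (normal) coordinates $X=y+\zeta$ in a tubular neighborhood, with $y\in\Gamma$ and $\zeta\in N_y\Gamma\cong\mathbb{C}$, and trivialize the normal bundle compatibly with the natural winding of the phase around $\Gamma$ (which is globally defined since $\Gamma$ is co-oriented). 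In these coordinates the flat metric and the operator $\Delta_A$ are the product expressions on $\Gamma\times\mathbb{R}^2_\zeta$ up to corrections governed by the second fundamental form $\mathrm{II}$ of $\Gamma$ and its intrinsic curvature, all decaying at the Arezzo–Pacard rate along the two planar ends.

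Second, I would assemble the approximate solution: set $\psi_{app}=U_1(|\zeta|/\epsilon)\,e^{i(\phi_\zeta+\text{holonomy correction})}$ and $A_{app}=V_1(|\zeta|/\epsilon)\,d\phi_\zeta$, supported transversally and centered at $\zeta=0$, and allow the core locus to be displaced by an unknown normal section $\phi:\Gamma\to N\Gamma$. Inserting $(A_{app},\psi_{app})$ into \eqref{rescaled Ginzburg} and using that $\Gamma$ is minimal, the $O(\epsilon)$ term (proportional to the mean curvature acting on $u_1$) vanishes; the remaining error is $O(\epsilon^2)$ measured in weighted norms that combine exponential decay in $\zeta/\epsilon$ (from $U_1,V_1\to 1$ exponentially) with the geometric decay on the ends of $\Gamma$.

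Third, I would develop the linear theory. After imposing a Lorenz/Coulomb gauge relative to $(A_{app},\psi_{app})$ to remove the gauge degeneracy, the linearized operator $\mathcal{L}_\epsilon$ is, slice by slice, the linearized vortex operator $L_1$ on $\mathbb{R}^2$ plus $\epsilon^2$ times a Jacobi-type operator along $\Gamma$ plus lower order terms. The bounded kernel of $L_1$ modulo gauge is exactly the two real translations $\partial_{\zeta_1}u_1,\partial_{\zeta_2}u_1$ (nondegeneracy of the $\pm1$ vortex); projecting these out, one must prove that $\mathcal{L}_\epsilon$ is invertible on the orthogonal complement with $\epsilon$-uniform bounds in the chosen weighted Hölder spaces. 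Granting this, the contraction mapping principle solves the orthogonal part of the nonlinear equation, producing a correction of size $O(\epsilon^2)$ depending smoothly on $\phi$.

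Finally, I would solve the reduced equation: testing the full system against the kernel elements yields a nonlinear equation for $\phi$ whose principal part is the Jacobi operator $J_\Gamma=\Delta^{\perp}_{\Gamma}+|\mathrm{II}|^2$ of $\Gamma$, with right-hand side $O(\epsilon^2)$. The nondegeneracy of $\Gamma$ from Arezzo–Pacard gives invertibility of $J_\Gamma$ in the weighted spaces, so a further contraction argument produces $\phi=\phi_\epsilon$ with $\|\phi_\epsilon\|\to 0$. Undoing the gauge fixing (checking that the Lorenz condition propagates) gives a genuine solution of \eqref{rescaled Ginzburg}; since the nodal set of $\psi_\epsilon$ — and the concentration locus of $A_\epsilon$ — is a normal graph over $\Gamma$ by the section $\phi_\epsilon$ tending to $0$ uniformly, both converge to $\Gamma$ as $\epsilon\to 0$. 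I expect the main obstacle to be precisely the $\epsilon$-uniform linear invertibility on the orthogonal complement, together with the choice of weighted norms that simultaneously control the microscopic $\epsilon$-scale vortex core, the interaction along $\Gamma$, and the degeneration of the geometry at the asymptotically planar ends.
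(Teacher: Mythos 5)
Your proposal follows essentially the same route as the paper: Fermi coordinates around $\Gamma$ (Section 3), a $\pm1$-vortex approximate solution with an unknown normal displacement playing the role of your section $\phi$ (Section 5), gauge fixing by adding the operator $TT^*$ followed by a projected linear and nonlinear theory with $\epsilon$-uniform bounds (Sections 6--7), and a reduced equation whose principal part is the Jacobi operator of $\Gamma$, solved via the Arezzo--Pacard linear theory before undoing the gauge fixing (Sections 4 and 8). The only notable divergences are technical: since $|\nabla\varphi|^2=O(\tilde r^{-1})$ near the vortex core the paper must use weighted $L^q$ norms with $1<q<2$ rather than the Hölder norms you propose, and the global phase extension is built via the Lin--Rivi\`ere harmonic one-form construction rather than a bare ``holonomy correction'' --- both being refinements of issues you already identify as the expected obstacles.
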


We use infinite dimensional Lyapunov-Schmidt method to construct these solutions. More precise asymptotic behavior of these solutions will be provided in the subsequent sections.
We point out Brendle has done some related works for the self-dual magnetic Ginzburg-Landau equation in the unpublished paper\cite{Brendle}. Part of our arguments in this paper are inspired by his work. 

It is expected that the saddle type solution described in Theorem 1.1 and the solutions in Theorem 1.2 lie on the same connected component of the moduli space of entire solutions of the magnetic Ginzburg-Landau equation. This is also analogous to the case of Allen-Cahn equation. Indeed, for any even dimension not less than 8, the Allen-Cahn equation has a saddle type equation which vanishes on the Simons' cone. Moreover, there is family of solutions whose zero sets are asymptotic to the Simons' cone, See \cite{Cabre,Liu,LiuWang,Savin} for related results. These solutions are conjectured to be global minimizers of the corresponding Allen-Cahn energy functional, since the Simons' cone is area minimizing when the dimension $n\ge 8$. One also know that dimension 8 is the critical dimension. Sine the union of the two planes $\Pi_1$ and $\Pi_2$ are also area minimizing(See \cite{Lawlor}) in $\mathbb{R}^4$, we formulate the following
\begin{conjecture}
	The solutions of the magnetic Ginzburg-Landau equations provided in Theorem 1.1 and Theorem 1.2 are energy minimizers of the energy functional.
\end{conjecture}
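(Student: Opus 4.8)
The natural route is through the $\Gamma$-convergence picture relating $E_\epsilon$ to the codimension-two area functional, combined with the area-minimality of $\Pi_1\cup\Pi_2$ and of $\Gamma$ (Lawlor \cite{Lawlor}, Arezzo--Pacard \cite{Arezzo}). Concretely, I would first establish a sharp lower bound
\begin{equation*}
E_\epsilon(A,\psi;B_R)\ \ge\ 2\pi\,\mathbf{M}(\Sigma;B_R)\ -\ C\,\epsilon\,\big(1+\mathbf{M}(\Sigma;B_{2R})\big),
\end{equation*}
valid for \emph{every} $\lambda>0$ and every finite-energy configuration, where $\Sigma$ is the vorticity (Jacobian) current of $(A,\psi)$ — the distributional limit, suitably normalized, of the gauge-invariant supercurrent's curl, which is an integral $(n-2)$-current — and $\mathbf{M}(\,\cdot\,;B_R)$ denotes mass in $B_R$. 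This is a Jerrard--Soner / Alberti--Baldo--Orlandi type estimate, proved by slicing $B_R$ by $2$-planes transverse to $\Sigma$ and using the elementary two-dimensional fact that a configuration of local degree $d$ around an isolated vortex carries energy at least $2\pi|d|$ (for $|d|=1$ this leading constant is $\lambda$-independent; higher multiplicities only help). The upshot is that the excess $E_\epsilon-2\pi\mathbf{M}(\Sigma)$ is nonnegative up to a core-scale error.

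Next I would use the constructions of Theorems \ref{conclusion1}--\ref{conclusion2} to show that our solutions essentially saturate this bound: their vorticity currents are exactly $[\Pi_1]+[\Pi_2]$, respectively $[\Gamma]$, and the asymptotic expansions already produced by the Lyapunov--Schmidt reduction give
\begin{equation*}
E_\epsilon(A_\epsilon,\psi_\epsilon;B_R)\ =\ 2\pi\,\mathcal H^{\,n-2}\big(\Gamma_\epsilon\cap B_R\big)\ +\ o\big(\mathcal H^{\,n-2}(\Gamma_\epsilon\cap B_R)\big)
\end{equation*}
as $R\to\infty$, since away from the zero set $(A_\epsilon,\psi_\epsilon)$ is exponentially close, in every normal slice, to the standard $\pm1$-vortex profile, while the tangential and curvature contributions are lower order. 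Combining the two displays with the fact that $\Pi_1\cup\Pi_2$ and $\Gamma$ are area-\emph{minimizing} among currents with the same trace on $\partial B_R$ — they are calibrated, being holomorphic curves (special Lagrangian planes) for a suitable compatible structure on $\mathbb R^4\cong\mathbb C^2$ — one deduces that any competitor agreeing with our solution near $\partial B_R$ has energy $\ge E_\epsilon(A_\epsilon,\psi_\epsilon;B_R)-o(R^{n-2})$, i.e. asymptotic local minimality, which is the precise sense in which the conjecture should first be attacked.

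For the self-dual case $\lambda=1$ there is a cleaner, potentially \emph{exact} route via the Bogomolny decomposition. Fixing a compatible complex structure $J$ with Kähler form $\omega$ on $\mathbb R^4$, one has, schematically,
\begin{equation*}
E(A,\psi)\ =\ \int_{\mathbb R^4}\big|\bar\partial_A\psi\big|^2+\Big|\,\Lambda F_A+\tfrac14(|\psi|^2-1)\Big|^2\ +\ 2\pi\,\langle\Sigma,\omega\rangle,
\end{equation*}
so $E\ge 2\pi\langle\Sigma,\omega\rangle$ with equality precisely for solutions of the first-order vortex equations, whose vorticity current is then $\omega$-calibrated, hence area-minimizing in its asymptotic class; this is the structure exploited in \cite{Brendle}. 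The plan here would be to verify that the solutions produced by the reduction (at least the family of Theorem \ref{conclusion2} for $\lambda=1$) solve, or can be deformed within their energy level to solutions of, the first-order system, and that $[\Gamma]$ realizes the least value of $\langle\Sigma,\omega\rangle$ among admissible currents — then minimality is exact, not merely leading order.

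The main obstacle, and the reason this remains a conjecture, is the passage from ``energy equals $2\pi$ times a minimal area to leading order'' to ``energy is exactly minimal'': one must exclude that some competitor beats our solution through a more efficient treatment of the core energy, the vortex self-interaction, or the curvature of $\Gamma$, and for $\lambda\neq1$ there is no Bogomolny structure to force equality, so the sharp constant in the $\liminf$ inequality and the accompanying clearing-out and monotonicity estimates must be pushed to $o(1)$ precision uniformly across all scales. A complementary, perhaps more tractable line is to prove \emph{stability} of the constructed solutions — the linearized operator is already analyzed in the reduction, so one needs the quadratic form to be nonnegative — and then upgrade stability to minimality by a calibration-by-foliation argument: the family of Theorem \ref{conclusion2}, together with its translates and the limiting pair associated with $\Pi_1\cup\Pi_2$, should foliate a neighborhood in configuration space, and a field of such solutions would calibrate $E_\epsilon$, exactly as the family of Allen--Cahn solutions asymptotic to the Simons cone is used to establish minimality in \cite{Liu,LiuWang,Savin}.
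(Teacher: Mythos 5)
The statement you were asked to prove is recorded in the paper as a \emph{conjecture}: the authors give no proof of it, and your text is likewise a research programme rather than a proof (you concede as much in your penultimate paragraph). So there is no argument in the paper to compare yours against, and the only relevant question is whether your outline closes the problem. It does not, and two of its steps would fail as written. First, the proposed lower bound $E_\epsilon(A,\psi)\ge 2\pi\,\mathbf{M}(\Sigma)-C\epsilon(\cdots)$ with a $\lambda$-independent constant is specific to the ungauged Ginzburg--Landau functional (where the logarithmically divergent core energy is universal) and to the self-dual magnetic case $\lambda=1$ (where the Bogomolny identity gives exactly $\pi$ per unit vorticity, which is the setting of the Pigati--Stern result cited in the introduction). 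For the magnetic functional with general $\lambda$ the core energy is finite and $\lambda$-dependent, so the constant must be the degree-one vortex energy $E_1(\lambda)$; worse, for $\lambda<1$ (type I) the degree-$d$ radial vortex satisfies $E_d<d\,E_1$, so your claim that ``higher multiplicities only help'' is false and the slicing lower bound with $E_1(\lambda)$ per unit of $|d|$ simply does not hold. Second, even granting matching upper and lower bounds, they agree only to leading order in $\epsilon$ (respectively in $R$), which yields at best asymptotic local minimality; the conjecture asserts exact minimality, and the discrepancy lives precisely in the core, interaction and curvature terms that your estimates discard.

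Your two suggested upgrades identify the right circle of ideas but each requires an ingredient that is not available. The Bogomolny route at $\lambda=1$ gives exact minimality only for solutions of the \emph{first-order} vortex equations; the solutions of Theorems \ref{conclusion1} and \ref{conclusion2} are produced by a second-order variational argument and a second-order Lyapunov--Schmidt reduction, are constructed for all $\lambda>0$, and are not shown to be self-dual even at $\lambda=1$, while deforming them ``within their energy level'' to self-dual solutions is unjustified. The calibration-by-foliation argument requires an actual foliation of a neighborhood in configuration space by solutions, analogous to the foliation by Allen--Cahn solutions near the Simons cone; constructing such a family here is an open problem of at least the same order of difficulty as the conjecture itself, and stability of the constructed solutions (nonnegativity of the second variation of $E$ at them, as opposed to the nondegeneracy of the planar $\pm1$-vortex used in the linear theory) is established neither in the paper nor in your outline. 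In short: the statement is open, the paper states it as such, and your proposal, while it correctly assembles the standard machinery (Jacobian-type lower bounds, area-minimality of $\Pi_1\cup\Pi_2$ and $\Gamma$ via Lawlor's angle criterion and the calibrated structure, the self-dual decomposition), does not constitute a proof.
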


This paper is organized as follows. In Section 2, we show the existence of saddle type solution and proved Theorem 1.1. Sections 3-8 will be devoted to the proof of Theorem 1.2. Section 3 studies the Fermi coordinate with respect to the minimal submanifold.  Section 4 deals with the Jacobi operator. In Section 5 we define the approximate solution and estimate its error. Section 6 is devoted to the linear theory of the linearized operator around the approximate solution. In Section 7, we study a projected nonlinear problem. Then in Section 8 we show the existence of true solutions and finish the proof of Theorem 1.2.  We put some of the tedious computations in Appendix.

\textbf{Acknowledgement } Y. Liu is partially supported by \textquotedblleft
The Fundamental Research Funds for the Central Universities
WK3470000014,\textquotedblright\ and NSFC no. 11971026. J. Wei is partially supported by NSERC of Canada.

\section{A saddle type solution whose zero set are two planes }
In this section, we show the existence of saddle type solution and prove Theorem \ref{conclusion1}. We seek for a solution to the magnetic Ginzburg Landau equations \eqref{Ginzburg Landau equations} with the form:
\begin{equation*}
	\begin{aligned}
		\psi &= f(r_1, r_2) e^{i(\theta_1 + \theta_2)},\\
		A &= g(r_1, r_2)d\theta_1 + h(r_1, r_2)d \theta_2 \\
		&= -\frac{g\sin\theta_1}{r_1}dx_1 + \frac{g\cos\theta_1}{r_1}dy_1 -\frac{h\sin\theta_2}{r_2}dx_2 + \frac{h\cos\theta_2}{r_2}dy_2,
	\end{aligned}
\end{equation*}
and $	\{ \psi = 0\} = \Pi_1 \cup \Pi_2.$

Let us denote $g_j := \partial_{r_j}g$, $h_j := \partial_{r_j}h$ and $f_j := \partial_{r_j}f$.

\begin{lemma}\label{rewritten equation in section 2}
	If $(A, \psi)$ is the solution to \eqref{Ginzburg Landau equations}, then $(f, g, h)$ satisfies
	\begin{numcases}{}
	\label{equation with respect to f}f_{11} + f_{22} + \frac{f_1}{r_1} + \frac{f_2}{r_2} - \frac{f(1 - g)^2}{ r_1^{2}} -\frac{ f(1 - h)^2}{r_2^{2}} - \frac{\lambda}{2}(f^2 - 1)f = 0,\\
		g_{11} + g_{22}- \frac{g_1}{r_1} + \frac{g_2}{r_2}  + f^2  - f^2g = 0, \\
		h_{11}+h_{22} - \frac{h_2}{r_2} + \frac{h_1}{r_1}  + f^2 - f^2h = 0,
\end{numcases}
with the energy functional
\begin{equation*}
	\begin{aligned}
		E(f, g, h) &= \frac{1}{2}\int_{\mathbb R^4} f_1^2 + f_2^2 + [(f - fg)^2 + g_1^2 + g_2^2]\frac{1}{r_1^2} + [(f - fh)^2 + h_1^2 + h_2^2]\frac{1}{r_2^2} + \frac{\lambda}{4}(1 - f^2)^2\\
		&= 2\pi^2\int_{r_1, r_2 > 0}\{|\nabla f|^2 + [(f - fg)^2 + |\nabla g|^2 ]\frac{1}{r_1^2} + [(f - fh)^2 + |\nabla h|^2 ]\frac{1}{r_2^2} \\
		&+ \frac{\lambda}{4}(1 - f^2)^2\}r_1r_2dr_1dr_2.
	\end{aligned}
\end{equation*}
\end{lemma}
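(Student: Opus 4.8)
This lemma is a bookkeeping computation: one substitutes the ansatz into \eqref{Ginzburg Landau equations} and \eqref{energy functional} and simplifies. The plan is to work in the orthogonal coordinates $(r_1,\theta_1,r_2,\theta_2)$, in which the Euclidean metric on $\mathbb R^4=\Pi_1\times\Pi_2$ reads $dr_1^2+r_1^2\,d\theta_1^2+dr_2^2+r_2^2\,d\theta_2^2$, with volume element $r_1r_2\,dr_1\,d\theta_1\,dr_2\,d\theta_2$. Everything collapses because of two elementary facts: (i) $\theta_1,\theta_2$ are harmonic off the axes, $|\nabla\theta_j|^2=r_j^{-2}$, and $\nabla\theta_1\perp\nabla\theta_2$; (ii) since $f,g,h$ depend only on $(r_1,r_2)$, their gradients are radial, hence orthogonal to $\nabla\theta_1$ and $\nabla\theta_2$. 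A short computation then gives
\[
\nabla_A\psi=e^{i(\theta_1+\theta_2)}\big[\nabla f+if(1-g)\nabla\theta_1+if(1-h)\nabla\theta_2\big],\qquad \operatorname{div}A=0,\qquad |A|^2=\frac{g^2}{r_1^2}+\frac{h^2}{r_2^2}.
\]

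\textbf{The scalar equation for $f$.} I would expand $\Delta_A\psi=\Delta\psi-2iA\cdot\nabla\psi-i(\operatorname{div}A)\psi-|A|^2\psi$. Using (i)--(ii), $\Delta\psi=e^{i(\theta_1+\theta_2)}\big[\Delta f-f(r_1^{-2}+r_2^{-2})\big]$ with $\Delta f=f_{11}+f_{22}+\frac{f_1}{r_1}+\frac{f_2}{r_2}$, while the real part of $A\cdot\nabla\psi$ drops out, leaving $A\cdot\nabla\psi=i\big(\frac{fg}{r_1^2}+\frac{fh}{r_2^2}\big)e^{i(\theta_1+\theta_2)}$. Collecting the coefficient of $e^{i(\theta_1+\theta_2)}$ in $-\Delta_A\psi+\frac{\lambda}{2}(|\psi|^2-1)\psi=0$ and completing the squares $1-2g+g^2=(1-g)^2$, $1-2h+h^2=(1-h)^2$ gives exactly \eqref{equation with respect to f}.

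\textbf{The equations for $g$ and $h$.} Here I would first note $\operatorname{Im}(\nabla_A\psi\cdot\bar\psi)=f^2(1-g)\,d\theta_1+f^2(1-h)\,d\theta_2$ (again the radial part of $\nabla_A\psi$ contributes only a real term). Then $dA=dg\wedge d\theta_1+dh\wedge d\theta_2$, and to compute $d^*dA$ I would use the coordinate formula for the codifferential on $2$-forms: raise both indices of $dA$ with $\operatorname{diag}(1,r_1^2,1,r_2^2)$, apply $\omega^{\mu\nu}\mapsto-\frac{1}{r_1r_2}\partial_\mu(r_1r_2\,\omega^{\mu\nu})$, and lower the free index. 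The $dr_1$ and $dr_2$ components vanish (consistent with $\operatorname{Im}(\nabla_A\psi\cdot\bar\psi)$ having no radial part), and the $d\theta_1$, $d\theta_2$ components turn out to be $-\big(g_{11}+g_{22}-\frac{g_1}{r_1}+\frac{g_2}{r_2}\big)$ and $-\big(h_{11}+h_{22}+\frac{h_1}{r_1}-\frac{h_2}{r_2}\big)$. Matching coefficients in $d^*dA-\operatorname{Im}(\nabla_A\psi\cdot\bar\psi)=0$ produces the remaining two equations of the system. For the energy, I would record $|\nabla_A\psi|^2=f_1^2+f_2^2+\frac{f^2(1-g)^2}{r_1^2}+\frac{f^2(1-h)^2}{r_2^2}$, $|dA|^2=\frac{g_1^2+g_2^2}{r_1^2}+\frac{h_1^2+h_2^2}{r_2^2}$, $(1-|\psi|^2)^2=(1-f^2)^2$, substitute into \eqref{energy functional}, pass to the coordinates $(r_1,\theta_1,r_2,\theta_2)$, and integrate out $\theta_1,\theta_2$ over $[0,2\pi]^2$, which contributes the factor $4\pi^2$ and leaves the stated quarter-plane integral with weight $r_1r_2$.

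\textbf{Main obstacle.} The only place where an error can creep in is the computation of $d^*dA$, because of the non-constant metric factors $r_1^2,r_2^2$ and the sign conventions for $*$ and $d^*$ on $2$-forms in $\mathbb R^4$. A useful cross-check, which I would include, is to substitute the ansatz directly into \eqref{energy functional} to obtain the reduced functional $E(f,g,h)$ and verify that its Euler--Lagrange equations reproduce \eqref{equation with respect to f} and the $g$, $h$ equations; by the principle of symmetric criticality these are then genuinely the reduction of the magnetic Ginzburg--Landau system, and the energy identity of the lemma comes for free.
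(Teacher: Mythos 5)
Your proposal is correct and follows essentially the same route as the paper: direct substitution of the ansatz into the system and the energy, using $d^*A=0$, the orthogonality of $\nabla f,\nabla\theta_1,\nabla\theta_2$, and the identification of the $d\theta_1,d\theta_2$ components of $d^*dA$; your signs and coefficients all agree with equations (5)--(7) and with the stated reduced energy. The only difference is presentational — you carry out the codifferential computation in the product polar coordinates $(r_1,\theta_1,r_2,\theta_2)$, whereas the paper grinds through the four Cartesian components of $d^*dA$ with explicit $\sin\theta_j,\cos\theta_j$ factors — and your symmetric-criticality cross-check via the reduced functional is a sound (and reassuring) addition.
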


\begin{proof}
	
	We compute
	\begin{equation*}
	\begin{aligned}
	\nabla_A\psi &= \nabla\psi - iA\psi \\
	&= \nabla f\cdot e^{i(\theta_1 + \theta_2)} + ie^{i(\theta_1 + \theta_2)}\nabla(\theta_1 + \theta_2)\cdot f - ie^{i(\theta_1 + \theta_2)} f\cdot( g\nabla\theta_1 + h\nabla \theta_2 ).\\
	\end{aligned}
	\end{equation*}
We also have	
	\begin{equation*}
	\begin{aligned}
	dA &= dg\wedge d\theta_1 + dh\wedge d\theta_2\\
	& = (\cos\theta_1\cdot g_1dx_1 + \sin\theta_1\cdot g_1dy_1 + \cos\theta_2\cdot g_2dx_2 + \sin\theta_2\cdot g_2dy_2 )\wedge d\theta_1\\
	& + (\cos\theta_1\cdot h_1dx_1 + \sin\theta_1\cdot h_1dy_1 + \cos\theta_2\cdot h_2dx_2 + \sin\theta_2\cdot h_2dy_2 )\wedge d\theta_2\\
	&= \frac{g_1}{r_1}dx_1\wedge dy_1 + (\frac{g_2\sin\theta_1\cos\theta_2}{r_1} - \frac{h_1\cos\theta_1\sin\theta_2}{r_2})dx_1\wedge dx_2\\
	&+ (\frac{g_2\sin\theta_1\sin\theta_2}{r_1} + \frac{h_1\cos\theta_1\cos\theta_2}{r_2})dx_1\wedge dy_2 + (-\frac{g_2\cos\theta_1\cos\theta_2}{r_1} - \frac{h_1\sin\theta_1\sin\theta_2}{r_2})dy_1\wedge dx_2\\
	& + (-\frac{g_2\cos\theta_1\sin\theta_2}{r_1} + \frac{h_1\sin\theta_1\cos\theta_2}{r_2})dy_1\wedge dy_2 + \frac{h_2}{r_2}dx_2\wedge dy_2.
	\end{aligned}
	\end{equation*}
Moreover,	
	\begin{equation*}
	\begin{aligned}
	&\Delta_A\psi = \Delta\psi + id^*A\cdot\psi - 2i<A, d\psi> - |A|^2\psi,\\
	&e^{-i(\theta_1 + \theta_2)}\Delta\psi = f_{11} + \frac{f_1}{r_1} - \frac{f}{r_1^2} + f_{22} + \frac{f_2}{r_2} - \frac{f}{r_2^2},\\
	&d^*A = \partial_{x_1}(\frac{g\sin\theta_1}{r_1}) - \partial_{y_1}(\frac{g\cos\theta_1}{r_1}) + \partial_{x_2}(\frac{h\sin\theta_2}{r_2}) - \partial_{y_2}(\frac{h\cos\theta_2}{r_2}) = 0,
	\end{aligned}
	\end{equation*}
	
	\begin{equation*}
	e^{-i(\theta_1 + \theta_2)}<A, d\psi> = i f(g r_1^{-2}  + hr_2^{-2}).
	\end{equation*}
It follows that	
	\begin{equation*}
	\begin{aligned}
	&e^{-i(\theta_1 + \theta_2)}\Delta_A\psi =e^{-i(\theta_1 + \theta_2)}( \Delta\psi + id^*A\cdot\psi - 2i<A, d\psi> - |A|^2\psi)\\
	&= f_{11} + \frac{f_1}{r_1} - \frac{f}{r_1^2} + f_{22} + \frac{f_2}{r_2} - \frac{f}{r_2^2} + 2f(gr_1^{-2}  + hr_2^{-2}) - f(g^2r_1^{-2}  + h^2r_2^{-2}).
	\end{aligned}
	\end{equation*}
	Let $d^*dA = (d^*dA)_1dx_1 + (d^*dA)_2dy_1 + (d^*dA)_3dx_2 + (d^*dA)_4dy_2$. Then there holds
	\begin{equation*}
	\begin{aligned}
	(d^*dA)_1 &= \partial_{y_1}(\frac{g_1}{r_1}) + \partial_{x_2}(-\frac{h_1\cos\theta_1\sin\theta_2}{r_2} + \frac{g_2\sin\theta_1\cos\theta_2}{r_1})\\
	&+ \partial_{y_2}(\frac{h_1\cos\theta_1\cos\theta_2}{r_2} + \frac{g_2\sin\theta_1\sin\theta_2}{r_1})\\
	&= (\frac{g_{11}}{r_1} - \frac{g_1}{r_1^2} + \frac{g_2}{r_1r_2} + \frac{g_{22}}{r_1})\sin\theta_1,
	\end{aligned}
	\end{equation*}

	\begin{equation*}
	\begin{aligned}
	(d^*dA)_2 &= -\partial_{x_1}(\frac{g_1}{r_1}) + \partial_{x_2}(-\frac{h_1\sin\theta_1\sin\theta_2}{r_2} - \frac{g_2\cos\theta_1\cos\theta_2}{r_1})\\
	&+ \partial_{y_2}(\frac{h_1\sin\theta_1\cos\theta_2}{r_2} - \frac{g_2\cos\theta_1\sin\theta_2}{r_1})\\
	& = -(\frac{g_{11}}{r_1} - \frac{g_1}{r_1^2} + \frac{g_2}{r_1r_2} + \frac{g_{22}}{r_1})\cos\theta_1,
	\end{aligned}
	\end{equation*}
	
	\begin{equation*}
	\begin{aligned}
	(d^*dA)_3 &= -\partial_{x_1}(-\frac{h_1\cos\theta_1\sin\theta_2}{r_2} + \frac{g_2\sin\theta_1\cos\theta_2}{r_1}) \\
	&- \partial_{y_1}(-\frac{h_1\sin\theta_1\sin\theta_2}{r_2} - \frac{g_2\cos\theta_1\cos\theta_2}{r_1}) + \partial_{y_2}(\frac{h_2}{r_2})\\
	&= (\frac{h_{22}}{r_2} - \frac{h_2}{r_2^2} + \frac{h_1}{r_1r_2} + \frac{h_{11}}{r_2})\sin\theta_2,
	\end{aligned}
	\end{equation*}
	
	\begin{equation*}
	\begin{aligned}
	(d^*dA)_4 &= -\partial_{x_1}(\frac{h_1\cos\theta_1\cos\theta_2}{r_2} + \frac{g_2\sin\theta_1\sin\theta_2}{r_1})\\
	&- \partial_{y_1}(\frac{h_1\sin\theta_1\cos\theta_2}{r_2} - \frac{g_2\cos\theta_1\sin\theta_2}{r_1}) - \partial_{x_2}(\frac{h_2}{r_2})\\
	&= -(\frac{h_{22}}{r_2} - \frac{h_2}{r_2^2} + \frac{h_1}{r_1r_2} + \frac{h_{11}}{r_2})\cos\theta_2.
	\end{aligned}
	\end{equation*}
Besides, we have:
	\begin{equation*}
	\begin{aligned}
	&Im(\nabla_A\psi\cdot\bar\psi) = (f^2 - f^2g)d\theta_1 + (f^2 - f^2h)d\theta_2\\
	&= (f^2 - f^2g)(-\frac{\sin\theta_1}{r_1}dx_1 + \frac{\cos\theta_1}{r_1}dy_1) + (f^2 - f^2h)(-\frac{\sin\theta_2}{r_2}dx_2 + \frac{\cos\theta_2}{r_2}dy_2).
	\end{aligned}
	\end{equation*}
Therefore, the energy functional for $f$ and $g,h$ has the form
	\begin{equation*}
	\begin{aligned}
	E(f, g, h) &= \frac{1}{2}\int_{\mathbb R^4} f_1^2 + f_2^2 + [(f - fg)^2 + g_1^2 + g_2^2]\frac{1}{r_1^2} + [(f - fh)^2 + h_1^2 + h_2^2]\frac{1}{r_2^2} + \frac{\lambda}{4}(1 - f^2)^2\\
	&= 2\pi^2\int_{r_1, r_2 > 0}\{|\nabla f|^2 + [(f - fg)^2 + |\nabla g|^2 ]\frac{1}{r_1^2} + [(f - fh)^2 + |\nabla h|^2 ]\frac{1}{r_2^2} \\
	&+ \frac{\lambda}{4}(1 - f^2)^2\}r_1r_2dr_1dr_2.
	\end{aligned}
	\end{equation*}
Then the equations \eqref{Ginzburg Landau equations} with respect to $f$ and $g,h$ in the form
	\begin{numcases}{}
	\label{equation with respect to f}f_{11} + f_{22} + \frac{f_1}{r_1} + \frac{f_2}{r_2} - \frac{f(1 - g)^2}{r_1^2} - \frac{f(1 - h)^2}{r_2^2} - \frac{\lambda}{2}(f^2 - 1)f = 0,\\
	g_{11} - \frac{g_1}{r_1} + \frac{g_2}{r_2} + g_{22} + f^2  - f^2g = 0, \\
	h_{22} - \frac{h_2}{r_2} + \frac{h_1}{r_1} + h_{11} + f^2 - f^2h = 0.
	\end{numcases}
	
\end{proof}

\begin{theorem}
	There exists a triple $(f, g, h)$, such that the equations in Lemma \ref{rewritten equation in section 2} hold, and satisfies the boundary condition:
	\begin{equation*}
		f =0 , \text{ on } \Pi_1 \cup \Pi_2.
	\end{equation*}
Moreover, $g(r_1,r_2)=h(r_2,r_1)$ and $g$ vanishes at the $r_2$ axis.
\end{theorem}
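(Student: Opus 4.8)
The plan is to realize $(f,g,h)$ as a solution of the reduced system of Lemma~\ref{rewritten equation in section 2} lying between an explicit pair of ordered sub- and super-solutions, produced by a monotone iteration; this scheme delivers all the sign and vanishing properties in the statement simultaneously. Write the three equations of Lemma~\ref{rewritten equation in section 2} as $F_1(f,g,h)=F_2(f,g,h)=F_3(f,g,h)=0$. On the cube $\{0\le f\le 1,\ 0\le g\le 1,\ 0\le h\le 1\}$ the system is cooperative: $\partial_g F_1=2f(1-g)r_1^{-2}\ge 0$, $\partial_h F_1=2f(1-h)r_2^{-2}\ge 0$, $\partial_f F_2=2f(1-g)\ge 0$, $\partial_f F_3=2f(1-h)\ge 0$, while $F_2$ does not involve $h$ and $F_3$ does not involve $g$; hence the comparison principle and the monotone iteration are available as long as the iterates stay in $[0,1]^3$. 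The coefficients $r_j^{-1},r_j^{-2}$ are singular on the axes $\{r_1=0\}\cup\{r_2=0\}=\Pi_1\cup\Pi_2$, but this singularity is only apparent: lifting $(f,g,h)$ to $(\psi,A)=(f e^{i(\theta_1+\theta_2)},\,g\,d\theta_1+h\,d\theta_2)$ turns the system into the magnetic Ginzburg--Landau system \eqref{Ginzburg Landau equations} on $\mathbb R^4$, which (after fixing a gauge) is uniformly elliptic across $\Pi_1\cup\Pi_2$; the comparison principle and interior Schauder estimates used below are invoked in that formulation in a neighbourhood of the axes.

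A super-solution is the constant triple $(\bar f,\bar g,\bar h)\equiv (1,1,1)$, since $F_1(1,g,h)=-(1-g)^2r_1^{-2}-(1-h)^2r_2^{-2}\le 0$ and $F_2(f,1,h)=F_3(f,g,1)=0$. For the sub-solution I take $(\underline f,\underline g,\underline h)=(\varepsilon\,\phi(r_1)\phi(r_2),\,0,\,0)$: the choices $\underline g=\underline h=0$ work because $F_2(f,0,h)=f^2\ge 0$, and $\phi$ is built from a fixed smooth profile $f_0:(0,\infty)\to(0,1]$ with $f_0(s)=s$ near $s=0$ and $f_0(s)\to 1$ as $s\to\infty$, by setting $\phi(r)=a f_0(r/a)$. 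Writing $Lu:=u''+u'/r-u/r^2$ (the radial part of the angular-mode-one Laplacian), one has $C_0:=\sup_{s>0}|Lf_0/f_0|<\infty$, and scaling gives $L\phi/\phi=a^{-2}(Lf_0/f_0)(\cdot/a)$, so $|L\phi/\phi|\le C_0 a^{-2}$ and $0\le \phi\le a$. Using $(1-\underline g)^2\le 1$ to drop the two magnetic terms of $F_1$ in the favourable direction, a direct computation gives
\[
F_1(\underline f,0,0)=\varepsilon\,\phi(r_1)\phi(r_2)\left[\frac{L\phi(r_1)}{\phi(r_1)}+\frac{L\phi(r_2)}{\phi(r_2)}+\frac{\lambda}{2}\bigl(1-\varepsilon^2\phi(r_1)^2\phi(r_2)^2\bigr)\right],
\]
and the bracket is $\ge -2C_0 a^{-2}+\tfrac{\lambda}{2}(1-\varepsilon^2 a^4)\ge 0$ as soon as $a^2\ge 8C_0/\lambda$ and $\varepsilon a^2\le \tfrac12$. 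For such a choice $0\le \underline f\le \varepsilon a^2\le \tfrac12<1=\bar f$, so the two triples are ordered; moreover $\underline f>0$ in the open quadrant and $\underline f=0$ on both axes.

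I then run the iteration on an exhaustion $\Omega_R=\{r_1,r_2>0,\ r_1^2+r_2^2<R^2\}$ of the quarter plane, with Dirichlet data $f=g=h=1$ on the outer arc, $f=0$ on the two axes, $g=0$ on $\{r_1=0\}$ and $h=0$ on $\{r_2=0\}$ — data that are compatible with the ordering $(\underline f,\underline g,\underline h)\le (\bar f,\bar g,\bar h)$ on $\partial\Omega_R$. Beginning from $(1,1,1)$ and solving at each step the linear elliptic problems of the cooperative iteration, the comparison principle makes the iterates decrease and stay above $(\underline f,0,0)$, and uniform elliptic estimates give $C^{2,\alpha}_{\mathrm{loc}}$ convergence to a solution $(f_R,g_R,h_R)$ on $\Omega_R$ with $\underline f\le f_R\le 1$, $0\le g_R,h_R\le 1$, the prescribed boundary values, and — since the sub-/super-solutions, the equations and the boundary conditions are all invariant under the involution swapping $r_1\leftrightarrow r_2$ and $g\leftrightarrow h$ — with $f_R(r_1,r_2)=f_R(r_2,r_1)$ and $g_R(r_1,r_2)=h_R(r_2,r_1)$. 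Letting $R\to\infty$, the uniform bounds $0\le f_R,g_R,h_R\le 1$ together with interior elliptic estimates (again via the $\mathbb R^4$ picture near $\Pi_1\cup\Pi_2$) yield $C^{2,\alpha}_{\mathrm{loc}}$ subconvergence to $(f,g,h)$ solving the system of Lemma~\ref{rewritten equation in section 2} on $\{r_1,r_2>0\}$, with $\underline f\le f\le 1$ — in particular $f>0$ in the interior, so the solution is nontrivial — $f=0$ on $\Pi_1\cup\Pi_2$, $g=0$ on the $r_2$-axis, and $g(r_1,r_2)=h(r_2,r_1)$. Unwinding the reduction, $(\psi,A)=(f e^{i(\theta_1+\theta_2)},\,g\,d\theta_1+h\,d\theta_2)$ solves \eqref{Ginzburg Landau equations} with $\{\psi=0\}=\Pi_1\cup\Pi_2$, and is smooth (including across the intersecting planes) by elliptic regularity in the $\mathbb R^4$ variables. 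One may further obtain the exponential convergence $f,g,h\to 1$ at infinity by comparison with one-dimensional barriers, although this is not required for the statement.

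The step I expect to be the main obstacle is producing a positive sub-solution that works for \emph{all} $\lambda>0$: the obvious candidate $U_1(r_1)U_1(r_2)$ assembled from the planar vortex profile turns out to be a super-solution rather than a sub-solution, which forces the slowly varying angular-mode-one profile $\phi$ above and the joint choice of the scale $a$ and the small parameter $\varepsilon$. (A purely variational construction — minimizing $E(f,g,h)$ over bounded domains and passing to the limit — would face precisely a nontriviality issue for the limiting configuration, which the sub-solution $\underline f$ bypasses.) Closely tied to this is the need to run the comparison principle and the Schauder theory uniformly up to and across the singular axes, and in particular at the origin where $\Pi_1$ and $\Pi_2$ meet; this is what the systematic passage to the $\mathbb R^4$ formulation there is for.
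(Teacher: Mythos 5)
Your argument is essentially correct, but it takes a genuinely different route from the paper's. The paper proves this theorem variationally: it minimizes the reduced energy $E_k$ on the balls $\Omega_k$ over the symmetric class $H_k$ (oddness of $f$ under $r_1\mapsto -r_1$ and the relation $g(r_1,r_2)=h(r_2,r_1)$ are built into the space, which is how the boundary condition and the symmetry in the statement are enforced), extracts a limit using the uniform bound $E_k\le C_1$, recovers smoothness of the lifted pair $(\psi,A)$ across $\Pi_1\cup\Pi_2$ from $W^{2,2}(\mathbb R^4)$ estimates on the right-hand sides, and finally invokes the minimum principle to get $f>0$ off the planes. You instead exploit the cooperative structure of the reduced system and run a monotone iteration between the super-solution $(1,1,1)$ and the explicit ordered sub-solution $(\varepsilon\,\phi(r_1)\phi(r_2),0,0)$; your verification that this is a sub-solution for every $\lambda>0$, via the scaling $\phi=a f_0(\cdot/a)$ and the joint choice of $a$ and $\varepsilon$, checks out. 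What your approach buys is an explicit positive lower barrier, hence nontriviality of the limit and the identification $\{f=0\}=\Pi_1\cup\Pi_2$ for free — precisely the point that the paper's variational limit leaves implicit when it appeals to the minimum principle. What the paper's approach buys is that the solution arises as a limit of constrained minimizers (relevant to its Conjecture 1.1 that these solutions are energy minimizers) and that the uniform energy bound feeds directly into the $W^{2,2}$ estimates used to remove the singularity on the planes. The place where you should tighten the argument is the same place both proofs are delicate, namely the axes: at each linear step you should say what problem $g$ solves on $\{r_2=0\}$ and $h$ on $\{r_1=0\}$ (the natural resolution is that $g_{22}+g_2/r_2$ is the radial two-dimensional Laplacian in $(x_2,y_2)$, so $\{r_2=0\}$ is an interior point of the lifted problem and carries no boundary condition), and at the end you need the limit to be a distributional solution of \eqref{Ginzburg Landau equations} across the codimension-two planes before elliptic regularity can be invoked; with only $L^\infty$ bounds this requires either Caccioppoli-type local energy estimates for the lifted iterates up to the planes or an explicit removable-singularity (zero capacity) argument, which you should state rather than subsume under "elliptic regularity in the $\mathbb R^4$ variables."
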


\begin{proof}
	We minimize the energy functional.
	
Although $r_1,r_2$ are polar coordinates and they are nonnegative, for convenience, during the proof we shall also consider the negative $r_1$ region by reflection. Let us consider the bounded region: $\Omega_k := \{(r_1, r_2):r_1 \in \mathbb{R}, r_2 > 0\} \cap B_k$, where $B_k$ is the ball of radius $k$ in the $r_1$-$r_2$ plane, centered at the origin.

Consider the corresponding energy functional
	
	\begin{equation*}
		\begin{aligned}
			&E_k(f, g, h) := \int_{\Omega_k}\{|\nabla f|^2 + [(f - fg)^2 + |\nabla g|^2 ]\frac{1}{r_1^2} + [(f - fh)^2 + |\nabla h|^2 ]\frac{1}{r_2^2} \\
		&+ \frac{\lambda}{4}(1 - f^2)^2\}r_1r_2dr_1dr_2.
		\end{aligned}
	\end{equation*}
Define the space\begin{align*}
H_{k}  & :=\{(f,g,h):f,g,h\in \tilde{H}_{0}^{1}(\Omega_{k});g(r_{1},r_{2}%
)=h(r_{2},r_{1}),\text{ }\\
&f(r_{1},r_{2})   =-f(-r_{1},r_{2}),g(r_{1},r_{2})=g(-r_{1},r_{2}),a.e.\}.
\end{align*}Here $\tilde{H}_{0}^{1}(\Omega_{k})$ is obtained by replacing the weight by $r_1r_2$ in the usual $H_{0}^{1}(\Omega_{k})$ space.  Let us consider the variational problem:
	\begin{equation*}
		\inf_{(f, g, h) \in H_k} E_k(f, g, h).
	\end{equation*}
The existence of a minimizer of  the action functional $E_k$ in this space follows from a straight forward modification of standard argument, by using the fact that the functional $E_k$ is coercive and satisfies the P.S. condition.

We can assume that the minimizer satisfies $0 \le f_k, g_k ,h_k\le 1$.  So using the interior estimates, we know $(f_k, g_k, h_k)$ converges to some $(f, g, h)$ in $H^1_{ loc}(\{r_1, r_2 > 0\})$. Hence $(f, g, h)$ is smooth in $\mathbb R^4 \backslash (\Pi_1 \cup\Pi_2)$.
	
	Next, we need to show the $(\psi, A)$ corresponding to $(f, g, h)$ is smooth in $\mathbb R^4$. In fact, let us first consider $(\psi_k, A_k)$ corresponding to $(f_k, g_k, h_k)$ and rewrite \eqref{Ginzburg Landau equations} as
	\begin{equation*}
		\begin{cases}
			-\Delta \psi = \frac{\lambda}{2}(1 - |\psi|^2)\psi + 2\psi(gr_1^{-2} + hr_2^{-2}) - \psi(g^2r_1^{-2} + h^2r_2^{-2}),\\
			-\Delta A= (f^2 - f^2g)d\theta_1 + (f^2 - f^2h)d\theta_2.
		\end{cases}
	\end{equation*}

	Note that $(f_k, g_k, h_k) \in H_{k+1}$ and $E_{k+1}(f_k, g_k, h_k) \ge E_{k+1}(f_{k+1}, g_{k+1}, h_{k+1})$, so we always have:
	\begin{equation*}
		E_k(f, g ,h) \le C_1,
	\end{equation*}
	where $C_1$ is a constant.
	
	Thus we know
	\begin{equation*}
		\begin{aligned}
			\|(f_k^2 - f_k^2g_k)d\theta_1\|_{L^2(\mathbb R^4)} \le \|(f_k - f_kg_k)r_1^{-1}\|_{L^2(\mathbb R^4)} \le \sqrt{C_1}.
		\end{aligned}
	\end{equation*}
	Then $\|A_k \|_{ W^{2, 2}(\mathbb R^4)} \le C_2$.
	
	Similarly, we can also get that
	\begin{equation*}
		\|\frac{\lambda}{2}(1 - |\psi_k|^2)\psi_k + 2\psi_k(gr_1^{-2} + hr_2^{-2}) - \psi_k(g^2r_1^{-2} + h^2r_2^{-2})\|_{L^2(\mathbb R^4)} \le C_3.
	\end{equation*}		
	Then $\|\psi_k \|_{ W^{2, 2}(\mathbb R^4)} \le C_4$.
	
	Therefore, $(A_k, \psi_k)$ converges to some $(A, \psi)$ in $W^{1,2}(\mathbb R^4)$ norm. Hence $(A, \psi)$ is one solution to \eqref{Ginzburg Landau equations} and it is smooth.
	By minimum principle, we know in $\mathbb R^4\backslash(\Pi_1 \cup \Pi_2)$, $f > 0$.
 \end{proof}

\section {Fermi coordinates}
Instead of working in the usual Euclidean coordinate, we shall work in the coordinate adapted to the minimal submanifold, which we call Fermi coordinate here. Let us recall the minimal submanifold $\Gamma $ studied in \cite{Arezzo}:
$\Gamma  =   \left( \frac{\sqrt2}{2}\rho e^{i\theta }, \frac{\sqrt2}{2}\rho^{-1} e^{i\theta }   \right)$.

We rewrite the minimal submanifold $\Gamma$ as:
\begin{equation*}
	\Gamma = \frac{e^{is}}{\sqrt{\sin2s}}\Theta,\\
\end{equation*}
where $s \in (0, \frac{\pi}{2})$, $\Theta = (\cos\theta, \sin\theta)\in S^1$.
The rescaled manifold $\Gamma_\epsilon$ is given by
\begin{equation*}
	 (\frac{\sqrt2}{2\epsilon}\rho e^{i\theta}, \frac{\sqrt2}{2\epsilon}\rho^{-1}e^{i\theta}),
\end{equation*}
The tangent vector of $\Gamma$ can be written in the form

\begin{equation*}
		\partial_s \Gamma = \frac{-e^{-is}}{(\sin2s)^{\frac32}}\Theta, \text{ }
		\partial_\theta\Gamma = \frac{e^{is}}{\sqrt{\sin2s}}\Theta^{\perp},
\end{equation*}
where $\Theta^{\perp} = (-\sin\theta, \cos\theta)$. We also have the
unit tangent vector $e_1 = e^{-is}\Theta$, $e_2 = e^{is}\Theta^\perp$, and unit normal vector $m = ie^{-is}\Theta$, $n = ie^{is}\Theta^\perp$.

Define a map $T: (s, \theta, a, b) \longrightarrow (z_1, z_2) \in \mathbb C^2$:
\begin{equation*}
	\begin{aligned}
		Y &= \Gamma_\epsilon + am + bn\\
		&= \left( \frac{\cos s}{\epsilon\sqrt{\sin2s}}\Theta + a\sin s\cdot\Theta - b\sin s\cdot\Theta^\perp, \frac{\sin s}{\epsilon\sqrt{\sin2s}}\Theta + a\cos s\cdot\Theta + b\cos s\cdot\Theta^\perp\right).
	\end{aligned}
\end{equation*}
Denote $\Sigma_\epsilon : = \{(s, \theta, a, b): a^2 + b^2 < \frac{1}{\epsilon^2\sin2s}\}$.
We would like to show that if $(s,\theta, a, b) \in \Sigma_\epsilon$, then $Y = \Gamma_\epsilon + am + bn$ defined a smooth coordinate system around the minimal submanifold.   To see this,  we need several lemmas.
\begin{lemma}\label{lemma1}
Let $P = \left(\rho _1 e^{i\theta _1}, \rho _2 e^{i\theta _2}\right
)$, $ \rho_1 \neq \rho_2$. If there are two points $P_1$, $P_2 \in \Gamma$ satisfying    $dist(P_j,P) = dist(\Gamma_\epsilon, P)$, then $P_1 = P_2$.

\end{lemma}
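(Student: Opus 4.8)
The plan is to reduce the problem to a one–variable minimization and then bring out a hidden convexity. Write $c=\tfrac{\sqrt2}{2\epsilon}$ and parametrize $\Gamma_\epsilon$ by $\zeta=re^{i\phi}\in\mathbb C^{*}$ via $Q(\zeta)=(c\zeta,\,c\bar\zeta^{-1})$; since $|c\zeta|=cr$, $|c\bar\zeta^{-1}|=c/r$, and both coordinates of $Q(\zeta)$ have argument $\phi$, this is precisely the rescaling of the parametrization $\tfrac{e^{is}}{\sqrt{\sin2s}}\Theta$. For $P=(\rho_1e^{i\theta_1},\rho_2e^{i\theta_2})$ a direct expansion gives
\[
|P-Q(\zeta)|^{2}=\rho_1^{2}+\rho_2^{2}+c^{2}\bigl(r^{2}+r^{-2}\bigr)-2c\bigl[\rho_1 r\cos(\theta_1-\phi)+\rho_2 r^{-1}\cos(\theta_2-\phi)\bigr].
\]
For fixed $r$ the bracket equals $\mathrm{Re}\bigl[(\rho_1 re^{i\theta_1}+\rho_2 r^{-1}e^{i\theta_2})e^{-i\phi}\bigr]$, whose maximum over $\phi$ is $m(r):=\sqrt{\rho_1^{2}r^{2}+\rho_2^{2}r^{-2}+2\rho_1\rho_2\cos(\theta_1-\theta_2)}$, attained at a \emph{unique} $\phi$ unless $\rho_1 re^{i\theta_1}+\rho_2 r^{-1}e^{i\theta_2}=0$, which forces $\rho_1 r=\rho_2 r^{-1}$. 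Hence, setting $\Xi(r):=c^{2}(r^{2}+r^{-2})-2c\,m(r)$, we get $\mathrm{dist}(P,\Gamma_\epsilon)^{2}=\rho_1^{2}+\rho_2^{2}+\inf_{r>0}\Xi(r)$, and it suffices to show that $\Xi$ has a unique minimizer $r^{*}>0$ with $\rho_1 r^{*}\neq\rho_2(r^{*})^{-1}$.

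The subtlety is that $\Xi$ need not be convex in $r$ (it may have three critical points), so the plan is to work on one half at a time. The map $(z_1,z_2)\mapsto(z_2,z_1)$ is an isometry of $\mathbb R^{4}$ preserving $\Gamma_\epsilon$ and acts by $r\mapsto r^{-1}$, $(\rho_1,\rho_2)\mapsto(\rho_2,\rho_1)$, so one may assume $\rho_1>\rho_2\geq0$; the case $\rho_1=\rho_2$ is excluded by hypothesis. Put $u=r^{2}+r^{-2}\in[2,\infty)$. On the branch $r\geq1$ one has $\sqrt{u^{2}-4}=r^{2}-r^{-2}$, so $\rho_1^{2}r^{2}+\rho_2^{2}r^{-2}=\tfrac12(\rho_1^{2}+\rho_2^{2})u+\tfrac12(\rho_1^{2}-\rho_2^{2})\sqrt{u^{2}-4}$, and there $\Xi$ equals $\Xi_{+}(u):=c^{2}u-2c\sqrt{A(u)}$ with $A(u)=\tfrac12(\rho_1^{2}+\rho_2^{2})u+\tfrac12(\rho_1^{2}-\rho_2^{2})\sqrt{u^{2}-4}+2\rho_1\rho_2\cos(\theta_1-\theta_2)$. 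One computes $A''(u)=-2(\rho_1^{2}-\rho_2^{2})(u^{2}-4)^{-3/2}<0$, while $A(u)=\rho_1^{2}r^{2}+\rho_2^{2}r^{-2}+2\rho_1\rho_2\cos(\theta_1-\theta_2)\geq(\rho_1 r-\rho_2 r^{-1})^{2}>0$ on $r\geq1$ (strictly, since $\rho_1 r\geq\rho_1>\rho_2\geq\rho_2 r^{-1}$). From $\Xi_{+}''=-\tfrac{c}{2}\,(2A''A-(A')^{2})/A^{3/2}$ together with $A>0$, $A''<0$, we conclude $\Xi_{+}''>0$: $\Xi_{+}$ is strictly convex on $(2,\infty)$.

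To finish, note that on the branch $r\leq1$ one has $\sqrt{u^{2}-4}=r^{-2}-r^{2}$, so there $\Xi$ equals $\Xi_{-}(u):=c^{2}u-2c\sqrt{A(u)-(\rho_1^{2}-\rho_2^{2})\sqrt{u^{2}-4}}$; since $\rho_1>\rho_2$, $\Xi_{+}(u)\leq\Xi_{-}(u)$ for $u\geq2$ with equality only at $u=2$. Thus $\inf_{r>0}\Xi=\inf_{u\geq2}\Xi_{+}(u)$; this is attained (as $\Xi$ is coercive on $(0,\infty)$) at a unique point $u^{*}$ by strict convexity, and $u^{*}>2$ because $r=1$ is not a critical point of $\Xi$ (indeed $\Xi'(1)=-2c(\rho_1^{2}-\rho_2^{2})/m(1)\neq0$). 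Hence $u^{*}$ corresponds to a unique $r^{*}>1$, and no point with $r\leq1$ can achieve the infimum, since there $\Xi=\Xi_{-}(u)>\Xi_{+}(u^{*})$. Finally $r^{*}>1>\sqrt{\rho_2/\rho_1}$ gives $\rho_1 r^{*}>\rho_2(r^{*})^{-1}$, so the optimal $\phi$ is unique as well; consequently the nearest point of $\Gamma_\epsilon$ to $P$ is unique, i.e.\ $P_1=P_2$. The one genuinely nontrivial move is the change of variables $r\mapsto u=r^{2}+r^{-2}$ on the branch $r\geq1$: this is exactly what trades the (possibly three–critical–point) function $\Xi$ for the strictly convex $\Xi_{+}$. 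Everything else is the isometry reduction to $\rho_1>\rho_2$ and elementary calculus.
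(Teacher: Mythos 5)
Your proof is correct, and while it shares the paper's overall reduction, the decisive step is carried out by a genuinely different mechanism. Like the paper, you first minimize over the angle to reduce to a one-variable problem in $r$, use the symmetry $r\mapsto r^{-1}$ (your $\Xi_-\ge\Xi_+$ is exactly the paper's observation $f(\rho)\le f(\rho^{-1})$ for $\rho\ge1$), and rule out $r=1$ by computing the derivative there. Where you diverge is in proving uniqueness of the minimizer on $(1,\infty)$: the paper substitutes the critical-point relation $f'(\rho)=0$ back into $f$ to obtain a function $g(\rho)=\frac{\rho^6-\rho^{-2}-4\rho_1^2\rho^4+4\rho_2^2}{2(\rho^4-1)}$ that is strictly increasing, so two critical points at the same level must coincide; you instead change variables to $u=r^2+r^{-2}$ and show the profile $\Xi_+(u)=c^2u-2c\sqrt{A(u)}$ is strictly convex because $A''<0$ and $A>0$. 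Your route is arguably more structural: convexity yields at once that there is a single critical point (hence a single minimizer) and that it lies in $u>2$, whereas the paper's trick only compares critical points lying at the same value of $f$ and is more ad hoc. You also explicitly verify that the angular maximizer $\phi$ is unique at the optimal radius (by checking $\rho_1 r^*\neq\rho_2(r^*)^{-1}$), a point the paper's proof leaves implicit. Both arguments are elementary calculus of comparable length, so neither buys extra generality, but yours makes the underlying reason for uniqueness more transparent.
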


\begin{proof}
The distance between $P$ and $P_1$ is
	\begin{equation*}
	\begin{aligned}
	d^2 &= \inf _{\rho, \theta} \frac{1}{2} \rho^2 + \frac12\rho^{-2} + \rho_1^2  + \rho_2^2 - \sqrt2\rho _1\rho\cos(\theta - \theta _1) - \sqrt2\rho_2\rho^{-1}\cos(\theta - \theta_2)\\
	&= \inf_\rho \frac12\rho^2 + \frac12\rho^{-2} + \rho_1^2 + \rho_2^2 - \sqrt2\sqrt{\rho_1^2\rho^2 + \rho_2^2\rho^{-2} + 2\rho_1\rho_2\cos(\theta_1 - \theta_2)}.
	\end{aligned}
	\end{equation*}
Set $f(\rho) = \frac12\rho^2 + \frac12\rho^{-2}  - \sqrt2\sqrt{\rho_1^2\rho^2 + \rho_2^2\rho^{-2} + 2\rho_1\rho_2\cos(\theta_1 - \theta_2)} $.
	
	Suppose $\rho_1 > \rho_2$ .
	We observe that if $\rho \geqslant 1$, then
	\begin{equation*}
	f(\rho) \leqslant f(\rho^{-1}),
	\end{equation*}
Moreover, ``=" holds iff $\rho$ = 1. So $\inf\limits_{\rho > 0} f(\rho) = \inf\limits_{\rho \geqslant 1} f(\rho)$.
	
	Suppose there exist $r_1, r_2 \geqslant 1$, satisfying
	\begin{equation*}
	f(r_1) = f(r_2) = \inf_{\rho > 0} f(\rho),
	\end{equation*}
then $f'(r_1) = f'(r_2) = 0$. Let us define $\alpha = \theta_1 - \theta_2$, we have
	\begin{equation*}
	\begin{aligned}
	f'(\rho) &= \rho - \rho^{-3} - \sqrt2\frac{\rho_1^2\rho - \rho_2^2\rho^{-3}} {\sqrt{\rho_1^2\rho^2 + \rho_2^2\rho^{-2} + 2\rho_1\rho_2\cos(\alpha)}},\\
	f'(1) &= \frac{-\sqrt2(\rho_1^2 - \rho_2^2)} {\sqrt{(\rho_1 - \rho_2)^2 + 2\rho_1\rho_2(1 + \cos\alpha)}}\\
	&> \frac{-\sqrt2(\rho_1^2 - \rho_2^2)}{\rho_1 - \rho_2}\\
	&= -\sqrt2(\rho_1 + \rho_2) < 0.
	\end{aligned}
	\end{equation*}
	So $\rho$ = 1 is not an extreme point. As a result, we obtain $r_1 > r_2 >1$.
	If $f'(\rho) = 0$, then
	\begin{equation*}
	\begin{aligned}
	f(\rho) &= \frac12\rho^2 + \frac12\rho^{-2} -2\frac{\rho_1^2\rho - \rho_2^2\rho^{-3}}{\rho - \rho^{-3}}\\
	&=\frac{\rho^6 - \rho^{-2} - 4\rho_1^2\rho^4 + 4\rho_2^2}{2(\rho^4 - 1)} =: g(\rho).
	\end{aligned}
	\end{equation*}
	Since
	\begin{equation*}
	g'(\rho) = \frac{2(\rho^3 - \rho^{-1})^3 + 16\rho^3(\rho_1^2 - \rho_2^2)}{2(\rho^4 - 1)^2} > 0,
	\end{equation*}
	$g$ is strictly increasing.
	If $\ g(\rho_1) = g(\rho_2)$, we have $r_1 = r_2$, which is a contradiction. So when $\rho_1 \neq \rho_2$, the closest point is unique, which completes the proof.
\end{proof}

The proof of the next three lemmas relies on careful analysis of the geomeotric structure around the minimal submanifold. The computation is straight forward but tedious, we put the proof in the Appendix.
\begin{lemma}\label{lemma2}
Suppose $\rho_1 > 0$, then $(\rho_1e^{i\theta_1}, \rho_1e^{i\theta_2})$ has two different closest points to $\Gamma$ if and only if $\rho_1 > \sqrt{2 + 2\cos\alpha}$.

\end{lemma}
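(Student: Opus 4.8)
The plan is to reduce the computation of $\mathrm{dist}(P,\Gamma)$ to a one–variable minimization, exactly as in the proof of Lemma~\ref{lemma1}, and then to determine precisely when that minimum is realized at two distinct points of $\Gamma$. Write $P=(\rho_1e^{i\theta_1},\rho_1e^{i\theta_2})$; for a point $\gamma(\rho,\theta)\in\Gamma$ one first carries out the optimization over the angular parameter $\theta$. Since the two components of $P$ have equal modulus, the same computation as in Lemma~\ref{lemma1} gives
\begin{equation*}
\mathrm{dist}(P,\Gamma)^2=\inf_{\rho>0}\Bigl(2\rho_1^2+\rho^2+\rho^{-2}-2\rho_1\sqrt{\rho^2+\rho^{-2}+2\cos\alpha}\Bigr),
\end{equation*}
and for each fixed $\rho$ the optimal $\theta$ is unique unless $\rho e^{i\theta_1}+\rho^{-1}e^{i\theta_2}=0$, i.e. unless $\rho=1$ and $\alpha=\pi$. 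The crucial observation is that the expression being minimized depends on $\rho$ only through $u:=\rho^2+\rho^{-2}$, and that the map $\rho\mapsto u$ from $(0,\infty)$ onto $[2,\infty)$ has fibre $\{\rho,\rho^{-1}\}$, collapsing to the single preimage $\rho=1$ exactly at $u=2$. Thus the only possible source of non–uniqueness is the pair of preimages $\rho,\rho^{-1}$, which corresponds to the symmetry of $\Gamma$ interchanging the two $\mathbb{C}$–factors — a symmetry that is exact here precisely because $P$ has equal moduli, in contrast to the situation of Lemma~\ref{lemma1}.

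Set $g(u):=2\rho_1^2+u-2\rho_1\sqrt{u+2\cos\alpha}$ on $[2,\infty)$. Then $g$ is strictly convex, hence has a unique global minimizer, and solving $g'(u^\ast)=0$ gives $u^\ast=\rho_1^2-2\cos\alpha$. If $u^\ast\le2$, the constrained infimum is attained only at $u=2$, hence only at $\rho=1$, which has a single preimage and at which the optimal angle is the unique bisecting direction of $\theta_1,\theta_2$; so $P$ has a unique closest point. If $u^\ast>2$, the infimum is attained only at $u=u^\ast$, whose two preimages $\rho_0\ne\rho_0^{-1}$ each yield a closest point of $\Gamma$ (each with its own, unique, optimal angle, since $\rho_0\ne1$).

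It remains to verify that these two candidates are genuinely distinct and that there are no others. Distinctness is immediate: the point of $\Gamma$ with parameter $\rho$ has first coordinate of modulus proportional to $\rho$, so $\rho_0$ and $\rho_0^{-1}$ produce points whose first coordinates have different moduli; and strict convexity of $g$, together with uniqueness of the optimal $\theta$ for each $\rho\ne1$, shows there are exactly two. Finally, $u^\ast>2$ is equivalent to $\rho_1^2-2\cos\alpha>2$, that is, to $\rho_1>\sqrt{2+2\cos\alpha}$, which is the claimed threshold; at the boundary $\rho_1=\sqrt{2+2\cos\alpha}$ one has $u^\ast=2$ and hence a unique closest point, consistent with the strict inequality. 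I expect the only delicate point to be the bookkeeping at $u=2$ — checking that the preliminary angular optimization introduces no spurious critical points, and dealing with the degenerate angle $\alpha=\pi$, at which the angular optimum at $\rho=1$ fails to be unique; but this last case causes no trouble, since $\alpha=\pi$ forces $u^\ast=\rho_1^2+2>2$, so that $\rho=1$ is never the minimizer.
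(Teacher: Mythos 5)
Your proof is correct and follows essentially the same route as the paper's: both reduce the distance to a one-variable minimization of a convex function after optimizing out the angle (the paper substitutes $t=\sqrt{\rho^2+\rho^{-2}+2\cos\alpha}$ where you use $u=\rho^2+\rho^{-2}$, which is the same reduction), and both rest on the dichotomy between an interior minimum, whose fibre is the pair $\{\rho,\rho^{-1}\}$, and a boundary minimum at $\rho=1$. Your write-up is in fact slightly more careful than the paper's, which works with the $\tfrac{\sqrt2}{2}$-normalized parametrization (so its computed threshold reads $\sqrt{1+\cos\alpha}$) and does not address distinctness of the two candidates or the degenerate case $\alpha=\pi$.
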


Next, we want to show what the conditions on a, b are needed, if $y = (\rho_1 e^{i\theta_1}, \rho_2e^{i\theta_2})\in Y$ has a unique closest point to $\Gamma_\epsilon$.

\begin{lemma}\label{lemma3}
	If $y = (\rho_1 e^{i\theta_1}, \rho_2e^{i\theta_2}) = T(s, \theta, a, b)$, then y has a unique closest point to $\Gamma_\epsilon$ if and only if $a^2 + b^2 < \frac1{\epsilon^2\sin2s}$.
\label{3}
\end{lemma}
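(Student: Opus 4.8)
The plan is to reduce the claim to the already-established Lemmas \ref{lemma1} and \ref{lemma2}, and then to analyze when the normal exponential map fails to be injective on the slice $\{a^2+b^2<\frac{1}{\epsilon^2\sin 2s}\}$. First I would set up the dictionary between the two coordinate descriptions: for $y=T(s,\theta,a,b)$, read off from the definition of $Y$ that $\rho_1 = \rho_1(s,a,b)$ and $\rho_2 = \rho_2(s,a,b)$ with $\rho_1^2 = (\frac{\cos s}{\epsilon\sqrt{\sin 2s}}+a\sin s)^2 + b^2\sin^2 s$ and $\rho_2^2 = (\frac{\sin s}{\epsilon\sqrt{\sin 2s}}+a\cos s)^2 + b^2\cos^2 s$, and the arguments $\theta_1,\theta_2$ are determined by $\theta$ together with the signs. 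The bound $a^2+b^2<\frac{1}{\epsilon^2\sin 2s}$ should be shown equivalent, via these formulas, to the quantitative non-degeneracy condition coming from Lemma \ref{lemma2} after the scaling by $\epsilon$; concretely, I expect $\rho_1=\rho_2$ (the borderline case of Lemma \ref{lemma1}) to occur exactly on the locus where $a=0$, and the condition $\rho_1>\sqrt{2+2\cos\alpha}$ of Lemma \ref{lemma2} to translate into $a^2+b^2$ exceeding $\frac{1}{\epsilon^2\sin 2s}$ — so the threshold in the statement is precisely the focal radius of $\Gamma_\epsilon$ along the normal plane spanned by $m,n$ at the point with parameter $s$.

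Second, for the ``if'' direction, suppose $a^2+b^2<\frac{1}{\epsilon^2\sin 2s}$. If $\rho_1\neq\rho_2$, Lemma \ref{lemma1} gives uniqueness directly. If $\rho_1=\rho_2$, I would invoke Lemma \ref{lemma2}: a point $(\rho_1 e^{i\theta_1},\rho_1 e^{i\theta_2})$ has two closest points on $\Gamma$ only when $\rho_1>\sqrt{2+2\cos\alpha}$ where $\alpha=\theta_1-\theta_2$; after rescaling to $\Gamma_\epsilon$ and substituting the explicit $\rho_1(s,0,b)$, the strict inequality $b^2<\frac{1}{\epsilon^2\sin 2s}$ forces the point to be below this focal threshold, hence the closest point is unique. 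For the ``only if'' direction, I would argue the contrapositive: if $a^2+b^2\geq\frac{1}{\epsilon^2\sin 2s}$ I produce a second foot point — either by exhibiting the reflected point $(s,\theta,-a,b)$ (or an analogous symmetric image) realizing the same distance when equality holds, or, when the inequality is strict, by using that $y$ then lies beyond the focal distance so that the function $\rho\mapsto f(\rho)$ from the proof of Lemma \ref{lemma1} (or its two-variable analogue) has at least two global minimizers.

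The main obstacle I anticipate is the bookkeeping in the case $\rho_1 = \rho_2$ together with pinning down the exact correspondence between the algebraic threshold $\sqrt{2+2\cos\alpha}$ and the geometric threshold $\frac{1}{\epsilon\sqrt{\sin 2s}}$: one must carefully track how $a$ and $b$ enter $\rho_1$, verify that the normal directions $m=ie^{-is}\Theta$ and $n=ie^{is}\Theta^\perp$ are genuinely orthonormal and span the normal space of $\Gamma$ at each point (so that $Y$ is the bona fide normal exponential map), and check that no other pair of would-be closest points (e.g.\ coming from the $\theta$-rotational symmetry) intervenes. A secondary point is confirming that the map $T$ restricted to $\Sigma_\epsilon$ is not merely injective as a nearest-point statement but actually a local diffeomorphism; this follows from the inverse function theorem once one computes that the Jacobian of $T$ is nonvanishing precisely under $a^2+b^2<\frac{1}{\epsilon^2\sin 2s}$, which is the same tubular-neighborhood computation and is naturally folded into the argument above.
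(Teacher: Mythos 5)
Your overall skeleton --- split on $\rho_1=\rho_2$ versus $\rho_1\neq\rho_2$, dispose of the latter with Lemma \ref{lemma1}, and reduce the former to the rescaled threshold of Lemma \ref{lemma2} --- is the same as the paper's. But the pivot of your argument is wrong: the locus $\rho_1=\rho_2$ is \emph{not} $\{a=0\}$. From your own formulas for $\rho_1^2$ and $\rho_2^2$ one gets the identity
\begin{equation*}
\rho_1^2-\rho_2^2=\cos 2s\,\Bigl(\frac{1}{\epsilon^2\sin 2s}-(a^2+b^2)\Bigr),
\end{equation*}
so $\rho_1=\rho_2$ occurs exactly when $s=\pi/4$ or when $a^2+b^2$ sits on the threshold; in particular $a=0$, $b\neq 0$, $s\neq\pi/4$ still gives $\rho_1\neq\rho_2$. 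Consequently your reduction ``substitute $\rho_1(s,0,b)$ into Lemma \ref{lemma2}'' computes the wrong quantity. The correct diagonal case is $s=\pi/4$ with both $a$ and $b$ present: there $\rho_1^2=\frac{(a+\epsilon^{-1})^2+b^2}{2}$ and $e^{i(\theta_1-\theta_2)}=\frac{\epsilon^{-1}+a-bi}{\epsilon^{-1}+a+bi}$, and the rescaled condition of Lemma \ref{lemma2} becomes $\bigl((a+\epsilon^{-1})^2+b^2\bigr)^2<4\epsilon^{-2}(a+\epsilon^{-1})^2$, which yields $a^2+b^2<\epsilon^{-2}$ when $a>-\epsilon^{-1}$ and requires a separate identification of the representation through the antipodal point ($\Theta\mapsto-\Theta$, $a\mapsto a+2\epsilon^{-1}$) when $a<-\epsilon^{-1}$. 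None of this appears in your sketch, and it is exactly where the threshold $\frac{1}{\epsilon^2\sin 2s}$ actually comes from.

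Second, your plan for the ``only if'' direction cannot work as stated. If $s\neq\pi/4$ and $a^2+b^2>\frac{1}{\epsilon^2\sin 2s}$, the identity above gives $\rho_1\neq\rho_2$, and Lemma \ref{lemma1} then says the closest point is \emph{unique}: there is no second foot point to exhibit, reflected or otherwise. What fails in that regime is not uniqueness of the nearest point but the fact that $\Gamma_\epsilon(s,\theta)$ is the nearest point. The equivalence has to be read through the same identity: when $(s,\theta)$ is the foot of the nearest point and $\rho_1\neq\rho_2$, the nearest point lies on the same side of $s=\pi/4$ as $y$ (cf.\ the proof of Lemma \ref{lemma4}), so the sign of $\cos 2s$ matches that of $\rho_1^2-\rho_2^2$ and the inequality $a^2+b^2<\frac{1}{\epsilon^2\sin 2s}$ holds automatically; the genuine two-foot-point phenomenon occurs only on the diagonal $\rho_1=\rho_2$, where it is precisely Lemma \ref{lemma2}. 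Your secondary remarks (orthonormality of $m,n$, invertibility of $DT$) are fine but are not needed for this lemma.
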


\begin{lemma}\label{lemma4}
If 	$y = T(s, \theta, a, b)$, and $a^2 + b^2 < \frac1{\epsilon^2\sin2s}$, then dist(y, $\Gamma_\epsilon$) = $\sqrt{a^2 + b^2}$.
\label{4}
\end{lemma}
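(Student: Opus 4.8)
The plan is to reduce the claim to the explicit distance formula already set up in the proof of Lemma \ref{lemma1}, and then observe that the geometric meaning of $a,b$ in the parametrization $Y = \Gamma_\epsilon + am + bn$ pins down exactly which critical point of that formula is realized. First I would note that, by the definition of $T$, the point $y = T(s,\theta,a,b) = \Gamma_\epsilon(s,\theta) + a m(s,\theta) + b n(s,\theta)$ lies on the affine normal plane to $\Gamma_\epsilon$ at the base point $\Gamma_\epsilon(s,\theta)$, with $m,n$ an orthonormal basis of that normal plane (this orthonormality is exactly what the computation of $e_1,e_2,m,n$ in Section 3 records). Hence the Euclidean distance from $y$ to the base point $\Gamma_\epsilon(s,\theta)$ is precisely $\sqrt{a^2+b^2}$, which gives the trivial bound $\mathrm{dist}(y,\Gamma_\epsilon) \le \sqrt{a^2+b^2}$.

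For the reverse inequality I would argue that the base point $\Gamma_\epsilon(s,\theta)$ is in fact the closest point of $\Gamma_\epsilon$ to $y$. The hypothesis $a^2+b^2 < \frac{1}{\epsilon^2 \sin 2s}$ is exactly the condition $(s,\theta,a,b)\in\Sigma_\epsilon$, and by Lemma \ref{lemma3} it guarantees that $y$ has a \emph{unique} nearest point on $\Gamma_\epsilon$. Since $y - \Gamma_\epsilon(s,\theta) = am+bn$ is a normal vector to $\Gamma_\epsilon$ at $\Gamma_\epsilon(s,\theta)$, the base point is a critical point of the function $P \mapsto \mathrm{dist}(y,P)^2$ on $\Gamma_\epsilon$; I would then check that it is a local minimum (for $\epsilon$ small, or equivalently $a^2+b^2$ small relative to $\frac{1}{\epsilon^2\sin2s}$, the relevant Hessian is positive because the second fundamental form contribution $I\!I$ satisfies $\|a m + b n\|\,\|I\!I\| < 1$, i.e. $y$ lies inside the focal radius — this is again what the bound $a^2+b^2 < \frac{1}{\epsilon^2\sin2s}$ encodes, as used in Lemma \ref{lemma3}). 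By uniqueness of the nearest point, a local minimum of the distance is the global one, so $\mathrm{dist}(y,\Gamma_\epsilon) = \|am+bn\| = \sqrt{a^2+b^2}$.

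Concretely, I expect the cleanest route is to combine the explicit formula from Lemma \ref{lemma1}: writing $y$ in the coordinates $(\rho_1 e^{i\theta_1}, \rho_2 e^{i\theta_2})$ and inserting these into
\[
d^2 = \inf_{\rho,\theta}\ \tfrac12\rho^2 + \tfrac12\rho^{-2} + \rho_1^2 + \rho_2^2 - \sqrt2\,\rho_1\rho\cos(\theta-\theta_1) - \sqrt2\,\rho_2\rho^{-1}\cos(\theta-\theta_2),
\]
one verifies by direct substitution that $(\rho,\theta) = (\text{the base-point parameters scaled by }\epsilon)$ is a critical point and that the critical value equals $a^2+b^2$; Lemma \ref{lemma1} (or Lemma \ref{lemma3}) then tells us this critical point is the minimizer, not merely a critical point. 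The main obstacle is the bookkeeping in this substitution: one must carefully express $\rho_1,\rho_2,\theta_1,\theta_2$ of $y = T(s,\theta,a,b)$ in terms of $(s,\theta,a,b)$ using the formula for $Y$ in Section 3, and then simplify the resulting trigonometric expression for $d^2$ down to $a^2+b^2$ using the identities satisfied by $e_1,e_2,m,n$ and $\sin 2s$. This is the ``straightforward but tedious'' computation the authors have deferred to the Appendix, and it is where essentially all the work lies; the conceptual content — orthonormality of $(m,n)$ plus uniqueness from Lemma \ref{lemma3} — is what makes the identity $\mathrm{dist}(y,\Gamma_\epsilon)=\sqrt{a^2+b^2}$ inevitable once the computation closes.
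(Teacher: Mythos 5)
Your upper bound $\mathrm{dist}(y,\Gamma_\epsilon)\le\sqrt{a^2+b^2}$ is fine, and so is the observation that the base point $\Gamma_\epsilon(s,\theta)$ is a critical point of $P\mapsto|y-P|^2$ on $\Gamma_\epsilon$ which, under the hypothesis $a^2+b^2<\frac{1}{\epsilon^2\sin 2s}$ (indeed inside the focal radius $\epsilon^{-1}(\sin2s)^{-3/2}$ coming from the principal curvatures $\pm\epsilon(\sin2s)^{3/2}$), is a local minimum. The gap is the final inference: ``by uniqueness of the nearest point, a local minimum of the distance is the global one.'' Uniqueness of the global minimizer (Lemma \ref{lemma3}) does not exclude further local minima with larger critical values, so knowing that the base point is \emph{a} local minimum does not identify it with \emph{the} nearest point. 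Your fallback route has the same problem: verifying by substitution that the base point is a critical point of the expression in Lemma \ref{lemma1} with critical value $a^2+b^2$ says nothing about whether it is the minimizing critical point, and neither Lemma \ref{lemma1} nor Lemma \ref{lemma3} asserts that a given critical point is the minimizer. (Trying instead to write the true nearest point $Q$ as $y=Q+d\nu$ and invoke injectivity of $T$ on $\Sigma_\epsilon$ would be circular, since that injectivity is established only after, and by means of, this lemma.)

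The paper closes this gap by a direct analysis of $f(\rho,\theta)=|y-\Gamma(\rho,\theta)|^2$: for each $\rho$ the $\theta$-critical points come in a pair $\theta_0,\theta_0+\pi$; along the branch $\theta_0+\pi$ the function is strictly increasing in $\rho$, so it carries no minimum; and along $\theta_0$ the values of $f$ at successive $\rho$-critical points are strictly increasing (this is the monotone function $g(\rho)$ from the proof of Lemma \ref{lemma1}). This forces the distance function restricted to $\{\rho_1>\rho_2\}\cap\Gamma$ to have exactly one local minimum; since the base point is a local minimum it must be that one, hence the global minimizer, and the distance is $\sqrt{a^2+b^2}$. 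The case $\rho_1=\rho_2$ is handled separately by the explicit formula from Lemma \ref{lemma2}. Some version of this ``there is only one local minimum'' count (or an equivalent continuity/deformation argument along the normal segment from the base point to $y$) is what your write-up is missing.
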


>From previous arguments, we know that if
\begin{equation*}
	\begin{aligned}
       y &= T(s, \theta, a, b)\\
       &= \left( \frac{\cos s}{\epsilon\sqrt{\sin2s}}\Theta + a\sin s\cdot\Theta - b\sin s\cdot\Theta^\perp, \frac{\sin s}{\epsilon\sqrt{\sin2s}}\Theta + a\cos s\cdot\Theta + b\cos s\cdot\Theta^\perp\right) \\
       &\in T(\Sigma_\epsilon).
    \end{aligned}
\end{equation*}
We can define a map S: $T(\Sigma_\epsilon) \rightarrow \Sigma_\epsilon$ as follows: if $P \in T(\Sigma_\epsilon)$, then we suppose $P = T(s_0, \theta_0, a_0, b_0)$ and $Q_0 = \frac{e^{is_0}}{\epsilon\sqrt{\sin2s_0}}\Theta_0 \in \Gamma_\epsilon$, satisfying $a_0^2 + b_0^2 < \frac1{\epsilon^2\sin2s_0}$. By Lemma \ref{lemma4}, $Q_0$ is the closest point. But we know $P \in T(\Sigma_\epsilon)$, so the closest point is unique. As a result, let $S(P) = (s_0, \theta_0, a_0, b_0)$,
\begin{equation*}
   \begin{aligned}
	  &a = (P - Q_0)\cdot m_0,\\
	  &b = (P - Q_0)\cdot n_0,
   \end{aligned}
\end{equation*}
where $m_0, n_0$ are the normal vector at Q. Then T is well-defined and $T\circ S = Id$, $S\circ T = Id$. So $Y = \Gamma_\epsilon + am + bn$ is a coordinate in $\Sigma_\epsilon$.

Denote $a = r\cos\phi$, $b = r\sin\phi$. By the definition, we know $r$ and $\phi$ are smooth in $\Sigma_\epsilon$. Finally in this section, we show the regularity of $r$ and $\phi$ in $\mathbb R^4$.
\begin{lemma}
 $\phi$ is Lipschitz continuous near $\mathbb C^2\backslash \Sigma_\epsilon$ and r is Lipschitz continuous in $\mathbb C^2$. Especially $\phi$ is $C^{1,1}$ near 0.

\end{lemma}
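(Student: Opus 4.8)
The plan is to treat the three assertions separately; the claim for $r$ is immediate, and the two claims for $\phi$ rest on the symmetries of $\Gamma_{\epsilon}$. For $r$ I simply set $r(y):=\operatorname{dist}(y,\Gamma_{\epsilon})$ on all of $\mathbb{C}^{2}$; by Lemma~\ref{lemma4} this agrees on $T(\Sigma_{\epsilon})$ with $\sqrt{a^{2}+b^{2}}$ and so extends the Fermi radial coordinate, and since $\Gamma_{\epsilon}$ is a properly embedded, hence closed, surface, $y\mapsto\operatorname{dist}(y,\Gamma_{\epsilon})$ is $1$-Lipschitz on $\mathbb{C}^{2}$; that is the entire argument. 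It remains to describe the ``bad set'' $B:=\mathbb{C}^{2}\setminus T(\Sigma_{\epsilon})$. By Lemma~\ref{lemma1} a point with two distinct nearest points on $\Gamma_{\epsilon}$ has $|z_{1}|=|z_{2}|$; by Lemma~\ref{lemma2}, after the scaling $\Gamma_{\epsilon}=\tfrac1\epsilon\Gamma$, the non-uniqueness set is exactly $\mathcal{N}=\{(\rho e^{i\theta_{1}},\rho e^{i\theta_{2}}):\rho>\tfrac1\epsilon\sqrt{2+2\cos(\theta_{1}-\theta_{2})}\}$, an open subset of the hypersurface $D_{\epsilon}:=\{|z_{1}|=|z_{2}|\}$; and together with Lemma~\ref{lemma3} this gives $B=\overline{\mathcal{N}}\cup F$, where $F=\{a^{2}+b^{2}=\tfrac1{\epsilon^{2}\sin2s}\}$ is the lower-dimensional focal set, which contains the origin (at $0$ the whole circle $\{s=\tfrac\pi4\}\subset\Gamma_{\epsilon}$ is equidistant, at distance $\epsilon^{-1}$).

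For $\phi$ near $B\setminus\{0\}$ the tool is the isometry $\sigma_{P}$ of $\mathbb{C}^{2}$ attached to a point $P=(\rho e^{i\theta_{1}},\rho e^{i\theta_{2}})\in D_{\epsilon}$, namely $\sigma_{P}(z_{1},z_{2})=e^{i(\theta_{1}+\theta_{2})}(\bar z_{2},\bar z_{1})$: a direct check shows $\sigma_{P}$ is an orientation-preserving involution, fixes $P$, and maps $\Gamma_{\epsilon}$ onto itself, sending the point with parameters $(s,\theta)$ to the one with parameters $(\tfrac\pi2-s,\,\theta_{1}+\theta_{2}-\theta)$, hence also preserving the orientation of $\Gamma_{\epsilon}$. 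Thus for $P$ in the interior of $\mathcal{N}$, where by Lemma~\ref{lemma2} there are exactly two nearest points $Q^{\pm}$, one on each side of $D_{\epsilon}$, $\sigma_{P}$ interchanges $Q^{+}$ and $Q^{-}$, and — preserving the orientations of both $\mathbb{C}^{2}$ and $\Gamma_{\epsilon}$ — it carries the positively oriented normal frame $(m,n)$ at $Q^{+}$ to the one at $Q^{-}$ by a rotation of the normal $2$-plane. One checks, by computing the action of $d\sigma_{P}$ on this frame, that the rotation is the identity (equivalently, that $b=0$ on $\mathcal{N}$: the nearest vector is purely radial). It then follows that the two branches $\phi^{\pm}$, defined for $|z_{1}|\gtrless|z_{2}|$ and each smooth off $F$ because the corresponding metric projection is, agree along $D_{\epsilon}\cap\mathcal{N}$; so $\phi$ is continuous across $\mathcal{N}$ and, by uniform $C^{1}$-bounds on the one-sided projections on compact subsets away from $F$, locally Lipschitz on a neighborhood of $\mathcal{N}\setminus F$. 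Near the remaining points of $B$ (points of $F$ other than $0$, including $\partial\mathcal{N}$) I would feed the explicit distance analysis behind Lemmas~\ref{lemma1}--\ref{lemma4} — the equation $f'(\rho)=0$ and its degeneracy — into the same scheme: there $y\mapsto Q(y)$ is only Lipschitz, but the normalized direction $(y-Q(y))/|y-Q(y)|$, hence $\phi$, stays Lipschitz, its potential multivaluedness again resolved by $\sigma_{P}$.

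For $\phi$ near $0$ the degeneracy is of a different kind: the nearest-point set of a $y$ near $0$ clusters near the whole circle $C_{0}=\{s=\tfrac\pi4\}$, so the Fermi angle $\theta$ is genuinely undefined there, whereas $\phi$ is not, since $0-Q=-Q$ lies along $\pm m$ for every $Q\in C_{0}$ and hence $\phi(0)\in\{0,\pi\}$ is well defined. Using the simultaneous phase rotation $U(1)$-symmetry (which leaves $\Gamma_{\epsilon}$ invariant) I can, for each $y\ne 0$ near $0$, rotate the phase so that the nearest point lies near a fixed point of $C_{0}$; there the metric projection is smooth in the four variables transverse to $C_{0}$, and $\phi(y)=\arg\!\big(\langle y-Q(y),m\rangle+i\langle y-Q(y),n\rangle\big)$ is manifestly invariant under that rotation, so it descends to a smooth function of $y$ on a punctured neighborhood of $0$ with all derivatives bounded up to $0$; combined with the continuity $\phi(y)\to\phi(0)$ this yields $\phi\in C^{1,1}$ near $0$. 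Equivalently, near $0$ one exhibits $\phi$ as a smooth $U(1)$-invariant function of $y$, which is automatically $C^{1,1}$.

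The main obstacle is the analysis near the focal set $F$ in the second step: identifying $B$ precisely, checking that the normal-frame rotation induced by $\sigma_{P}$ is trivial so that the two branches of $\phi$ genuinely match (not merely that each is smooth on its own side), and upgrading continuity to a Lipschitz estimate across $F$, where the metric projection itself degenerates and only the normalized vector $(y-Q)/|y-Q|$, not $Q$ itself, remains well behaved. By contrast the claim for $r$ (contraction property of distance functions) and the claim for $\phi$ near $0$ ($U(1)$-symmetry killing the degenerate $\theta$-direction) are soft.
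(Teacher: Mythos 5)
Your argument for $r$ is the paper's argument (the distance function is $1$-Lipschitz), and your branch-matching argument across the non-uniqueness set is a legitimate, arguably cleaner, variant of what the paper does: where the paper writes out $a_1,b_1$ explicitly and checks invariance under $\rho\to\rho^{-1}$, $s\to\frac{\pi}{2}-s$, you obtain the same identity $(a_1,b_1)=(a_2,b_2)$ from the isometry $\sigma_P$, which fixes $P$, swaps the two nearest points, and (as one can verify) carries the frame $(m,n)$ at one nearest point exactly onto the frame at the other. However, your parenthetical ``equivalently, that $b=0$ on $\mathcal N$: the nearest vector is purely radial'' is false: the paper's explicit formula gives, on $\{\rho_1=\rho_2\}$, $b_1=\frac{\rho_1^2\sin(\theta_2-\theta_1)}{D}\left(\rho^{-1}\sin s+\rho\cos s\right)$ with $D>0$, which is nonzero whenever $\sin(\theta_2-\theta_1)\neq 0$. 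This is not load-bearing (equality of the two branches, not vanishing of $b$, is what is needed), but it is not an equivalent reformulation of your frame computation.

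The genuine gap is in the $C^{1,1}$ statement at the origin. Your argument rests on the premise that on a punctured neighborhood of $0$ the nearest-point projection is single-valued and smooth (transverse to the circle $C_0$, after using the $U(1)$ symmetry), so that $\phi$ is smooth there with all derivatives bounded up to $0$. That premise fails: by Lemma \ref{lemma2} the non-uniqueness threshold is $\sqrt{2+2\cos\alpha}$, which tends to $0$ as $\alpha=\theta_1-\theta_2\to\pi$, so every point of the form $(z,-z)$ with $z\neq 0$ (and nearby points of $\{|z_1|=|z_2|\}$ with $\alpha$ close to $\pi$) has two nearest points on $\Gamma_\epsilon$; these points accumulate at the origin. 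Hence the cut locus enters every punctured neighborhood of $0$, the metric projection is not single-valued there, and $\phi$ is a priori only Lipschitz across that set, so ``smooth on a punctured neighborhood of $0$ with bounded derivatives'' is not available and your route to $C^{1,1}$ collapses. The paper argues differently: having shown that the two branches of $(a,b)$ agree on the non-uniqueness set and that $\phi(0)=\pi$, it compares the antipodal directions $(\rho_1,\theta_1,\rho_2,\theta_2)$ and $(\rho_1,\theta_1+\pi,\rho_2,\theta_2+\pi)$ (evenness of $\phi$ under $y\mapsto -y$) together with the Lipschitz control to conclude $\nabla\phi(0)=0$ and hence the $C^{1,1}$ property near $0$. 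If you wish to keep the $U(1)$-symmetry idea you must still address differentiability of $\phi$ across the non-uniqueness set accumulating at $0$, which your proposal does not do. (Your treatment of the focal set $F$ away from the origin is also only a sketch, but the paper itself is brief there; the origin is the substantive defect.)
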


\begin{proof}
As $\vert\nabla r\vert \leq 1$, r is Lipschitz continuous, but not $C^1$. Because when $x \in \mathbb C^2 \backslash \Sigma_\epsilon$, the nearest point is not unique. So r is continuous but not smooth.
Next we will show that $\phi$ is continuous.

Consider $\Gamma = \frac{e^{is}}{\sqrt{\sin2s}}\Theta$. Denote $P = (\rho_1e^{i\theta_1}, \rho_2e^{i\theta_2}) \neq 0$. Suppose $Q_1$ is a closest point to $P$:
\begin{equation*}
	\begin{aligned}
		Q_1 = (\frac{\sqrt2}2\rho e^{i\theta}, \frac{\sqrt2}2\rho^{-1}e^{i\theta}).\\
	\end{aligned}
\end{equation*}
Then the normal vectors at $P$ are:
\begin{equation*}
	\begin{aligned}
		&m_1 = (\sin s\cdot e^{i\theta}, \cos s\cdot e^{i\theta}),\\
		&n_1 = (-\sin s\cdot ie^{i\theta}, \cos s\cdot ie^{i\theta}),\\
		&\frac{\sqrt2}2\rho = \frac{\cos s}{\sqrt{\sin2s}}\Rightarrow  \rho^2 = \frac{\cos s}{\sin s}.
	\end{aligned}
\end{equation*}
>From previous arguments, we get
\begin{equation*}
	\begin{aligned}
		\cos\theta = \frac{\rho_1\rho\cos\theta_1 + \rho_2\rho^{-1}\cos\theta_2}{\sqrt{\rho_1^2\rho^2 + \rho_2^2\rho^{-2} + 2\rho_1\rho_2\cos(\theta_1 - \theta_2)}}, \\
		\sin\theta = \frac{\rho_1\rho\sin\theta_1 + \rho_2\rho^{-1}\sin\theta_2}{\sqrt{\rho_1^2\rho^2 + \rho_2^2\rho^{-2} + 2\rho_1\rho_2\cos(\theta_1 - \theta_2)}}, \\
		\cos(\theta - \theta_1) = \frac{\rho_1\rho + \rho_2\rho^{-1}\cos(\theta_2 - \theta_1)}{\sqrt{\rho_1^2\rho^2 + \rho_2^2\rho^{-2} + 2\rho_1\rho_2\cos(\theta_1 - \theta_2)}},\\
		\sin(\theta - \theta_1) = \frac{\rho_2\rho^{-1}\sin(\theta_2 - \theta_1)}{\sqrt{\rho_1^2\rho^2 + \rho_2^2\rho^{-2} + 2\rho_1\rho_2\cos(\theta_1 - \theta_2)}},\\
		\cos(\theta - \theta_2) = \frac{\rho_2\rho^{-1} + \rho_1\rho\cos(\theta_2 - \theta_1)}{\sqrt{\rho_1^2\rho^2 + \rho_2^2\rho^{-2} + 2\rho_1\rho_2\cos(\theta_1 - \theta_2)}},\\
		\sin(\theta - \theta_2) = \frac{-\rho_1\rho\sin(\theta_2 - \theta_1)}{\sqrt{\rho_1^2\rho^2 + \rho_2^2\rho^{-2} + 2\rho_1\rho_2\cos(\theta_1 - \theta_2)}},\\
		\rho\sin s = \sqrt{\frac{\cos s}{\sin s}}\cdot\sin s = \sqrt{\frac{\sin 2s}2},\\
		\rho^{-1}\cos s = \sqrt{\frac{\sin 2s}2},
	\end{aligned}
\end{equation*}

\begin{equation*}
	\begin{aligned}
		a_1 &= (P - Q_1)\cdot m\\
		&= \rho_1 \sin s\cdot\frac{\rho_1\rho + \rho_2\rho^{-1}\cos(\theta_2 - \theta_1)}{\sqrt{\rho_1^2\rho^2 + \rho_2^2\rho^{-2} + 2\rho_1\rho_2\cos(\theta_1 - \theta_2)}} - \frac{\sqrt2}2 \rho\sin s +\\
		&\rho_2\cos s\cdot\frac{\rho_2\rho^{-1} + \rho_1\rho\cos(\theta_2 - \theta_1)}{\sqrt{\rho_1^2\rho^2 + \rho_2^2\rho^{-2} + 2\rho_1\rho_2\cos(\theta_1 - \theta_2)}} - \frac{\sqrt2}2\rho^{-1}\cos s,\\
		b_1 &= \rho_1\sin s\cdot \frac{\rho_2\rho^{-1}\sin(\theta_2 - \theta_1)}{\sqrt{\rho_1^2\rho^2 + \rho_2^2\rho^{-2} + 2\rho_1\rho_2\cos(\theta_1 - \theta_2)}} + \\
		&\rho_2\cos s\cdot \frac{\rho_1\rho\sin(\theta_2 - \theta_1)}{\sqrt{\rho_1^2\rho^2 + \rho_2^2\rho^{-2} + 2\rho_1\rho_2\cos(\theta_1 - \theta_2)}}	.
		\end{aligned}
\end{equation*}

If $\rho_1 = \rho_2 > 0$, then there are two closest points to $P$, one of which is $Q_1$. Let the other be $Q_2$. By symmetry, we know:
\begin{equation*}
	Q_2 = (\frac{\sqrt2}2\rho^{-1}e^{i\theta}, \frac{\sqrt2}2\rho e^{i\theta}).
\end{equation*}

 We observe that, when $Q_1 \rightarrow Q_2$, we have $\rho \rightarrow \rho^{-1}$, $s \rightarrow (\frac\pi2 - s)$.
So $a_1 = a_2$, $b_1 = b_2$.

If $\rho_1 \neq \rho_2$,  let $\rho_1, \rho_2$ tend to $0$. As a result, $\rho$ will tend to 1, and
\begin{equation*}
	\begin{aligned}
		&a_1 = -1,\\
		&b_1 = 0.
	\end{aligned}
\end{equation*}
So $\phi(0) = \pi$. Therefore $\phi$ is continuous around $\mathbb C^2\backslash \Sigma_\epsilon$. So it is Lipschitz continuous around $\mathbb C^2\backslash \Sigma_\epsilon$. At the origin, we consider two directions: $(\rho_1, \theta_1, \rho_2, \theta_2)$, $(\rho_1, \theta_1 + \pi, \rho_2, \theta_2 + \pi)$. As $\rho$ is Lipschitz continuous near 0, we deduce $\nabla \phi(0) = 0$. So $\phi$ is $C^{1,1}$ near the origin.
\end{proof}
We remark that from the expression of $a_1$, $b_1$, we deduce that
\begin{equation*}
	\begin{aligned}
		\phi (\rho_1, \theta_1, \rho_2,\theta_2) = -\phi(\rho_1, \theta_2, \rho_2, \theta_1) = \phi(\rho_2, \theta_1, \rho_1, \theta_2) = -\phi(\rho_1, -\theta_1, \rho_2, -\theta_2);\\
		r(\rho_1, \theta_1, \rho_2, \theta_2) = r(\rho_2, \theta_1, \rho_1, \theta_2) = r(\rho_1, \theta_2, \rho_2, \theta_1) = r(\rho_1, -\theta_1, \rho_2, -\theta_2).\\
	\end{aligned}
\end{equation*}

\section{ The Jacobi operator}
In this section, we show that the Jacobi operator naturally arise from the projection of the error of a naturally defined approximate solution around the manifold. We recall that the energy functional in $\mathbb R^4$ is:
\begin{equation*}
	E(\psi, A) = \frac12\int _{\mathbb R^4}\{|\nabla_A\psi|^2 + |d A|^2 + \frac{\lambda}{4}(|\psi|^2-1)^2\},
\end{equation*}
for the fields
\begin{equation*}
	\begin{aligned}
		&A = (A_1, A_2, A_3, A_4):\mathbb R^4 \rightarrow \mathbb R^4,\\
		&\psi = \psi_1 + i\psi_2: \mathbb R^4\rightarrow \mathbb C.
	\end{aligned}
\end{equation*}
And $\nabla_A = \nabla - iA$ is the covariant gradient. As we can consider $A$ as a 1-form on $\mathbb R^4$, then $dA$ is well-defined.

\begin{equation*}
	\begin{aligned}
		|\nabla_A\psi|^2 = \Sigma_{j=1}^4|\partial_j\psi - iA_j\psi|^2,\\
		|d A|^2 = |\partial_jA_k - \partial_kA_j|^2.
	\end{aligned}
\end{equation*}

Critical point $u = (\psi, A)$ of $E(\psi, A)$ satisfies the Ginzburg-Landau (GL) equations
\begin{equation}
	-\Delta_A\psi + \frac{\lambda}{2}(|\psi|^2-1)\psi = 0,
\end{equation}
\begin{equation}\label{GL2}
	\begin{aligned}
		\Sigma_{k\neq j}\partial_k(\partial_kA_j - \partial_jA_k) + Im((\nabla_{A})_{e_j}\psi\cdot\bar{\psi}) = 0, j = 1,2,3,4.
	\end{aligned}
\end{equation}
Equations \eqref{GL2} has the form
\begin{equation}
	d^*dA = Im(\nabla_A\psi\cdot\bar{\psi}).
\end{equation}
We also have
\begin{equation*}
	\begin{aligned}
		\Delta_A \psi &= \nabla_A\nabla_A \psi \\
		&= \Delta\psi - i\partial_jA_j\psi - 2iA_j\partial_j\psi - |A|^2\psi\\
		&=\Delta\psi + id^*A\psi - 2i<A, d\psi>-|A|^2\psi.
	\end{aligned}
\end{equation*}
Let us define
\begin{equation*}
	F(u) =
	\begin{pmatrix}
		-\Delta_A\psi + \frac{\lambda}{2}(|\psi|^2-1)\psi\\
		d^*dA - Im(\nabla_A\psi\cdot\bar{\psi})
	\end{pmatrix}.
\end{equation*}

Let $ W := (A^{(1)}, \psi^{(1)}) = (V(r)\nabla \phi, U(r)e^{i\phi})$ be the +1 vortex solution. Then we have:

	\begin{equation*}
		\begin{aligned}
			A_3 &= -\frac{V}{r}\sin\phi, \text{ }
			A_4 = \frac{V}{r}\cos\phi,\\
			\partial_3A_3 &= \frac{V}{r^2}\sin2\phi - \frac{V'}{2r}\sin2\phi,\\
			\partial_4A_3 &= -\frac{V'}{r}\sin^2\phi - \frac{V}{r^2}\cos2\phi,\\
			\partial_3A_4 &= \frac{V'}{r}\cos^2\phi - \frac{V}{r^2}\cos2\phi,\\
			\partial_4A_4 &= \frac{V'}{2r}\sin2\phi - \frac{V}{r^2}\sin2\phi.\\
		\end{aligned}
	\end{equation*}		

In the following context, we use the following properties(see \cite{BC}, \cite{P} ):
	\begin{equation*}
		\begin{aligned}
			1.&-V'' + \frac{V'}{r} - U^2 + VU^2 = 0,\\
			2.&U = cr, V = dr^2, \text{as} \  r \rightarrow 0,\\
			3.&U = 1+O(e^{-m_\lambda r}), V = 1+ O(e^{-r}) \ \text{as}\ r\rightarrow \infty, \text{with } m_\lambda := \min{(\sqrt\lambda, 2)}.
		\end{aligned}
	\end{equation*}

Let $\epsilon$ be a small parameter and denote $t_1 = a - \epsilon f$, $t_2= b - \epsilon g$, $t_1 + it_2 = \tilde re^{i\tilde \phi} $. When it is close to $\Gamma_\epsilon$, we define the following $(\psi, A)$ as the approximation solution:
\begin{equation*}
	\begin{aligned}
		\psi(a, b) &:= \psi^{(1)}(a - \epsilon f, b - \epsilon g),\\
		A &= A_j dy^j\\
		& :=V(\tilde r)\nabla(\phi(a - \epsilon f, b - \epsilon g))\\
		&= (-\epsilon f_sA_3 - \epsilon g_sA_4)dy^1 + (-\epsilon f_\theta A_3 - \epsilon g_\theta A_4)dy^2\\
		&+A_3dy^3 + A_4dy^4\\
		&= (\epsilon f_s\cdot \frac{V}{\tilde r}\sin\tilde\phi - \epsilon g_s\cdot\frac{V}{\tilde r}\cos\tilde\phi)dy^1\\
		&+ (\epsilon f_\theta\cdot \frac{V}{\tilde r}\sin\tilde\phi - \epsilon g_\theta\cdot\frac{V}{\tilde r}\cos\tilde\phi)dy^2\\
		& -\frac{V}{\tilde r}\sin\tilde\phi dy^3 + \frac{V}{\tilde r}\cos\tilde\phi dy^4.\\
	\end{aligned}
\end{equation*}
	
	Denote $y = (\frac{\cos s}{\sqrt{\sin 2s}}\Theta + a \sin s\cdot\Theta - b\sin s\cdot\Theta^\perp, \frac{\sin s}{\sqrt{\sin2s}}\Theta + a\cos s\cdot\Theta + b\cos s\cdot\Theta^\perp)$, and $\Gamma_\epsilon = \frac{1}{\epsilon}\Gamma = (\frac{\cos s}{\epsilon\sqrt{\sin 2s}}\Theta, \frac{\sin2}{\epsilon\sqrt{\sin2s}}\Theta)$. Then we can get a coordinate with respect to $(s, \theta, a, b)$ near $\Gamma_\epsilon$. Denote $\rho = (\sin2s)^{-\frac12}$ for simplicity.
	
	Recall the Jacobian operator of $\Gamma$ (see \cite{Arezzo}):
For normal vector field $N = ie^{-is}f\cdot\Theta + ie^{is}g\Theta^\perp = fm + gn$, we have
\begin{equation}\label{Jacobi operator}
	\begin{aligned}
		L_HN &= ie^{-is}\Theta[(\sin2s)^3\partial_s^2f + 2(\sin2s)^2\cos2s\cdot\partial_s f + \sin2s\cdot\partial_\theta^2f -\\
		& 2\sin2s\cos2s\cdot\partial_\theta g + 2(\sin2s)^3\cdot f - \sin2s(\cos2s)^2\cdot f] + ie^{is}\Theta^\perp \cdot \\
		&[(\sin2s)^3\partial_s^2g + 2(\sin2s)^2\cos2s\cdot\partial_s g + \sin2s\cdot\partial_\theta^2g + 2\sin2s\cos2s\cdot\partial_\theta f +\\
		&2(\sin2s)^3\cdot g - \sin2s(\cos2s)^2\cdot g].
	\end{aligned}
\end{equation}
	
	In this section, we show
	\begin{theorem}\label{Jacobi}
		When $\epsilon$ is small enough, the equations
		\begin{equation*}
			\begin{cases}
				<F(u), (\partial_3\psi-iA_3\psi, -\partial_3 A + \nabla A_3)> = 0,\\
				<F(u), (\partial_4\psi-iA_4\psi, -\partial_4 A + \nabla A_4)> = 0,\\
			\end{cases}
		\end{equation*}
can be written as
		\begin{equation*}
			L_H(f, g)  = (G_1, G_2)
		\end{equation*}
		where the inner product is the integral over the whole space $(a, b) \in \mathbb R^2$. $G_1,G_2=O(\epsilon)$ is of higher order in $f,g$
	\end{theorem}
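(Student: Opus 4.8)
The plan is to compute the two projected components of the error $F(u)$ against the (approximately) zero modes generated by translations in the $a$- and $b$-directions, and to show that, after integrating out the fast variables $(a,b)$, the result is exactly the Jacobi operator $L_H$ applied to the perturbation pair $(f,g)$, up to terms of size $O(\epsilon)$ that depend on $(f,g)$ in a higher-order fashion. First I would fix the Fermi coordinates $(s,\theta,a,b)$ constructed in Section~3, write the approximate solution $u=(\psi,A)$ as above with $\psi^{(1)}(t_1,t_2)=U(\tilde r)e^{i\tilde\phi}$, $t_1=a-\epsilon f$, $t_2=b-\epsilon g$, and expand the Euclidean Laplacian, the Hodge operator $d^*d$, and the covariant derivatives in these coordinates. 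The key observation is that $(\partial_3\psi-iA_3\psi,\,-\partial_3 A+\nabla A_3)$ and $(\partial_4\psi-iA_4\psi,\,-\partial_4 A+\nabla A_4)$ are, to leading order, the kernel elements of the $+1$-vortex linearized operator $\mathbb{L}_W$ coming from the two translations of the planar vortex; so pairing $F(u)$ with them and using the translation invariance of the planar GL system kills the leading (order $\epsilon^0$) contribution, and the surviving terms come from (i) the curvature of $\Gamma_\epsilon$ — i.e. the variation of the frame $(m,n)$ and of the metric coefficients in $s$ — and (ii) the derivatives of $f,g$ entering through the phase and the potential $A$.

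The main steps, in order: (1) record the exact expressions for $\partial_3\psi,\partial_4\psi$ and for $\partial_3 A,\partial_4 A$ in the moving frame, using the formulae for $\partial_3A_3,\partial_4A_3,\partial_3A_4,\partial_4A_4$ given above for the planar vortex, plus the extra $dy^1,dy^2$ components of $A$ proportional to $\epsilon f_s,\epsilon g_s,\epsilon f_\theta,\epsilon g_\theta$; (2) compute $F(u)$ in these coordinates: this splits as $\mathbb{L}_W(\text{something})$ plus error terms carrying the Fermi-coordinate Christoffel-type corrections, which to leading order reproduce the coefficients $(\sin 2s)^3,\ 2(\sin 2s)^2\cos 2s,\ \sin 2s,\ \pm 2\sin 2s\cos 2s,\ 2(\sin 2s)^3,\ -\sin 2s(\cos 2s)^2$ appearing in \eqref{Jacobi operator}; (3) take the $L^2((a,b)\in\mathbb{R}^2)$ inner product against the two translation modes and use the normalization $\int_{\mathbb{R}^2}(|\nabla U|^2 + \text{potential terms})$, together with the orthogonality relations for the planar vortex kernel (so that off-diagonal spurious terms, e.g. those mixing $m$ and $n$ at the vortex level, integrate to zero or to the universal constant $c_\lambda=\int(\text{vortex density})$); (4) collect the surviving terms: the $\partial_s^2$, $\partial_s$, $\partial_\theta^2$, cross $\partial_\theta$, and zeroth-order pieces assemble into $L_H(f,g)$, while everything else is bounded by $C\epsilon$ and is at least linear (or higher) in $(f,g)$ and their derivatives — this is what I would package as $(G_1,G_2)=O(\epsilon)$. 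Along the way I would use the exponential decay of $U-1$ and $V-1$ (property 3 above) to guarantee that all the $(a,b)$-integrals converge and that contributions from the region where $a^2+b^2$ is comparable to $(\epsilon^2\sin 2s)^{-1}$ (the boundary of the validity of Fermi coordinates, Lemma~\ref{lemma3}) are exponentially small in $1/\epsilon$ and hence absorbable into $G_1,G_2$.

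The hardest part, and the one where I would spend the most care, is step (2)–(3): disentangling which terms in $F(u)$, after pairing with the translation modes, reproduce the precise coefficients in \eqref{Jacobi operator} and which are genuine remainders. Two subtleties make this delicate. First, the translation modes of the $+1$ vortex are not exact kernel elements of $\mathbb{L}_W$ in these curved coordinates — the frame $(m,n)$ rotates as $s$ varies (since $m=ie^{-is}\Theta$, $n=ie^{is}\Theta^\perp$), so one must keep track of $\partial_s m$, $\partial_s n$ and the induced connection terms, and it is exactly these that furnish the $\cos 2s$-weighted first-order terms and the $\pm 2\sin 2s\cos 2s\cdot\partial_\theta$ coupling between the $f$- and $g$-components. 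Second, the magnetic part: the extra components $A_1,A_2\sim \epsilon(f_s,g_s,f_\theta,g_\theta)V/\tilde r$ contribute to both $d^*dA$ and to $d^*A$ inside $\Delta_A\psi$, and one must check that their projections against $-\partial_3A+\nabla A_3$ and $-\partial_4A+\nabla A_4$ are consistent (no double counting) and that the gauge/Hodge algebra in $\mathbb{R}^4$ really collapses to the scalar Jacobi expression — here I would lean on the identity $\Delta_A\psi=\Delta\psi+id^*A\,\psi-2i\langle A,d\psi\rangle-|A|^2\psi$ recorded above and on $d^*A=0$ to the relevant order. Organizing this computation so that the universal vortex constant factors out cleanly (so that $L_H$ appears with exactly the coefficients of \eqref{Jacobi operator} and not some rescaled version) is the crux; the bookkeeping is tedious but, as the excerpt indicates, most of it is relegated to the Appendix, and the theorem statement only asks for the structural conclusion $L_H(f,g)=(G_1,G_2)$ with $(G_1,G_2)=O(\epsilon)$ of higher order in $(f,g)$.
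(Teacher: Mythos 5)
Your overall strategy is the same as the paper's: expand $F(u)$ in the Fermi coordinates, pair against the two approximate translation modes $(\partial_k\psi-iA_k\psi,\,-\partial_k A+\nabla A_k)$, use the vortex ODEs to kill the leading order, and collect the order-$\epsilon^3$ terms (curvature of $\Gamma_\epsilon$ plus derivatives of $f,g$) into $L_H(f,g)$ with an $O(\epsilon^4)$ remainder. Steps (1), (3), (4) and the two ``subtleties'' you flag (rotation of the frame $(m,n)$ with $s$ producing the $\cos 2s$-weighted terms and the $\partial_\theta$ coupling; the extra $dy^1,dy^2$ components of $A$) are exactly the points the paper's Lemmas \ref{lemma4.1}--\ref{lemma4.5} are designed to handle.

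There is, however, one concrete misstep: you propose to ``lean on $d^*A=0$ to the relevant order'' when treating $\Delta_A\psi=\Delta\psi+id^*A\,\psi-2i\langle A,d\psi\rangle-|A|^2\psi$. What is true is only that the flat-vortex part cancels ($\partial_3A_3+\partial_4A_4=0$ for the planar $+1$ vortex); in the Fermi coordinates $d^*A=O(\epsilon^2)$ (Lemma \ref{lemma4.2}), and since the whole projected equation lives at order $\epsilon^3$, the pairing $\langle -id^*A\,\psi,\ \partial_3\psi-iA_3\psi\rangle$ contributes at exactly the order of $L_H(f,g)$. In the paper's proof this term carries the weight $\int\pi(V')^2/\tilde r$ multiplying $-\rho^{-6}f_{ss}-\rho^{-2}f_{\theta\theta}-2\rho^{-4}\cos 2s\,f_s-2\rho^{-6}f+\rho^{-2}\cos 2s\,g_\theta$, and note that its $g_\theta$ coefficient ($+1$) differs from the pattern of the second-order terms ($-1$): only after adding the $-\Delta\psi$, $2i\langle A,d\psi\rangle$, $|A|^2\psi$ and $d^*dA-\mathrm{Im}(\nabla_A\psi\cdot\bar\psi)$ contributions do the relative coefficients reassemble into those of \eqref{Jacobi operator} with the single overall constant $\int\pi\tilde r(U')^2\,d\tilde r+\pi\frac{V'}{\tilde r}\big|_{\tilde r=0}-3\int\pi\frac{(V')^2}{\tilde r}\,d\tilde r$. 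Discarding the $d^*A$ term would therefore not merely rescale $L_H$; it would distort the ratio between the $\cos 2s\,\partial_\theta$ coupling and the principal part, so the projected equation would no longer be a multiple of the Jacobi operator. You must keep this term and compute it to order $\epsilon^3$ along with the others.
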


	Before we prove Theorem \ref{Jacobi}, we first give several lemmas, whose proof can be found in the Appendix, since again they follows from tedious but routine computations.
	
	Denote $dA = \Sigma_{j<k} A_{jk}dy^j\wedge dy^k$, then we have:
	\begin{equation*}
		\begin{aligned}
			A_{jk} &= \partial_j A_k - \partial_k A_j,\\
			A_{12} &= O(\epsilon^2),\\
			A_{13} &= \epsilon g_s(\partial_3A_4 - \partial_4A_3),\\
			A_{14} &= -\epsilon f_s(\partial_3 A_4 - \partial_4A_3),\\
			A_{23} & = \epsilon g_\theta(\partial_3A_4 - \partial_4A_3),\\
			A_{24} &= -\epsilon f_\theta(\partial_3A_4 - \partial_4A_3),\\
			A_{34} &= \partial_3A_4 - \partial_4A_3 = \frac{V'}{r}. \\
		\end{aligned}
	\end{equation*}
	
	\begin{lemma}\label{lemma4.1}
	There holds
		\begin{equation*}
		\begin{aligned}
			 &-\Delta\psi = 2\epsilon^2(\sin2s)^2\cos2s(\epsilon f_s\partial_3\psi + \epsilon g_s \partial_4\psi ) + \epsilon^2(\sin2s)^3(\epsilon f_{ss}\partial_3\psi + \epsilon g_{ss} \partial_4\psi ) + \\
		&\epsilon^2\sin2s(\epsilon f_{\theta\theta}\partial_3\psi + \epsilon g_{\theta\theta}\partial_4\psi) + 2b\epsilon^2\sin2s\cos2s(\epsilon f_\theta \partial_3\partial_3\psi + \epsilon g_{\theta}\partial_3\partial_4\psi) + \\
		&2a\epsilon^2\sin2s\cos2s(-\epsilon f_\theta \partial_3\partial_4\psi - \epsilon g_\theta \partial_4\partial_4\psi) + 2a\epsilon^2(\sin2s)^3\partial_3\psi - \partial_3\partial_3\psi - \\
		&b^2\epsilon^2\sin2s(\cos2s)^2\partial_3\partial_3\psi + a\epsilon^2\sin2s(\cos2s)^2\partial_3\psi + 2ab\epsilon^2\sin2s(\cos2s)^2\partial_3\partial_4\psi +\\
		&b\epsilon^2\sin2s(\cos2s)^2\partial_4\psi + 2b\epsilon^2(\sin2s)^3\partial_4\psi - \partial_4\partial_4\psi - a^2\epsilon^2\sin2s(\cos2s)^2\partial_4\partial_4\psi  + \\
		&O(\epsilon^{4}).\\
		\end{aligned}
	\end{equation*}
	\end{lemma}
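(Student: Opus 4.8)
The plan is to rewrite the flat Laplacian $\Delta$ of $\mathbb R^{4}$ in the Fermi coordinates $(s,\theta,a,b)$ of Section 3 and then differentiate $\psi=\psi^{(1)}(a-\epsilon f,b-\epsilon g)$ by the chain rule. Concretely, I would first differentiate the parametrization $Y=\Gamma_{\epsilon}+am+bn$ to obtain the Jacobian $DY$, using the explicit formulas of Section 3 for $\Gamma$, $\partial_{s}\Gamma$, $\partial_{\theta}\Gamma$, $m$, $n$ and their tangential derivatives; equivalently I would compute the pulled-back metric $g=(DY)^{\top}DY$. One may then either invert $DY$ to express each $\partial_{y^{j}}$ as a combination of $\partial_{s},\partial_{\theta},\partial_{a},\partial_{b}$ and plug into $\Delta=\sum_{j}\partial_{y^{j}}^{2}$, or — more systematically — use $\Delta=\frac{1}{\sqrt{\det g}}\,\partial_{i}\bigl(\sqrt{\det g}\,g^{ij}\partial_{j}\bigr)=g^{ij}\partial_{i}\partial_{j}+\frac{1}{\sqrt{\det g}}\partial_{i}(\sqrt{\det g}\,g^{ij})\,\partial_{j}$ and expand $g^{ij}$ and $\log\sqrt{\det g}$. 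Either way, the whole computation reduces to an $\epsilon$-expansion of the metric.

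The key is that the blocks of $g$ have fixed orders in $\epsilon$: since $\Gamma_{\epsilon}=\epsilon^{-1}\Gamma$, the tangential block $(g_{ss},g_{s\theta},g_{\theta\theta})$ is $\epsilon^{-2}$ times that of $\Gamma$ plus an $a,b$-dependent correction whose linear part carries the second fundamental form of $\Gamma_{\epsilon}$ (of size $\epsilon^{-1}$) and whose quadratic part is of size $\epsilon^{0}$; the normal block is the identity ($g_{aa}=g_{bb}=1$, $g_{ab}=0$); and the mixed block is linear in $(a,b)$ with coefficients given by the connection of the normal frame $\{m,n\}$ along $\Gamma$. Inverting and expanding, the leading part of $\Delta$ is $\partial_{a}^{2}+\partial_{b}^{2}=\partial_{3}^{2}+\partial_{4}^{2}$, and all further contributions are $O(\epsilon^{2})$. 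They split as follows. The intrinsic Laplacian $\epsilon^{2}\Delta_{\Gamma}$, whose expression $(\sin2s)^{3}\partial_{s}^{2}+2(\sin2s)^{2}\cos2s\,\partial_{s}+\sin2s\,\partial_{\theta}^{2}$ is exactly the Laplacian part of the Jacobi operator in \eqref{Jacobi operator}, acts through the $(s,\theta)$-dependence of $\psi$; using $\partial_{s}\psi=-\epsilon f_{s}\partial_{3}\psi-\epsilon g_{s}\partial_{4}\psi$, $\partial_{s}^{2}\psi=-\epsilon f_{ss}\partial_{3}\psi-\epsilon g_{ss}\partial_{4}\psi+O(\epsilon^{2})$, and the analogous $\theta$-identities, this yields precisely the three displayed groups containing $f_{ss},g_{ss}$, then $f_{s},g_{s}$, then $f_{\theta\theta},g_{\theta\theta}$. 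The mixed coefficients $g^{\theta a},g^{\theta b}$ of order $\epsilon^{2}$, applied to $\partial_{\theta}\partial_{a}\psi$ and $\partial_{\theta}\partial_{b}\psi$, give the two groups proportional to $\sin2s\cos2s$. The $(a,b)$-quadratic corrections to $g^{aa},g^{bb},g^{ab}$ give the $a^{2},b^{2},ab$ terms proportional to $\sin2s(\cos2s)^{2}$. Finally, the $a,b$-derivatives of $\log\sqrt{\det g}$ and of the normal block of $g^{ij}$ supply the remaining terms linear in $(a,b)$; here the minimality of $\Gamma$ is essential — the first-order-in-$(a,b)$ variation of $\det g$ equals (minus) the mean curvature of $\Gamma_{\epsilon}$, which vanishes, so that no $O(\epsilon)$ term survives and everything not listed is $O(\epsilon^{4})$. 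Negating then gives the stated identity.

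The main difficulty is purely the amount of bookkeeping: one must carry the $\epsilon$-expansion of $g_{ij}$, $g^{ij}$ and $\sqrt{\det g}$ through two orders, keep track of the pieces annihilated by the minimality ($H\equiv0$), which is the only structural input beyond calculus, and check that each surviving trigonometric factor $(\sin2s)^{3}$, $\sin2s\cos2s$, $\sin2s(\cos2s)^{2}$ matches the corresponding geometric quantity of the Arezzo--Pacard surface; this is the tedious-but-routine computation the paper defers to the Appendix, and it is cleanest to separate the contributions of $g^{ij}\partial_{i}\partial_{j}\psi$ from those of the first-order part $\frac{1}{\sqrt{\det g}}\partial_{i}(\sqrt{\det g}\,g^{ij})\,\partial_{j}\psi$ and, within each, to organize by powers of $(a,b)$. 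A secondary point is the precise meaning of the $O(\epsilon^{4})$ remainder: the coordinate is only valid on $a^{2}+b^{2}<\frac{1}{\epsilon^{2}\sin2s}$, so $\epsilon^{2}a^{2}$ and $\epsilon^{2}b^{2}$ are not uniformly small; but $\psi^{(1)}$ and all its derivatives are bounded, with $U,V\to1$ exponentially as $\tilde r\to\infty$ and $U\sim cr$, $V\sim dr^{2}$ near the origin, so the discarded terms are $O(\epsilon^{4})$ in the weighted norms used in the later sections, which is what is needed downstream.
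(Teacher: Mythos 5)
Your proposal is correct and follows essentially the same route as the paper's Appendix: the paper likewise writes $\Delta=\sum_{i,j}\frac{1}{\sqrt G}\partial_i(\sqrt G\,g^{ij}\partial_j)$ in the Fermi coordinates, expands the explicitly computed $g^{ij}$ and $\partial_i(\sqrt G)/\sqrt G$ in powers of $\epsilon$ (keeping only the $O(\epsilon^3)$ terms thanks to the exponential decay of the vortex profile), and then applies the chain rule to $\psi^{(1)}(a-\epsilon f,b-\epsilon g)$. Your structural remarks — the vanishing of the $O(\epsilon)$ term via minimality and the justification of the $O(\epsilon^4)$ remainder through the decay of $U,V$ — are consistent with the paper's expansions and need no change.
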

	
	\begin{lemma}\label{lemma4.2}
	We have:
		\begin{equation*}
		\begin{aligned}
			d^*A &= \epsilon^2\rho^{-6}\cdot(\epsilon f_{ss}\cdot A_3 + \epsilon g_{ss}\cdot A_4) + \epsilon^2\rho^{-2}\cdot(\epsilon f_{\theta\theta} A_3 + \epsilon g_{\theta\theta}A_4) \\
			&+2b\cos2s\cdot\epsilon^3\rho^{-2}f_\theta\partial_3A_3 + b\cos2s\cdot\epsilon^3\rho^{-2}g_\theta(\partial_4A_3 +\partial_3A_4) \\
			&-a\cos2s\cdot\epsilon^3\rho^{-2}f_\theta(\partial_3A_4 +\partial_4A_3) - 2a\cos2s\cdot\epsilon^3\rho^{-2}g_\theta\partial_4A_4\\
			&-(-ab(\cos2s)^2\epsilon^2\rho^{-2} + 2a^2b(\cos2s)^2\epsilon^3\rho^{-5})(\partial_4A_3+\partial
			_3A_4)\\
			& -(1+b^2(\cos2s)^2\epsilon^2\rho^{-2} - 2ab^2(\cos2s)^2\epsilon^3\rho^{-5})\partial_3A_3\\
			& - (1+a^2(\cos2s)^2\epsilon^2\rho^{-2}-2a^3(\cos2s)^2\epsilon^3\rho^{-5})\partial_4A_4\\
			& + 2\epsilon^2\rho^{-4}\cos2s\cdot(\epsilon f_sA_3 + \epsilon g_sA_4)\\
			&+(2a\epsilon^2\rho^{-6}  + a(\cos2s)^2\epsilon^2\rho^{-2}   - (2a^2 + 4b^2)(\cos2s)^2\epsilon^3\rho^{-5} + 4b^2\epsilon^3\rho^{-9})A_3\\
			& +(2b\epsilon^2\rho^{-6} + b(\cos2s)^2\epsilon^2\rho^{-2}  + 2ab(\cos2s)^2\epsilon^3\rho^{-5} - 4ab\epsilon^3\rho^{-9})A_4 + O(\epsilon^{4}).\\
		\end{aligned}
	\end{equation*}
	\end{lemma}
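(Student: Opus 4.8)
The plan is to compute $d^{\ast}A$ directly from its definition. With the sign convention $d^{\ast}=(-1)^{n(k+1)+1}\ast d\ast$ fixed at the beginning of the paper, the codifferential of a one-form on a four-manifold is the covariant divergence
\[
d^{\ast}A=-\frac{1}{\sqrt{G}}\,\partial_{i}\!\big(\sqrt{G}\,G^{ij}A_{j}\big),\qquad G:=\det(G_{ij}),
\]
where $G_{ij}=\langle\partial_{i}Y,\partial_{j}Y\rangle$ is the Euclidean metric of $\mathbb R^{4}$ expressed in the Fermi coordinates $(y^{1},y^{2},y^{3},y^{4})=(s,\theta,a,b)$ of Section 3 and $(G^{ij})$ is its inverse. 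Thus the substance of the lemma is an expansion of $G_{ij}$, $\sqrt{G}$ and $G^{ij}$ in powers of $\epsilon$, followed by substitution of the explicit components of $A$ and bookkeeping of orders.

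The first step is to compute the Gram matrix of $\partial_{s}Y,\partial_{\theta}Y,\partial_{a}Y,\partial_{b}Y$ from $Y=\Gamma_{\epsilon}+am+bn$, using the tangent vectors $\partial_{s}\Gamma=-e^{-is}(\sin2s)^{-3/2}\Theta$, $\partial_{\theta}\Gamma=e^{is}(\sin2s)^{-1/2}\Theta^{\perp}$ and the orthonormal normal frame $m=ie^{-is}\Theta$, $n=ie^{is}\Theta^{\perp}$. Since $\partial_{a}Y=m$ and $\partial_{b}Y=n$, the $(a,b)$-block of $G_{ij}$ is exactly the identity; the mixed entries $\langle\partial_{s}Y,m\rangle,\langle\partial_{\theta}Y,m\rangle,\langle\partial_{s}Y,n\rangle,\langle\partial_{\theta}Y,n\rangle$ come from differentiating $m,n$ (the normal connection of $\Gamma$) and are explicit trigonometric functions of $s$ times polynomials of degree at most one in $a,b$ — the source of the $a\cos2s,\,b\cos2s$ patterns in the statement; and the $(s,\theta)$-block equals $\epsilon^{-2}$ times the induced metric of the $\Gamma$-normal graph at height $(\epsilon a,\epsilon b)$, which by the standard expansion of the metric on a normal graph is $\epsilon^{-2}\operatorname{diag}(\rho^{6},\rho^{2})$ plus $\epsilon^{-1}$ times a term linear in $a,b$ built from the second fundamental form of $\Gamma$, plus $O(1)$. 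From here one expands $\sqrt{G}$ and the inverse $G^{ij}$ as power series in $\epsilon$; because the tangential block carries $\epsilon^{-2}$, the $(s,s)$ and $(\theta,\theta)$ entries of $G^{ij}$ carry $\epsilon^{2}$, which yields the uniform $\epsilon^{2}$ prefactor in front of $f_{ss},g_{ss},f_{\theta\theta},g_{\theta\theta}$ in the conclusion; everything through order $\epsilon^{3}$ must be retained, only genuine $O(\epsilon^{4})$ remainders being dropped at the end. The computation of $-\Delta\psi$ in Lemma \ref{lemma4.1} is organized in the same way and serves as a template.

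The second step is to substitute the explicit components $A_{3}=-\tfrac{V}{\tilde r}\sin\tilde\phi$, $A_{4}=\tfrac{V}{\tilde r}\cos\tilde\phi$, $A_{1}=-\epsilon f_{s}A_{3}-\epsilon g_{s}A_{4}$, $A_{2}=-\epsilon f_{\theta}A_{3}-\epsilon g_{\theta}A_{4}$ into the divergence formula and expand, using three structural facts: (i) $f,g$ depend on $(s,\theta)$ only, so $\partial_{a}f=\partial_{b}f=\partial_{a}g=\partial_{b}g=0$; (ii) $A_{3},A_{4}$ depend on $(s,\theta,a,b)$ only through $t_{1}=a-\epsilon f$ and $t_{2}=b-\epsilon g$, so that $\partial_{a}$ acts on them as the flat vortex derivative $\partial_{3}$, $\partial_{b}$ as $\partial_{4}$, while $\partial_{s}=-\epsilon f_{s}\partial_{3}-\epsilon g_{s}\partial_{4}$ and $\partial_{\theta}=-\epsilon f_{\theta}\partial_{3}-\epsilon g_{\theta}\partial_{4}$ when applied to them; and (iii) the $+1$ vortex is in Coulomb gauge, $\partial_{3}A_{3}+\partial_{4}A_{4}=0$, so that the naively leading term $-\partial_{3}A_{3}-\partial_{4}A_{4}$ cancels and the remainder is genuinely of correction size. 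Collecting the surviving terms by order in $\epsilon$ and by the polynomials in $a,b$ then matches the asserted identity up to $O(\epsilon^{4})$.

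The main obstacle is not conceptual but the volume and delicacy of the bookkeeping: one must produce the Fermi-coordinate metric and its inverse correctly through third order in $\epsilon$ (an $O(\epsilon^{3})$ error in $G^{ij}$ would corrupt terms that have to be kept), carry the non-small polynomial dependence on $a,b$ through every product and differentiation, and keep the Hodge-star and codifferential sign conventions in dimension four consistent throughout. This is exactly why the authors relegate the computation to the Appendix.
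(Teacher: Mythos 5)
Your proposal is correct and follows essentially the same route as the paper: the Appendix computes the Fermi-coordinate metric $(g_{ij})$, its inverse, and the quantities $\partial_j(\sqrt G)/\sqrt G$ and $B^j=-\partial_kg^{kj}-\frac{\partial_k(\sqrt G)}{\sqrt G}g^{jk}$ through order $\epsilon^3$, and then obtains the lemma from the covariant-divergence formula $d^*A=-g^{kl}\partial_lA_k+B^jA_j$, exactly as you describe. The only cosmetic difference is your step (iii): the paper's stated formula retains $-(1+\cdots)\partial_3A_3-(1+\cdots)\partial_4A_4$ without invoking the Coulomb-gauge cancellation $\partial_3A_3+\partial_4A_4=0$, so that identity is not needed to arrive at the asserted expansion.
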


	\begin{lemma}\label{lemma4.3}
	We have:
		\begin{equation*}
		\begin{aligned}
			<A, d\psi> &=  -2b\cos2s\cdot\epsilon^3\rho^{-2}f_\theta A_3\partial_3\psi - b\cos2s\cdot\epsilon^3\rho^{-2}g_\theta (A_3\partial_4\psi + A_4\partial_3\psi) \\
			&+a\cos2s\cdot\epsilon^{3}\rho^{-2}f_\theta (A_4\partial_3\psi + A_3\partial_4\psi) + 2a\cos2s\cdot\epsilon^{3}\rho^{-2}g_\theta A_4\partial_4\psi\\
			&+(1+b^2(\cos2s)^2\cdot\epsilon^2\rho^{-2} -2ab^2(\cos2s)^2\cdot\epsilon^3\rho^{-5})A_3\cdot\partial_3\psi \\
			&+ (-ab(\cos2s)^2\cdot\epsilon^2\rho^{-2} +2a^2b(\cos2s)^2\cdot\epsilon^3\rho^{-5})(A_3\partial_4\psi + A_4\partial_3\psi) +\\
			& +(1+a^2(\cos2s)^2\cdot\epsilon^2\rho^{-2} - 2a^3(\cos2s)^2\cdot\epsilon^3\rho^{-5})A_4\partial_4\psi\\
			&  + O(\epsilon^{4}).\\
		\end{aligned}
	\end{equation*}
	\end{lemma}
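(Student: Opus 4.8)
The plan is to recognize that $\langle A, d\psi\rangle$ is simply the pointwise pairing of the two $1$-forms $A$ and $d\psi$, so that in the Fermi chart $(s,\theta,a,b)$ it equals $\sum_{i,j} G^{ij} A_i\,\partial_j\psi$, where $(G^{ij})$ is the inverse of the induced metric $(G_{ij})$ of the chart ($i,j$ ranging over $s,\theta,a,b$; this metric tensor $G_{ij}$ is not to be confused with the scalar $g$ nor with $G_1,G_2$ of Theorem \ref{Jacobi}). Both slots are already known from the definition of the approximate solution: $A_1=-\epsilon f_s A_3-\epsilon g_s A_4$, $A_2=-\epsilon f_\theta A_3-\epsilon g_\theta A_4$ with $A_3,A_4$ the $+1$ vortex components, and likewise $\partial_s\psi=-\epsilon f_s\partial_3\psi-\epsilon g_s\partial_4\psi$, $\partial_\theta\psi=-\epsilon f_\theta\partial_3\psi-\epsilon g_\theta\partial_4\psi$, while $\partial_a\psi=\partial_3\psi$ and $\partial_b\psi=\partial_4\psi$. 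Hence the entire content of the lemma is an expansion of the relevant entries of $(G^{ij})$ up to order $\epsilon^3$.

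First I would compute the metric. Differentiating $Y=\Gamma_\epsilon+am+bn$ gives $\partial_a Y=m$, $\partial_b Y=n$, and $\partial_s Y,\partial_\theta Y$ through the structure equations of the orthonormal frame $\{e_1,e_2,m,n\}$ along $\Gamma$. A direct differentiation of the explicit expressions for $m,n$ yields $\partial_s m=e_1$, $\partial_s n=-e_2$, $\partial_\theta m=\sin 2s\,e_2+\cos 2s\,n$, $\partial_\theta n=\sin 2s\,e_1-\cos 2s\,m$, together with $\partial_s\Gamma_\epsilon=-\epsilon^{-1}\rho^3 e_1$ and $\partial_\theta\Gamma_\epsilon=\epsilon^{-1}\rho\,e_2$ (recall $\rho=(\sin 2s)^{-1/2}$, so that $\rho^2\sin 2s=1$). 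Using orthonormality, the nonzero entries come out as $G_{aa}=G_{bb}=1$, $G_{\theta a}=-b\cos 2s$, $G_{\theta b}=a\cos 2s$, $G_{s\theta}=-2b\rho/\epsilon$, together with the stretched tangential entries $G_{ss}=(a-\rho^3/\epsilon)^2+b^2$ and $G_{\theta\theta}=(\rho/\epsilon+a\sin 2s)^2+a^2\cos^2 2s+b^2$; crucially $G_{sa}=G_{sb}=G_{ab}=0$.

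Next I would invert by a Schur-complement decomposition into the tangential block $P=(G_{ij})_{i,j\in\{s,\theta\}}$, which is $O(\epsilon^{-2})$, and the normal block, which is the identity, coupled through the off-diagonal block $Q$ carrying the entries $G_{\theta a},G_{\theta b}$. Writing the normal part of the inverse as $(I-Q^{T}P^{-1}Q)^{-1}$ and the mixed part as $-P^{-1}Q(I-Q^{T}P^{-1}Q)^{-1}$ and expanding each in a Neumann series, one needs only the $\theta\theta$ entry of $P^{-1}$ to the relevant order, since the $s$-row of $Q$ vanishes. A short computation shows $\det P=\rho^{8}\epsilon^{-4}\bigl(1+O(\epsilon^{2})\bigr)$ — the order $\epsilon^{-3}$ term cancels — whence $(P^{-1})_{\theta\theta}=\epsilon^{2}\rho^{-2}-2a\,\epsilon^{3}\rho^{-5}+O(\epsilon^{4})$ and $(P^{-1})_{s\theta}=2b\,\epsilon^{3}\rho^{-7}+O(\epsilon^{5})$. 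This gives $G^{aa}=1+b^{2}\cos^{2}2s\,(P^{-1})_{\theta\theta}$, $G^{bb}=1+a^{2}\cos^{2}2s\,(P^{-1})_{\theta\theta}$, $G^{ab}=-ab\cos^{2}2s\,(P^{-1})_{\theta\theta}$, together with the mixed entries $G^{\theta a}=b\cos 2s\,\epsilon^{2}\rho^{-2}+O(\epsilon^{4})$ and $G^{\theta b}=-a\cos 2s\,\epsilon^{2}\rho^{-2}+O(\epsilon^{4})$, while $G^{sa},G^{sb}$ and the purely tangential entries are of strictly higher relative order.

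Finally I would substitute into $\langle A,d\psi\rangle=\sum_{i,j}G^{ij}A_i\partial_j\psi$ and collect. The normal-block entries $G^{aa},G^{bb},G^{ab}$ acting on $A_3\partial_3\psi$, $A_4\partial_4\psi$ and the symmetric combination $A_3\partial_4\psi+A_4\partial_3\psi$ reproduce the last three lines of the asserted formula once $(P^{-1})_{\theta\theta}=\epsilon^{2}\rho^{-2}-2a\,\epsilon^{3}\rho^{-5}$ is inserted, generating precisely the $\epsilon^{2}\rho^{-2}$ and $\epsilon^{3}\rho^{-5}$ coefficients. The mixed entries $G^{\theta a},G^{\theta b}$ contract against the symmetric pairs $A_2\partial_3\psi+A_3\partial_\theta\psi$ and $A_2\partial_4\psi+A_4\partial_\theta\psi$; inserting $A_2=-\epsilon f_\theta A_3-\epsilon g_\theta A_4$ and $\partial_\theta\psi=-\epsilon f_\theta\partial_3\psi-\epsilon g_\theta\partial_4\psi$ promotes these $O(\epsilon^{2})$ coefficients to the $O(\epsilon^{3})$ terms containing $f_\theta,g_\theta$ in the first two lines. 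Because $A_1,A_2,\partial_s\psi,\partial_\theta\psi$ are each already $O(\epsilon)$, the contributions of $G^{ss},G^{\theta\theta},G^{s\theta},G^{sa},G^{sb}$ are all $O(\epsilon^{4})$ and fall into the remainder. I expect the main obstacle to be exactly the bookkeeping of these subleading inverse-metric coefficients — in particular verifying the order $\epsilon^{-3}$ cancellation in $\det P$ and tracking the $-2a\,\epsilon^{3}\rho^{-5}$ correction to $(P^{-1})_{\theta\theta}$, which is what produces every $\rho^{-5}$ term in the statement.
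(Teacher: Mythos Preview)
Your proposal is correct and follows the same route as the paper: write $\langle A,d\psi\rangle=\sum_{i,j}g^{ij}A_i\,\partial_j\psi$ in the Fermi chart, use the inverse-metric expansions (the paper lists these in the appendix) together with $A_1,A_2,\partial_s\psi,\partial_\theta\psi=O(\epsilon)$, and keep terms through order $\epsilon^3$. The paper's own proof in Appendix~B.3 is essentially a restatement of the formula with no details, so your Schur-complement derivation of the relevant $g^{ij}$ entries in fact supplies the computation the paper omits.
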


	\begin{lemma}\label{lemma4.4}
		If we denote $d^*dA = (d^*dA)_\tau dy^\tau$, then we have:
		\begin{equation*}
		\begin{aligned}			
			(d^*dA)_1 &= -\partial_3A_{31} - \partial_4A_{41} + O(\epsilon^{2})	\\
			&= \cos\tilde\phi \cdot\epsilon g_s(\frac{V''}{\tilde r} - \frac{V'}{\tilde r^2}) - \sin\tilde\phi\cdot\epsilon f_s(\frac{V''}{\tilde r} - \frac{V'}{\tilde r^2})\\
			&+O(\epsilon^2),\\
			(d^*dA)_2 &= -\partial_3A_{32} - \partial_4A_{42} + O(\epsilon^{2})	\\
			& = \cos\tilde\phi \cdot\epsilon g_\theta(\frac{V''}{r} - \frac{V'}{r^2}) - \sin\tilde\phi\cdot\epsilon f_\theta(\frac{V''}{r} - \frac{V'}{r^2})\\
			&+O(\epsilon^2),\\
		\end{aligned}
	\end{equation*}
	\begin{equation*}
		\begin{aligned}
			(d^*dA)_3 &= -\epsilon^2\rho^{-6}\partial_1A_{13}
			-\epsilon^2\rho^{-2}\partial_2A_{23} - b\cos2s\cdot\epsilon^2\rho^{-2} \partial_3A_{23}\\
			&+a\cos2s\cdot\epsilon^2\rho^{-2}\partial_4A_{23}   +a\cos2s\cdot\epsilon^2\rho^{-2}\partial_2A_{43} \\
			&-(-ab(\cos2s)^2\epsilon^2\rho^{-2}  + 2a^2b(\cos2s)^2\epsilon^3\rho^{-5} )\partial_3A_{43}\\
			&-(1+a^2(\cos2s)^2\epsilon^2\rho^{-2} - 2a^3(\cos2s)^2\epsilon^3\rho^{-5} )\partial_4A_{43} \\
			&-2\epsilon^2\rho^{-4}\cos2s \cdot A_{13} + (\epsilon^4) \\
			&+(2b\epsilon^2\rho^{-6} + b(\cos2s)^2\epsilon^2\rho^{-2}  + 2ab(\cos2s)^2\epsilon^3\rho^{-5} - 4ab\epsilon^3\rho^{-9})A_{43} + O(\epsilon^4),\\
		\end{aligned}
	\end{equation*}
	
	\begin{equation*}
		\begin{aligned}
			(d^*dA)_4 &= -\epsilon^2\rho^{-6}\cdot\partial_1A_{14}
			 -\epsilon^2\rho^{-2}\partial_2A_{24} - b\cos2s\cdot\epsilon^2\rho^{-2}\partial_3A_{24}\\
			&+a\cos2s\cdot\epsilon^2\rho^{-2}\partial_4A_{24}    -b\cos2s\cdot\epsilon^2\rho^{-2}\partial_2A_{34}\\
			&-(1+b^2(\cos2s)^2\cdot\epsilon^2\rho^{-2} -2ab^2(\cos2s)^2\cdot\epsilon^3\rho^{-5})\partial_3A_{34}\\
			&-(-ab(\cos2s)^2\cdot\epsilon^2\rho^{-2} +2a^2b(\cos2s)^2\cdot\epsilon^3\rho^{-5})\partial_4A_{34}\\
			&+ (-2\epsilon^2\rho^{-4}\cos2s )A_{14} +O(\epsilon^4)   \\
			&+(2a\epsilon^2\rho^{-6}  + a(\cos2s)^2\epsilon^2\rho^{-2}   - (2a^2 + 4b^2)(\cos2s)^2\epsilon^3\rho^{-5} + 4b^2\epsilon^3\rho^{-9})A_{34} + O(\epsilon^4).\\
		\end{aligned}
	\end{equation*}
	\end{lemma}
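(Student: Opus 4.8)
The plan is to read $d^{*}dA$ off the coordinate expression of the codifferential in the Fermi coordinates $(y^{1},y^{2},y^{3},y^{4})=(s,\theta,a,b)$ coming from $Y=\Gamma_{\epsilon}+am+bn$. Since every operator in this section is taken with respect to the pulled-back metric $G_{\mu\nu}:=\partial_{\mu}Y\cdot\partial_{\nu}Y$, the starting identity is, for a $2$-form $\omega=\sum_{j<k}\omega_{jk}\,dy^{j}\wedge dy^{k}$,
\begin{equation*}
(d^{*}\omega)_{\tau}=-\frac{1}{\sqrt{|G|}}\,\partial_{\mu}\!\big(\sqrt{|G|}\,G^{\mu\nu}\omega_{\nu\tau}\big)+G^{\mu\nu}\,\Gamma^{\kappa}_{\mu\tau}\,\omega_{\nu\kappa},
\end{equation*}
applied with $\omega=dA=\sum_{j<k}A_{jk}\,dy^{j}\wedge dy^{k}$, the $A_{jk}$ being the ones listed just before Lemma \ref{lemma4.1} (so $A_{34}=V'/\tilde r$ while every other $A_{jk}$ is $O(\epsilon)$, with $A_{12}=O(\epsilon^{2})$). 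The lemma then amounts to expanding $G$, $G^{-1}$, $\sqrt{|G|}$ and the Christoffel symbols in powers of $\epsilon$ — equivalently in the small ratios $a\epsilon\rho^{-3}$, $b\epsilon\rho^{-1}$, which are $O(\epsilon)$ on the support of the vortex by the exponential decay in property $3$ — substituting, and keeping terms through order $\epsilon^{3}$ while discarding $O(\epsilon^{4})$.

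First I would pin down the metric. Differentiating $Y=\Gamma_{\epsilon}+am+bn$ and using $\partial_{s}\Gamma_{\epsilon}=-\rho^{3}\epsilon^{-1}e_{1}$, $\partial_{\theta}\Gamma_{\epsilon}=\rho\epsilon^{-1}e_{2}$ together with the frame derivatives $\partial_{s}m=e_{1}$, $\partial_{s}n=-e_{2}$, $\partial_{\theta}m=\sin 2s\,e_{2}+\cos 2s\,n$, $\partial_{\theta}n=\sin 2s\,e_{1}-\cos 2s\,m$ (all immediate from $m=ie^{-is}\Theta$, $n=ie^{is}\Theta^{\perp}$) and $\rho^{2}\sin 2s=1$, one gets $\partial_{3}Y=m$, $\partial_{4}Y=n$, hence $G_{33}=G_{44}=1$, $G_{34}=G_{13}=G_{14}=0$, $G_{23}=-b\cos 2s$, $G_{24}=a\cos 2s$, while $G_{11}=\rho^{6}\epsilon^{-2}-2a\rho^{3}\epsilon^{-1}+a^{2}+b^{2}$, $G_{22}=\rho^{2}\epsilon^{-2}+2a\rho\epsilon^{-1}\sin 2s+a^{2}+b^{2}$, $G_{12}=-2b\rho\epsilon^{-1}$. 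It follows that $\sqrt{|G|}=\rho^{4}\epsilon^{-2}(1+O(\epsilon))$ and that $G^{-1}$ has leading block $\mathrm{diag}(\epsilon^{2}\rho^{-6},\epsilon^{2}\rho^{-2},1,1)$ with off-diagonal entries $G^{23}=b\cos 2s\,\epsilon^{2}\rho^{-2}+O(\epsilon^{3})$, $G^{24}=-a\cos 2s\,\epsilon^{2}\rho^{-2}+O(\epsilon^{3})$, $G^{34}=-ab(\cos 2s)^{2}\epsilon^{2}\rho^{-2}+O(\epsilon^{3})$, $G^{33}=1+b^{2}(\cos 2s)^{2}\epsilon^{2}\rho^{-2}+O(\epsilon^{3})$, $G^{44}=1+a^{2}(\cos 2s)^{2}\epsilon^{2}\rho^{-2}+O(\epsilon^{3})$, etc.; the $\cos 2s$ appearing here is exactly the twisting of the normal frame of $\Gamma$.

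With this in hand the assembly is mechanical. For $\tau=1$: in $(d^{*}dA)_{1}$ only the $\mu\in\{3,4\}$ terms with $G^{33}=G^{44}=1$ survive at order $\epsilon$, namely $-\partial_{3}A_{31}-\partial_{4}A_{41}$; the $\mu=1$ term vanishes, the $\mu=2$ term is $O(\epsilon^{4})$ since $A_{12}=O(\epsilon^{2})$, and the $\sqrt{|G|}$– and Christoffel–corrections are $O(\epsilon^{2})$ (the Christoffels can be $O(\epsilon^{-1})$ but are always contracted against the small off-diagonal $G^{\mu\nu}$ or against the $O(\epsilon)$ entries of $dA$). Since $\partial_{a}\tilde r=\cos\tilde\phi+O(\epsilon)$ and $\partial_{b}\tilde r=\sin\tilde\phi+O(\epsilon)$, differentiating $A_{31}=-\epsilon g_{s}V'/\tilde r$ and $A_{41}=\epsilon f_{s}V'/\tilde r$ yields the claimed formula, and $(d^{*}dA)_{2}$ is identical with $(f_{s},g_{s})$ replaced by $(f_{\theta},g_{\theta})$. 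For $\tau=3$ (and symmetrically $\tau=4$) all indices contribute and one reads off the stated expression term by term: $-G^{11}\partial_{1}(dA)_{13}=-\epsilon^{2}\rho^{-6}\partial_{1}A_{13}$; $-G^{22}\partial_{2}(dA)_{23}=-\epsilon^{2}\rho^{-2}\partial_{2}A_{23}$; the entries $G^{23},G^{24},G^{34},G^{44}$ produce $-b\cos 2s\,\epsilon^{2}\rho^{-2}\partial_{3}A_{23}$, $a\cos 2s\,\epsilon^{2}\rho^{-2}(\partial_{4}A_{23}+\partial_{2}A_{43})$, $-(-ab(\cos 2s)^{2}\epsilon^{2}\rho^{-2}+\cdots)\partial_{3}A_{43}$ and $-(1+a^{2}(\cos 2s)^{2}\epsilon^{2}\rho^{-2}+\cdots)\partial_{4}A_{43}$; the variation of $\sqrt{|G|}\,G^{11}=\sin 2s+O(\epsilon)$, using $\partial_{s}\sin 2s=2\cos 2s$ and $\sqrt{|G|}^{-1}=\epsilon^{2}\rho^{-4}(1+O(\epsilon))$, gives the undifferentiated term $-2\epsilon^{2}\rho^{-4}\cos 2s\,A_{13}$; and the remaining metric-variation and Christoffel pieces (computed with $\partial_{s}\rho^{-6}=6\cos 2s\,\rho^{-4}$, $\partial_{s}\rho^{-2}=2\cos 2s$, and the formulas above for $G_{23},G_{24}$) collapse into the advertised coefficient $\big(2b\epsilon^{2}\rho^{-6}+b(\cos 2s)^{2}\epsilon^{2}\rho^{-2}+\cdots\big)A_{43}$ modulo $O(\epsilon^{4})$, with $\tau=4$ handled the same way.

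The step I expect to be the real work is this last one: inverting $G$ and expanding $\sqrt{|G|}$ consistently to order $\epsilon^{3}$, and then checking that the ``$\sqrt{|G|}$-variation $+$ Christoffel'' remainder reorganizes \emph{exactly} into the stated undifferentiated-$A$ coefficients — every power of $\rho$, $a$, $b$ and $\epsilon$ in the right place — while simultaneously verifying that all discarded pieces are genuinely $O(\epsilon^{4})$ uniformly on the support of the vortex (which is where the exponential decay of $V,V'$ in property $3$ enters, to control the region $\tilde r\to\infty$). This is precisely the ``tedious but routine'' bookkeeping the authors relegate to the Appendix.
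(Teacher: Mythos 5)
Your proposal follows essentially the same route as the paper's appendix: compute $d^{*}dA$ from the coordinate formula for the codifferential in the Fermi coordinates, using the metric $(g_{ij})$ of $Y=\Gamma_\epsilon+am+bn$, its inverse expanded to order $\epsilon^{3}$, and the divergence/Christoffel corrections (your ``$\sqrt{|G|}$-variation'' terms are exactly the paper's $B^{j}=-\partial_{k}g^{kj}-\frac{\partial_{k}\sqrt{G}}{\sqrt{G}}g^{jk}$, e.g.\ $B^{1}=-2\epsilon^{2}\rho^{-4}\cos 2s+\cdots$ producing the undifferentiated $A_{13}$ term). Your metric entries, inverse-metric expansions, and term-by-term assembly all agree with the paper's computation, so the proposal is correct and matches the paper's proof.
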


	\begin{lemma}\label{lemma4.5}
		After calculation, we have:
		\begin{equation*}
		\begin{aligned}
			Im(\nabla_A\psi\cdot\bar{\psi})
			&= (\epsilon f_s \cdot \frac{U^2 - VU^2}{\tilde r}\sin\tilde\phi - \epsilon g_s\cdot\frac{U^2 - VU^2}{\tilde r}\cos\tilde\phi)dy^1\\
			&+ (\epsilon f_\theta \cdot \frac{U^2 - VU^2}{\tilde r}\sin\tilde\phi - \epsilon g_\theta\cdot\frac{U^2- VU^2}{\tilde r}\cos\tilde\phi)dy^2\\
			&+(-\frac{U^2}{\tilde r}\sin\tilde\phi - A_3|\psi|^2)dy^3
			+(\frac{U^2}{\tilde r}\cos\tilde\phi - A_4|\psi|^2)dy^4.\\
		\end{aligned}
	\end{equation*}	
	\end{lemma}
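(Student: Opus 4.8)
\emph{Proof strategy.} The statement is a direct computation with the explicitly defined approximate solution $(\psi,A)$, so the plan is simply to carry it out cleanly. Write $\psi=U(\tilde r)e^{i\tilde\phi}$, where by definition $\tilde r e^{i\tilde\phi}=t_1+it_2$ with $t_1=a-\epsilon f(s,\theta)$, $t_2=b-\epsilon g(s,\theta)$, and identify the coordinate one‑forms $dy^1,dy^2,dy^3,dy^4$ with $ds,d\theta,da,db$. The first step is to split the covariant derivative componentwise, $(\nabla_A\psi)_j=\partial_{y^j}\psi-iA_j\psi$, and to multiply by $\bar\psi=U(\tilde r)e^{-i\tilde\phi}$. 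Since $\partial_{y^j}\psi=(\partial_{y^j}U)e^{i\tilde\phi}+iU(\partial_{y^j}\tilde\phi)e^{i\tilde\phi}$, one gets
\begin{equation*}
(\partial_{y^j}\psi)\bar\psi=U\,\partial_{y^j}U+iU^2\,\partial_{y^j}\tilde\phi,\qquad -iA_j\psi\bar\psi=-iA_j|\psi|^2,
\end{equation*}
so the real part $U\,\partial_{y^j}U$ drops out and $\operatorname{Im}\big((\nabla_A\psi)_j\bar\psi\big)=U^2\big(\partial_{y^j}\tilde\phi-A_j\big)$, where we also used $|\psi|^2=U^2$.

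The second step is to evaluate $\partial_{y^j}\tilde\phi$ via the chain rule. Using the planar identity $d\phi=(-t_2\,dt_1+t_1\,dt_2)/(t_1^2+t_2^2)$ together with $\partial_a t_1=1$, $\partial_b t_2=1$, $\partial_s t_1=-\epsilon f_s$, $\partial_s t_2=-\epsilon g_s$, $\partial_\theta t_1=-\epsilon f_\theta$, $\partial_\theta t_2=-\epsilon g_\theta$ (and the complementary derivatives vanishing), one finds
\begin{equation*}
\partial_a\tilde\phi=-\frac{\sin\tilde\phi}{\tilde r},\quad \partial_b\tilde\phi=\frac{\cos\tilde\phi}{\tilde r},\quad \partial_s\tilde\phi=\frac{\epsilon f_s\sin\tilde\phi-\epsilon g_s\cos\tilde\phi}{\tilde r},\quad \partial_\theta\tilde\phi=\frac{\epsilon f_\theta\sin\tilde\phi-\epsilon g_\theta\cos\tilde\phi}{\tilde r}.
\end{equation*}
One then inserts the components of $A$ read off from its definition, namely $A_3=-\frac{V}{\tilde r}\sin\tilde\phi$, $A_4=\frac{V}{\tilde r}\cos\tilde\phi$, $A_1=\epsilon f_s\frac{V}{\tilde r}\sin\tilde\phi-\epsilon g_s\frac{V}{\tilde r}\cos\tilde\phi$, $A_2=\epsilon f_\theta\frac{V}{\tilde r}\sin\tilde\phi-\epsilon g_\theta\frac{V}{\tilde r}\cos\tilde\phi$, and combines slot by slot. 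For the $dy^3$ and $dy^4$ slots one keeps $U^2\partial_a\tilde\phi=-\frac{U^2}{\tilde r}\sin\tilde\phi$ and $U^2\partial_b\tilde\phi=\frac{U^2}{\tilde r}\cos\tilde\phi$, which together with $-A_j|\psi|^2$ gives precisely $-\frac{U^2}{\tilde r}\sin\tilde\phi-A_3|\psi|^2$ and $\frac{U^2}{\tilde r}\cos\tilde\phi-A_4|\psi|^2$. For the $dy^1$ and $dy^2$ slots, $\partial_s\tilde\phi-A_1$ and $\partial_\theta\tilde\phi-A_2$ each factor a common $(1-V)/\tilde r$; multiplying by $U^2$ turns $(1-V)U^2$ into $U^2-VU^2$ and produces the coefficients $\epsilon f_s\frac{U^2-VU^2}{\tilde r}\sin\tilde\phi-\epsilon g_s\frac{U^2-VU^2}{\tilde r}\cos\tilde\phi$ and the analogous $\theta$‑expression. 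Assembling the four slots yields the claimed one‑form.

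There is no genuine obstacle here beyond bookkeeping: the only points requiring care are the identification of $dy^j$ with the Fermi‑coordinate differentials and the consistent use of the chain rule through the shifted variables $t_1=a-\epsilon f$, $t_2=b-\epsilon g$; once these are fixed, every term matches. We note in particular that, unlike Lemmas~\ref{lemma4.1}--\ref{lemma4.4}, the identity here is \emph{exact}, with no $O(\epsilon^{k})$ remainder, because $\psi$ and $A$ are given by closed‑form expressions and no Taylor expansion of the metric coefficients of the Fermi coordinates enters the computation of $\operatorname{Im}(\nabla_A\psi\cdot\bar\psi)$.
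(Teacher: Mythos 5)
Your computation is correct and follows essentially the same route as the paper's: the paper likewise expands $\operatorname{Im}(\partial_{y^j}\psi\cdot\bar\psi)=\psi_1\partial_j\psi_2-\psi_2\partial_j\psi_1=U^2\partial_{y^j}\tilde\phi$, inserts the components $A_j$ of the approximate connection, and combines the $dy^1,dy^2$ slots into the common factor $(U^2-VU^2)/\tilde r$. Your observation that the identity is exact (no $O(\epsilon^k)$ remainder, since no metric expansion enters) is also consistent with the statement as given.
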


	We use the kernels $(\partial_k\psi - iA_k\psi, \partial_k A- \nabla A_k), k =3,4$ ,rather than $(\partial_k\psi , \partial_k A)$ because the inner products of them with the equations are invariant after gauge transformation.
	
	Now we can prove Theorem \ref{Jacobi}:
	\begin{proof}
	
	By Lemma \ref{lemma4.1}, we get:
	\begin{equation*}
		\begin{aligned}
		&<-\Delta \psi, \partial_3\psi - iA_3\psi> \\
		& =  [\epsilon^3\rho^{-6}f_{ss} + \epsilon^3\rho^{-2}f_{\theta\theta} + 2\epsilon^3\rho^{-4}\cos2s\cdot f_s - 2\epsilon^3\rho^{-2}\cos2s\cdot g_\theta \\
		& + 2\epsilon^3\rho^{-6}f - \epsilon^3 \rho^{-2}(\cos2s)^2f]\cdot [\int_{\tilde r} \pi \tilde r(U')^2d\tilde r + \pi\frac{V'}{\tilde r}|_{\tilde r=0})  \\
		& - (2\epsilon^3\rho^{-2}\cos2s\cdot g_\theta + 2\epsilon^3\rho^{-2}(\cos2s)^2\cdot f)\cdot\int_{\tilde r}(-2\pi UU' + \pi VUU')d\tilde r + O(\epsilon^4).
		\end{aligned}
	\end{equation*}

On the other hand, from Lemma \ref{lemma4.2}, we deduce
	\begin{equation*}
		\begin{aligned}
			&< -id^*A\psi, \partial_3\psi - iA_3\psi> = \int_{r,\phi} d^*A(\frac{U^2}{\tilde r} - \frac{VU^2}{\tilde r})\sin\tilde\phi\\
			& = \int_{r,\phi} d^*A\cdot \frac{\sin\tilde\phi}{\tilde r}(-V'' + \frac{V'}{\tilde r}) \\
			& =   \int_{\tilde r} \frac{\pi(V')^2}{\tilde r}\cdot(-\epsilon^3\rho^{-6}f_{ss} - \epsilon^3\rho^{-2}f_{\theta\theta} - 2\epsilon^3\rho^{-4}\cos2s\cdot f_s - 2\epsilon^3\rho^{-6}f + \epsilon^3\rho^{-2}\cos2s\cdot g_\theta )\\
			& + \int_{\tilde r} 4\pi \tilde rV(-V'' + \frac{V'}{\tilde r})(\cos2s)^2\epsilon^3\rho^{-5} - \int_{\tilde r}4\pi \tilde rV \epsilon^3\rho^{-9}(-V'' + \frac{V'}{\tilde r})+ O(\epsilon^4).
		\end{aligned}
	\end{equation*}
	
	By lemma \ref{lemma4.3}, we have:
	\begin{equation*}
		\begin{aligned}
			&<2i(A,d\psi), \partial_3\psi - iA_3\psi> \\
			&=\int_{\tilde r}\cos2s\cdot\epsilon^3\rho^{-2}g_\theta\cdot(-6\pi VUU' + 2\pi V^2UU')\\
			&+\int_{\tilde r} (\cos2s)^2\epsilon^3\rho^{-2}f\cdot(-6\pi VUU' +2\pi V^2UU')\\
			&+\int_{\tilde r} (\cos2s)^2\epsilon^3\rho^{-5}\cdot 4\pi \tilde r^2VUU'+ O(\epsilon^4).
		\end{aligned}
	\end{equation*}

	By direct calculation and using the metric matrix of Fermi coordinate, we get:
	\begin{equation*}
		\begin{aligned}
			&<|A|^2\psi, \partial_3\psi - iA_3\psi> = \int_{\tilde r,\tilde\phi} |A|^2UU'\cos\tilde\phi\\
			&= \int_{\tilde r,\tilde\phi} [ 2A_2A_3\cdot b\cos2s\cdot\epsilon^2\rho^{-2}  - 2A_2A_4\cdot a\cos2s\cdot\epsilon^2\rho^{-2}   \\
			&+A_3^2(1+b^2(\cos2s)^2\cdot\epsilon^2\rho^{-2} -2ab^2(\cos2s)^2\cdot\epsilon^3\rho^{-5})\\
			&+ 2A_3A_4(-ab(\cos2s)^2\cdot\epsilon^2\rho^{-2} +2a^2b(\cos2s)^2\cdot\epsilon^3\rho^{-5})\\
			&+A_4^2(1+a^2(\cos2s)^2\cdot\epsilon^2\rho^{-2} - 2a^3(\cos2s)^2\cdot\epsilon^3\rho^{-5})  ]\cdot UU'\cos\tilde\phi\\
			&=\int_{\tilde r} 2\pi V^2UU'\cdot(\cos2s)^2\cdot\epsilon^3\rho^{-2}f - \int_{\tilde r} 2\pi \tilde r^2V^2UU'(\cos2s)^2\epsilon^3\rho^{-5}\\
			& + \int_{\tilde r}2\pi V^2UU'\cdot\cos2s\cdot\epsilon^3\rho^{-2}g_\theta+ O(\epsilon^4).
		\end{aligned}
	\end{equation*}

	From Lemma \ref{lemma4.4} and Lemma \ref{lemma4.5}, we can get that:
	\begin{equation*}
		\begin{aligned}
			&<d^*dA - Im(\nabla_A\psi\cdot\bar{\psi}),\partial_3 A - \nabla A_3 > \\
			&= \int  (\partial_3A_4 - \partial_4A_3) \cdot [-\epsilon^2\rho^{-6}\cdot\partial_1A_{14}
			 -\epsilon^2\rho^{-2}\partial_2A_{24} - b\cos2s\cdot\epsilon^2\rho^{-2}\partial_3A_{24}\\
			&+a\cos2s\cdot\epsilon^2\rho^{-2}\partial_4A_{24}    -b\cos2s\cdot\epsilon^2\rho^{-2}\partial_2A_{34}\\
			&-(1+b^2(\cos2s)^2\cdot\epsilon^2\rho^{-2} -2ab^2(\cos2s)^2\cdot\epsilon^3\rho^{-5})\partial_3A_{34}\\
			&-(-ab(\cos2s)^2\cdot\epsilon^2\rho^{-2} +2a^2b(\cos2s)^2\cdot\epsilon^3\rho^{-5})\partial_4A_{34}\\
			&+ (-2\epsilon^2\rho^{-4}\cos2s )A_{14}    \\
			&+(2a\epsilon^2\rho^{-6}  + a(\cos2s)^2\epsilon^2\rho^{-2}   - (2a^2 + 4b^2)(\cos2s)^2\epsilon^3\rho^{-5} + 4b^2\epsilon^3\rho^{-9})A_{34} ]\\
			&= \int  (\frac{V'}{\tilde r} ) \cdot [ \epsilon^3\rho^{-6}f_{ss}\cdot\frac{V'}{\tilde r}
			 +\epsilon^3\rho^{-2}f_{\theta\theta}\cdot\frac{V'}{\tilde r}    +b\cos2s\cdot\epsilon^3\rho^{-2}g_\theta\sin\phi\cdot(\frac{V'}{\tilde r})' \\
			&-(1+b^2(\cos2s)^2\cdot\epsilon^2\rho^{-2} -2ab^2(\cos2s)^2\cdot\epsilon^3\rho^{-5})\partial_3A_{34}\\
			&-(-ab(\cos2s)^2\cdot\epsilon^2\rho^{-2} +2a^2b(\cos2s)^2\cdot\epsilon^3\rho^{-5})\partial_4A_{34}\\
			&+ (2\epsilon^3\rho^{-4}\cos2s )f_s\cdot\frac{V'}{\tilde r}   \\
			&+(2a\epsilon^2\rho^{-6}  + a(\cos2s)^2\epsilon^2\rho^{-2}   - (2a^2 + 4b^2)(\cos2s)^2\epsilon^3\rho^{-5} + 4b^2\epsilon^3\rho^{-9})A_{34}]\\
			&= \int_{\tilde r} \pi\epsilon^3 \frac{(V')^2}{\tilde r}\cdot(2\rho^{-6} f_{ss} + 2\rho^{-2} f_{\theta\theta} + 4\rho^{-4}\cos2s\cdot f_s \\
			& -\rho^{-2}\cos2s\cdot g_\theta + 4\rho^{-6}f +\rho^{-2}(\cos2s)^2f  - 6(\cos2s)^2\rho^{-5}\tilde r^2 +4\rho^{-9}\tilde r^2 )+ O(\epsilon^4).
		\end{aligned}
	\end{equation*}

	Finally, from the above results, we deduce:
	\begin{equation*}
		\begin{aligned}
			&<-\Delta_A\psi + \frac{\lambda}{2}(|\psi|^2-1)\psi, \partial_3\psi-iA_3\psi> - <d^*dA - Im(\nabla_A\psi\cdot\bar{\psi}),\partial_3 A - \nabla A_3 >\\
			&= [\epsilon^3\rho^{-6}f_{ss} + \epsilon^3\rho^{-2}f_{\theta\theta} + 2\epsilon^3\rho^{-4}\cos2s\cdot f_s - 2\epsilon^3\rho^{-2}\cos2s\cdot g_\theta \\
		& + 2\epsilon^3\rho^{-6}f - \epsilon^3 \rho^{-2}(\cos2s)^2f]\cdot [\int\pi \tilde r(U')^2d\tilde r + \pi\frac{V'}{\tilde r}|_{\tilde r=0} -3 \pi\frac{(V')^2}{\tilde r} ]  \\
		&+ \int_{\tilde r} \pi\epsilon^3 \frac{(V')^2}{\tilde r}\cdot( 6(\cos2s)^2\rho^{-5}\tilde r^2 - 4\rho^{-9}\tilde r^2 )\\
			& + \int_{\tilde r} 4\pi \tilde rV(-V'' + \frac{V'}{\tilde r})(\cos2s)^2\epsilon^3\rho^{-5}- \int_{\tilde r}4\pi \tilde rV \epsilon^3\rho^{-9}(-V'' + \frac{V'}{\tilde r})\\
			&+\int_{\tilde r} (\cos2s)^2\epsilon^3\rho^{-5}\cdot 4\pi \tilde r^2VUU'- \int_{\tilde r} (\cos2s)^2\epsilon^3\rho^{-5}\cdot 2\pi \tilde r^2V^2UU'+ O(\epsilon^4).
		\end{aligned}
	\end{equation*}
	
	\end{proof}
	
	Noting the exponential decay of $U$ and $V$, we can define the cut-off function $\chi$:
	\begin{equation*}
		\begin{cases}
			\chi = 1, \text{ if } a^2 + b^2 \le \frac{1}{4\epsilon^2\sin2s},\\
			\chi = 0, \text{ if } a^2 + b^2 \ge \frac{1}{2\epsilon^2\sin2s},
		\end{cases}
	\end{equation*}
	and get the conclusion:
	
	\begin{corollary}
		When $\epsilon$ is small enough, the equations
		\begin{equation*}
			\begin{cases}
				<F(u), \chi(\partial_3\psi-iA_3\psi, -\partial_3 A + \nabla A_3)> = 0,\\
				<F(u), \chi(\partial_4\psi-iA_4\psi, -\partial_4 A + \nabla A_4)> = 0,\\
			\end{cases}
		\end{equation*}
	has the form
		\begin{equation*}
			L_H(f, g)  = (M_1, M_2).
		\end{equation*}
	$	M_1,M_2=O(\epsilon)$ is of higher order in $f,g$.
			\end{corollary}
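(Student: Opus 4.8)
The plan is to reduce the Corollary to Theorem \ref{Jacobi}. Set $\kappa_k := (\partial_k\psi - iA_k\psi,\ -\partial_k A + \nabla A_k)$ for $k = 3,4$; the object of interest is the pair $\big(\langle F(u),\chi\kappa_3\rangle, \langle F(u),\chi\kappa_4\rangle\big)$. Since $\chi\equiv 1$ on $\{a^2+b^2\le \tfrac{1}{4\epsilon^2\sin 2s}\}$ and $\operatorname{supp}\chi\subset\{a^2+b^2\le\tfrac{1}{2\epsilon^2\sin 2s}\}$, which is compactly contained in $\Sigma_\epsilon$, the cut-off fields $\chi\kappa_k$ are supported in the region where the Fermi map $Y$ of Section 3 is a genuine diffeomorphism; hence $\langle F(u),\chi\kappa_k\rangle$ is unambiguously defined as an integral over $\mathbb R^4$ --- this is in fact one reason for introducing $\chi$, since the approximate solution $u$ is only defined near $\Gamma_\epsilon$ --- and on $\{a^2+b^2\le\tfrac{1}{4\epsilon^2\sin 2s}\}$ it coincides with the non-truncated integrand of Theorem \ref{Jacobi}. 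First I would split, for each $k$,
\[
\langle F(u),\chi\kappa_k\rangle = \langle F(u),\kappa_k\rangle - \langle F(u),(1-\chi)\kappa_k\rangle ,
\]
where the first term on the right is precisely the pairing computed in Theorem \ref{Jacobi} via Lemmas \ref{lemma4.1}--\ref{lemma4.5}, so that $\big(\langle F(u),\kappa_3\rangle,\langle F(u),\kappa_4\rangle\big) = L_H(f,g) + (G_1,G_2)$ with $G_1,G_2 = O(\epsilon)$ of higher order in $(f,g)$.

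Next I would bound the truncation error $\langle F(u),(1-\chi)\kappa_k\rangle$, an integral over $\{a^2+b^2\ge\tfrac{1}{4\epsilon^2\sin 2s}\}$ --- exactly the set where the Fermi coordinates begin to degenerate. On this set the shifted radius satisfies $\tilde r = |(a-\epsilon f)+i(b-\epsilon g)| \ge \tfrac{1}{2\epsilon\sqrt{\sin 2s}} - \epsilon(|f|+|g|) \ge \tfrac{1}{3\epsilon}$ once $\epsilon$ is small, using that $f,g$ and their first derivatives are bounded. By property 3 of the $+1$ vortex (and the vortex ODEs), $U-1$, $V-1$ and the derivatives $U',V',U'',V''$ --- hence every combination of them entering the explicit formulae of Lemmas \ref{lemma4.1}--\ref{lemma4.5} for the components of $F(u)$ and of $\kappa_k$, such as $U^2(1-V)$, $V'/\tilde r$ and $-V''+V'/\tilde r$ --- are $O(e^{-m_\lambda\tilde r})$, and therefore $O(e^{-m_\lambda/(3\epsilon)})$ uniformly on $\operatorname{supp}(1-\chi)$. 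The metric coefficients of the Fermi coordinates and the volume element are bounded by fixed powers of $(a,b)$ and of $(\sin 2s)^{-1}$ on $\Sigma_\epsilon$, and on the relevant set one actually has $\tilde r\gtrsim(\epsilon\sqrt{\sin 2s})^{-1}$, which also blows up as $s\to 0,\tfrac{\pi}{2}$; thus the exponential factor dominates the polynomial growth of the geometry and of the $(s,\theta)$-integration, giving $|\langle F(u),(1-\chi)\kappa_k\rangle|\le C e^{-c/\epsilon}$ for some $c>0$.

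Finally, combining the two estimates yields $\big(\langle F(u),\chi\kappa_3\rangle,\langle F(u),\chi\kappa_4\rangle\big) = L_H(f,g) + (M_1,M_2)$ with $(M_1,M_2) = (G_1,G_2) + O(e^{-c/\epsilon})$; since $O(e^{-c/\epsilon})$ is smaller than any power of $\epsilon$, $M_1,M_2 = O(\epsilon)$ and retain the higher-order-in-$(f,g)$ character of $G_1,G_2$, which is exactly the assertion. The one genuinely delicate point is the bookkeeping in the second step: one must check that the polynomial growth of the geometry in the transition region, where $a^2+b^2$ is comparable to $(\epsilon^2\sin 2s)^{-1}$ and the Fermi coordinates are near breakdown, is beaten uniformly by the decay of the vortex profile --- this is why the sharp exponential rates of property 3, together with $\tilde r\gtrsim(\epsilon\sqrt{\sin 2s})^{-1}$ on that region, are essential. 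No derivative of $\chi$ is ever produced, since $\chi$ only multiplies the test field and is never differentiated against the fixed field $F(u)$ in the pairing; hence no terms appear beyond those already computed in Theorem \ref{Jacobi} and the harmless truncation of its convergent $\tilde r$-integrals.
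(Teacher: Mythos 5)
Your proposal is correct and follows essentially the same route as the paper: the paper justifies the corollary with precisely the one-line observation that, by the exponential decay of $U$ and $V$, inserting the cut-off $\chi$ alters the pairings computed in Theorem \ref{Jacobi} only by exponentially small terms. Your detailed bookkeeping of the truncation error on $\operatorname{supp}(1-\chi)$ is a faithful expansion of that remark.
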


We remark that the principal curvatures of $\Gamma_\epsilon$ in $\pm\epsilon(\sin2s)^\frac32$.
Indeed,
\begin{equation*}
	\begin{aligned}
		&\Gamma_\epsilon = \left(\frac{\cos s}{\epsilon\sqrt{\sin2s}}\Theta, \frac{\sin s}{\epsilon\sqrt{\sin2s}}\Theta\right),\\
		&e_1 = e^{-is}\Theta = -\epsilon(\sin2s)^{\frac32}\partial_s, \text{ }
		e_2 = e^{is}\Theta^\perp = \epsilon\sqrt{\sin2s}\cdot\partial_\theta,\\
		&m = ie^{-is}, n = ie^{is}\Theta^\perp, \text{ }\nabla_{e_1}m = -\epsilon(\sin2s)^\frac32e_1,\\
		&\nabla_{e_2}m = \epsilon\sqrt{\sin2s}\left(\sin2s\cdot e_2 + \cos2s\cdot n\right),\\
		&\nabla_{e_1}n = \epsilon(\sin2s)^\frac32e_2,\\
		&\nabla_{e_2}n = -\epsilon\sqrt{\sin2s}\left(-\sin2s\cdot e_1 + \cos2s\cdot m\right).
	\end{aligned}
\end{equation*}
So we observe that
\begin{equation*}
	\begin{aligned}
		\nabla_{e_1 + e_2}^\tau n = \epsilon(\sin2s)^\frac32(e_1 + e_2),\\
		\nabla_{e_1 - e_2}^\tau n = -\epsilon(\sin2s)^\frac32(e_1 - e_2).
	\end{aligned}
\end{equation*}
For any normal vector $\nu = am + bn$, $a^2 + b^2 = 1$, we have
\begin{equation*}
	\begin{aligned}
		\nabla_{e_1}^\tau\nu = \epsilon(\sin2s)^\frac32(ae_1 + be_2),\\
		\nabla_{e_2}^\tau\nu = \epsilon(\sin2s)^\frac32(be_1 - ae_2).
	\end{aligned}
\end{equation*}
Because $$\det
\begin{pmatrix}
	a & b\\
	b & -a\\
\end{pmatrix}
= -a^2 - b^2 = -1.$$
and mean curvature is 0, we find that for any normal direction $\nu$, the principal curvature is always $\pm\epsilon(\sin2s)^\frac32$.

\section{The approximation solution and errors}
	In this section, we will give the approximation solution and calculate the errors of it. Recall in section 1, we have alreay defined the solution when it is near $\Gamma_\epsilon$: $\psi = \psi^{(1)}(a - \epsilon f, b- \epsilon g)$, $A = A^{(1)}(a - \epsilon f, b- \epsilon g)$. We hope in the whole space, the approximation solution has the form $u = (\psi, A) = (We^{i\varphi}, Z\nabla \varphi)$.
	
	Firstly, we will define $W$ and $Z$:
\begin{equation*}
	\begin{aligned}
		W = \begin{cases}
			U(a - \epsilon f, b - \epsilon g),\quad  r \le \frac1{4\epsilon\sqrt{\sin2s}},\\
			1,\quad r \ge \frac1{2\epsilon\sqrt{\sin2s}},
		\end{cases}
	\end{aligned}
\end{equation*}

\begin{equation*}
	\begin{aligned}
		Z = \begin{cases}
			V(a - \epsilon f, b- \epsilon g),\quad  r \le \frac1{4\epsilon\sqrt{\sin2s}},\\
			1,\quad r \ge \frac1{2\epsilon\sqrt{\sin2s}}.
		\end{cases}
	\end{aligned}
\end{equation*}

In fact, let $\zeta_1$, $\zeta_2$ be a smooth partition of unity on $\mathbb R^4$, subordinate to the open sets $\{r < \frac{1}{2\epsilon\sqrt{\sin2s}}\}$, $\{r > \frac{1}{4\epsilon\sqrt{\sin2s}}\}$. Let $W = \zeta_1U + \zeta_2$, $Z = \zeta_1V + \zeta_2 $ then we have
  \begin{equation*}
  	\begin{aligned}
  		W' = O(e^{-m_\lambda r}),\ Z' = O(e^{-r}),\ \text{ when } r > 2.
  	\end{aligned}
  \end{equation*}

Following \cite{Lin}, we will "extend" the angle function $\phi$ to $\mathbb R^4$ in the following way.
	 We first consider the following equation:
\begin{equation}
	\Delta \omega = 0, \quad in \ \mathbb R^4\backslash\Gamma.
\end{equation}
$\omega$ is a 1-form and satisfies for any oriented closed curve $C$, $\int_C\omega= 2\pi$(the winding number of $C$ around $\Gamma$).
Then we follow the thoughts of Lin F.H. and  Rivi\`ere. T in \cite{Lin} .
Denote:

$G(x) = \frac{1}{8\omega_4\vert x\vert^{2}} $, the fundamental solution of the laplacian operator in $\mathbb R^4$.

$\omega_4$ = the volume of $B_1 \subset \mathbb R^4$.

$\Omega = \mathbb R^4\backslash\Gamma$.

For a locally integrable k-form, we have a $(4 - k)$-current $T_\omega$ given by $T_\omega(\tau) = \int_{\Omega}\omega\wedge\tau$, for any  $\tau \in D^{4-k}(\Omega)$. For any $k$-current $T$ on $\Omega$, $\sigma T$ is a $(k+1)$-current defined by $(\sigma T)(\tau) = T(\delta\tau)$, for any $\tau \in D^{k+1}(\Omega)$. And for any $k$-current $T$ on $\Omega$, $\partial T$ is a $(k-1)$-current defined by $(\partial T)(\tau) = T(d\tau)$, for any $\tau \in D^{k-1}(\Omega)$.

 We can get such a 1-form $\omega$:
\begin{equation}
	\begin{cases}
		\sigma(T_\omega) = 0,\\
		\partial (T_\omega) = 2\pi \Gamma,\\
	\end{cases}
\end{equation}
and for any oriented closed curve $C$, $\int_C\omega = 2\pi$(the winding number of $C$ around $\Gamma$).

In fact, if we suppose for $x\in \Gamma$, $e_1(x), ..., e_4(x)$ is a positive orthonormal base of $\mathbb R^4$, s.t. $e_1(x), e_2(x)$ is a positive base for $T_x\Gamma$. Let $e^1(x),...,e^4(x)$ be the dual base, then we call $\chi(x) = e^1(x)\wedge e^2(x)$ the orientation form of $\Gamma$. Let
\begin{equation*}
	\omega(x) = -2\pi\int_\Gamma *\left[d_x\left(G(x-y)\right)\wedge\chi(y)\right]dH^2(y),
\end{equation*}
for $x\in \mathbb R^4$.

>From (5), we know
\begin{equation}
	\begin{cases}
		\delta \omega = 0,\\
		\delta d\omega = 0.\\
	\end{cases}
\end{equation}
So $\Delta \omega = 0$.

Then, we have $\int_C\omega - d\phi = 0$ , for any oriented closed curve $C\subset\{\rho_1 = \rho_2\}$, so it is equal to $dh$ for some smooth function $h$. It follows from the growth of $\omega$ and $d\phi$ near $\Gamma$ that $df$ is $C^{0,1}$ on $\{\rho_1 = \rho_2\}$. Observe that $\omega - d\phi$ satisfies:
\begin{equation}
	\begin{cases}
		\Delta(\omega - d\phi) = -d(\Delta\phi), \ $in$ \ \{\rho_1 \le \rho_2\},\\
		(\omega - d\phi)_\top = dh, \ $on$ \ \{\rho_1 = \rho_2\},\\
		\left(\delta\left(\omega - d\phi\right)\right)_\top = \Delta \phi,	\ $on$ \ \{\rho_1 = \rho_2\}.
	\end{cases}
\end{equation}
Let us solve the Dirichlet problem
\begin{equation*}
	\begin{cases}
		\Delta \tilde\varphi_1 = -\Delta \phi, $ in  $\{\rho_1 < \rho_2\},\\
		\tilde\varphi_1\vert_{\{\rho_1 = \rho_2\}} = h,	$ in $\{\rho_1 \le  \rho_2\}.
	\end{cases}
\end{equation*}
 Then $d\tilde\varphi_1$ satisfies (9), so $\omega = d\phi + d\tilde\varphi_1$. Similarly, we can get $\omega = d\phi + d\tilde\varphi_2$ in $\{\rho_1 \ge  \rho_2\}$, where $\tilde\varphi_2$ satisfies:
 \begin{equation*}
	\begin{cases}
		\Delta \tilde\varphi_2 = -\Delta \phi, $ in  $\{\rho_1 > \rho_2\},\\
		\tilde\varphi_2\vert_{\{\rho_1 = \rho_2\}} = h,	$ in $\{\rho_1 \ge   \rho_2\}.
	\end{cases}
\end{equation*}
Denote $\tilde\varphi = $
\begin{equation*}
	\begin{cases}
		\phi + \tilde\varphi_1,\  $ in$ \ \{\rho_1 \le  \rho_2\},\\
		\phi + \tilde\varphi_2,\  $ in$ \ \{\rho_1 \ge  \rho_2\}.\\
	\end{cases}
\end{equation*}
then $\tilde\varphi$ is a solution of (5), and $d\tilde\varphi = \omega \in C^\infty(\mathbb R^4\backslash \Gamma)$, $\tilde\varphi \in C^\infty_{loc}(\mathbb R^4\backslash\Gamma)$.

We have the following results:
\begin{lemma}\label{the boundary condition}
	 When it is on $\{\rho_1 = \rho_2\}$, we have
	 \begin{equation*}
		\begin{aligned}
			&\omega_\top(x)  = \frac{\pi}{8\omega_4}\int_{\rho \ge 1, 0\le\theta < 2\pi} |x-y|^{-4}\cdot
			 \{[-2\rho^{-3}\rho_1^2 + 2\sqrt{2}\rho^{-2}\rho_1\cos(\theta - \theta_1)  \\
			 &+ 2\rho_1^2\rho - 2\sqrt{2}\rho_1\cos(\theta - \theta_2)  ]d\theta_1(x)\\
			&+ [2\rho_1^2\rho - 2\sqrt{2}\rho_1\cos(\theta - \theta_2) - 2\rho_1^2\rho^{-3} + 2\sqrt{2}\rho_1\rho^{-2}\cos(\theta - \theta_1) ]d\theta_2(x)\}d\rho d\theta.
		\end{aligned}
	\end{equation*}
\end{lemma}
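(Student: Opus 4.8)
The statement is a completely explicit evaluation of the integral kernel for $\omega$, so the plan is to substitute the data of $\Gamma$ into
\[
\omega(x) = -2\pi\int_\Gamma *\bigl[d_x\bigl(G(x-y)\bigr)\wedge\chi(y)\bigr]\,dH^2(y)
\]
and carry out the resulting computation. First I would parametrize $\Gamma$ by $y=y(\rho,\theta)=\tfrac{\sqrt2}{2}\bigl(\rho e^{i\theta},\rho^{-1}e^{i\theta}\bigr)$ and record $\partial_\rho y$ and $\partial_\theta y$ in the Cartesian coordinates of $\mathbb R^4$; since $\partial_\rho y\cdot\partial_\theta y=0$, the induced area element is $dH^2(y)=|\partial_\rho y|\,|\partial_\theta y|\,d\rho\,d\theta$ and the unit orientation $2$-form obeys $\chi(y)\,dH^2(y)=\partial_\rho y^\flat\wedge\partial_\theta y^\flat\,d\rho\,d\theta$. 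From $G(x)=(8\omega_4|x|^2)^{-1}$ one has $d_x\bigl(G(x-y)\bigr)=-\tfrac1{4\omega_4}|x-y|^{-4}\sum_k(x_k-y_k)\,dx_k$, so the integrand becomes $-\tfrac1{4\omega_4}|x-y|^{-4}\,*\bigl[(x-y)^\flat\wedge\partial_\rho y^\flat\wedge\partial_\theta y^\flat\bigr]$ and the whole task is to evaluate this Hodge star in $\mathbb R^4$.

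To read off the coefficients of $d\theta_1(x)$ and $d\theta_2(x)$ --- which is exactly the tangential piece $\omega_\top$ along $\{\rho_1=\rho_2\}$ that enters the boundary-value problem for $\omega-d\phi$ --- I would pair $\omega$ with the angular vector fields $\partial_{\theta_1}$ and $\partial_{\theta_2}$ at a point $x=(\rho_1e^{i\theta_1},\rho_1e^{i\theta_2})$, where $\partial_{\theta_1}=\rho_1(-\sin\theta_1,\cos\theta_1,0,0)$ and $\partial_{\theta_2}=\rho_1(0,0,-\sin\theta_2,\cos\theta_2)$. Using the identity $(*\eta)(v)\,dx_1\wedge dx_2\wedge dx_3\wedge dx_4=\eta\wedge v^\flat$, valid for a $3$-form $\eta$ and a vector $v$ in $\mathbb R^4$, each pairing turns into a single $4\times4$ determinant with rows $x-y$, $\partial_\rho y$, $\partial_\theta y$, $\partial_{\theta_j}$ ($j=1,2$). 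Expanding these determinants and collapsing the trigonometric sums with $\cos a\cos b+\sin a\sin b=\cos(a-b)$ should reduce everything to the bracketed polynomials in $\rho$, $\rho_1$, $\cos(\theta-\theta_1)$, $\cos(\theta-\theta_2)$ displayed in the lemma; the radial component of $\omega_\top$, if needed, is obtained the same way by pairing with $\partial_{\rho_1}+\partial_{\rho_2}$.

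Finally, for the domain of integration I would split $\int_\Gamma=\int_{\rho\le1}+\int_{\rho\ge1}$ and re-express the first piece via $\rho\mapsto\rho^{-1}$, the reparametrization corresponding to the symmetry of $\Gamma$ that interchanges the two $\mathbb C$-factors; adding the two contributions produces the integral over $\rho\ge1$ and, together with the constants coming from $8\omega_4$ and from the factors $\tfrac{\sqrt2}{2}$ in $y$, the overall prefactor $\tfrac{\pi}{8\omega_4}$. The same folding is what makes the coefficients of $d\theta_1(x)$ and $d\theta_2(x)$ coincide in the final expression. I do not expect a conceptual obstacle: the only genuine difficulty is bookkeeping --- keeping the Hodge-star orientation, the signs produced by the determinant expansions, and the several numerical constants mutually consistent across a moderately long but otherwise routine calculation.
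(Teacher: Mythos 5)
Your proposal is correct and follows essentially the same route as the paper's Appendix C proof: parametrize $\Gamma$ by $(\rho,\theta)$, substitute into the explicit integral formula for $\omega$, evaluate the Hodge star to extract the $d\theta_1(x)$ and $d\theta_2(x)$ components, and fold the region $\rho\le 1$ onto $\rho\ge 1$ using the symmetry of $\Gamma$ (the paper implements this via the reflection $\theta\mapsto\theta_1+\theta_2-\theta$ combined with $\rho\mapsto\rho^{-1}$, which is exactly your folding). The only difference is cosmetic bookkeeping: you organize the component extraction as $4\times 4$ determinants in the Cartesian frame via $(*\eta)(v)\,dx_1\wedge dx_2\wedge dx_3\wedge dx_4=\eta\wedge v^{\flat}$, whereas the paper computes $*\chi$ in the polar coordinate coframe and contracts with $d_x|x-y|^2$; the two are equivalent.
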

The proof will be given in the Appendix. Next we estimate $\tilde \varphi_1$.
	
\begin{theorem}
	$\vert \tilde\varphi_1\vert < \infty$ in $\mathbb R^4$.
\end{theorem}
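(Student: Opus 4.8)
The plan is to bypass the Dirichlet formulation and instead view $\tilde\varphi_1$ as, up to an additive constant, a primitive of the $1$-form $\beta:=\omega-d\phi$. Understood as currents on $\mathbb{R}^4$, both $d\omega$ and $d(d\phi)$ equal $2\pi[\Gamma]$, so the singular parts cancel and $\beta$ extends across $\Gamma$ as a genuinely closed $1$-form; by the growth comparison already recorded (the two $\mathrm{dist}(\cdot,\Gamma)^{-1}$ singularities of $\omega$ and of $d\phi$ cancel), $\beta$ has $C^{0,1}$ coefficients near $\Gamma$ and its period around $\Gamma$ vanishes. Since $\{\rho_1<\rho_2\}=\{|z_1|<|z_2|\}$ deformation retracts onto $\Pi_2=\{z_1=0\}$ and is therefore contractible, the closed continuous form $\beta$ on $\{\rho_1\le\rho_2\}$ has a single-valued primitive, and $d\tilde\varphi_1=\beta$ forces $\tilde\varphi_1(x)=\tilde\varphi_1(x_0)+\int_{x_0}^{x}\beta$ along any path. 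Thus $|\tilde\varphi_1|<\infty$ reduces to: (i) $\beta$ is continuous, hence locally bounded (already in hand); and (ii) $\beta$ has integrable decay at infinity, $|\beta(x)|\le C(1+|x|)^{-1-\kappa}$ for some $\kappa>0$ outside a fixed tubular neighbourhood of $\Gamma$, so that the line integral converges uniformly as $|x|\to\infty$.

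For the decay I would compare both $\omega$ and $d\phi$ with the model form $d\theta_1+d\theta_2$. Recall $\Gamma=(\tfrac{\sqrt2}{2}\rho e^{i\theta},\tfrac{\sqrt2}{2}\rho^{-1}e^{i\theta})$ is asymptotic to $\Pi_1\cup\Pi_2$: the branch $\rho\to\infty$ lies over $\Pi_1$ with normal deviation $O(R^{-1})$ at radius $R$, and symmetrically the branch $\rho\to 0$ over $\Pi_2$. Split $\Gamma=\Gamma_0\cup\Gamma_1\cup\Gamma_2$ into a compact ``waist'' and the two ends. In the Biot--Savart formula $\omega(x)=-2\pi\int_{\Gamma}*[\,d_xG(x-y)\wedge\chi(y)\,]\,dH^2(y)$, the contribution of an exact $2$-plane through the origin is the corresponding angular form (this is $d(d\theta_1+d\theta_2)=2\pi([\Pi_2]+[\Pi_1])$ read backwards), and the orientation forms $\chi$ of the ends converge to those of $\Pi_1,\Pi_2$; hence the ends contribute $d\theta_1+d\theta_2$ plus an error governed by the $O(R^{-1})$ bending and the end-truncation, which one estimates to be $O(|x|^{-1-\kappa})$, while the compact piece $\Gamma_0$ contributes $O(|x|^{-3})$. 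So $\omega=d\theta_1+d\theta_2+O(|x|^{-1-\kappa})$ at infinity, which matches and can be cross-checked against the explicit expression for $\omega_\top$ in Lemma \ref{the boundary condition}. The form $d\phi$ has the same leading behaviour: $\phi$ is a phase winding once around $\Gamma$, hence at infinity modeled on $\theta_1+\theta_2$, with correction again controlled by the $O(R^{-1})$ distance between $\Gamma$ and $\Pi_1\cup\Pi_2$, so $d\phi=d\theta_1+d\theta_2+O(|x|^{-1-\kappa})$. Subtracting, $\beta=O(|x|^{-1-\kappa})$ away from $\Gamma$, while along the ends of $\Gamma$ the singular parts cancel and $\beta$ stays bounded; this yields the uniform convergence of $\int_{x_0}^{x}\beta$, whence $|\tilde\varphi_1|<\infty$ on $\{\rho_1\le\rho_2\}$. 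Boundedness of the trace $h$ on $\{\rho_1=\rho_2\}$ follows the same way, or directly from Lemma \ref{the boundary condition}, and by the reflection symmetry of $r,\phi$ (and the analogous $\tilde\varphi_2$) the bound extends to all of $\mathbb{R}^4$.

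An equivalent route keeps the PDE: write $\tilde\varphi_1=w+\tilde\varphi_1^{\flat}$ with $w$ the Newtonian potential of $-\Delta\phi$, which is bounded because $\Delta\phi$ is smooth, essentially concentrated near the waist of $\Gamma$, and decaying, so that the $|x|^{-2}$ Newtonian kernel in $\mathbb{R}^4$ integrates to a bounded function; then $\tilde\varphi_1^{\flat}$ is harmonic on $\{\rho_1<\rho_2\}$ with bounded data $h$ and at most sublinear growth, hence bounded by a Phragm\'en--Lindel\"of argument. This needs the same decay input on $\Delta\phi$, which is again a consequence of the asymptotic planarity of $\Gamma$.

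\textbf{Main obstacle.} The crux is the far-field cancellation in the second paragraph: that the $|x|^{-1}$ parts of $\omega$ and of $d\phi$ are \emph{both} exactly $d\theta_1+d\theta_2$ and that the remainders decay strictly faster with an explicit $\kappa>0$. The delicate feature is that when $|x|$ is large the nearest portion of $\Gamma$ is one of the ends, which is itself far out and at a comparable distance, so the Biot--Savart integral is not of ``distant dipole'' type; one must exploit the near-planarity of the end and the precise $O(R^{-1})$ rate of the Arezzo--Pacard surface, together with the fact that $\Pi_1$ and $\Pi_2$ share the same vertex, to produce the gain. The behaviour of $\beta$ near $\Gamma$ itself requires nothing new, being the $C^{0,1}$ matching of singular parts already established.
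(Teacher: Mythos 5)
Your primary route hinges on the isotropic far--field decay $|\omega-d\phi|=O(|x|^{-1-\kappa})$, and this estimate is false. On the diagonal $\{\rho_1=\rho_2\}$ far from the origin, the explicit computation in Appendix C gives $a=\cos\alpha\sqrt{\rho_1^2-\cos\alpha}$, $b=\sin\alpha\sqrt{\rho_1^2-\cos\alpha}$ with $\alpha=\theta_2-\theta_1$, hence $\phi=\theta_2-\theta_1$ and $d\phi=-d\theta_1+d\theta_2$ there; by contrast, Lemma \ref{the boundary condition} shows that the tangential part of $\omega$ on the diagonal is proportional to $d\theta_1+d\theta_2$. Therefore $\beta_\top=(c+1)\,d\theta_1+(c-1)\,d\theta_2$ for a bounded coefficient $c$, and since $c+1$ and $c-1$ cannot vanish simultaneously, $|\beta|\gtrsim \rho_1^{-1}\sim|x|^{-1}$ along the diagonal: there is no gain $\kappa>0$, and your criterion (ii) fails, so the convergence of $\int^x\beta$ is not established by your argument. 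The same computation shows your model $d\phi=d\theta_1+d\theta_2+O(|x|^{-1-\kappa})$ is wrong: $d\phi$ is approximately $d\theta_2$ near the end over $\Pi_1$, equals $d\theta_2-d\theta_1$ on the diagonal, and is approximately $d\theta_1$ near the end over $\Pi_2$; it is not globally asymptotic to $d\theta_1+d\theta_2$. (A smaller slip: $\{\rho_1<\rho_2\}$ deformation retracts onto $\Pi_2\setminus\{0\}$, which is homotopy equivalent to $S^1$, not a point, so even granting decay you would still have to check that one period of $\beta$ vanishes.)

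What actually makes $h$, and then $\tilde\varphi_1$, bounded is anisotropic rather than a rate of decay, and this is the mechanism the paper uses: from Lemma \ref{the boundary condition} one reads off that $dh=(\omega-d\phi)_\top$ has identically vanishing radial component along the diagonal and \emph{bounded coefficients} in front of $d\theta_1$ and $d\theta_2$; since the angular variables range over a compact torus and $h$ does not vary in the radial direction, $h$ is bounded even though $|dh|\sim|x|^{-1}$ in the metric norm. With $h$ bounded and $\Delta\phi=O(\rho^{-4})$ at infinity (so that its Newtonian potential in $\mathbb R^4$ is bounded), the maximum principle / Phragm\'en--Lindel\"of step of your ``equivalent route'' closes the proof; that last paragraph of yours is essentially the paper's argument. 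As written, however, it still routes the boundedness of the trace $h$ through the false decay estimate (``follows the same way''), so the gap propagates unless you replace that step by the component-by-component observation above.
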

\begin{proof}
	Firstly, we will estimate $\Delta\phi$.
	By previous arguments, we know that when $\rho$ is large enough, $\Delta \phi = O(\rho^{-4})$. Besides, $\phi$ is smooth away from $\Gamma$. So we only need to estimate $\Delta \phi$ near $\Gamma$ in $B_R$.
	\begin{equation*}
		\begin{aligned}
			\Delta \phi &= (w_{3,3} + w_{3,4} + w_{4,3} + w_{4,4})\phi\\
			&= 2b(\sin2s)^{\frac52}(\cos2s)^2 + O(r^{-2}).
		\end{aligned}
	\end{equation*}
	Therefore, $\vert\Delta\phi\vert_{C^{0,\alpha}} < \infty$.
	
	Next we will show that $\vert h\vert < \infty$.
	
	By Lemma \ref{the boundary condition} , we can get that in $\{\rho_1 = \rho_2\}$, $\omega(x) \in $ span$\{d\rho_1 - d\rho_2, d\theta_1 + d\theta_2\}$.
	Because of $(\omega - d\phi)_\top = dh$ on $\{\rho_1 = \rho_2\}$, we know:
	\begin{equation*}
		\begin{aligned}
			\vert h_{\theta_1}\vert \le C, \text{ }\vert h_{\theta_2}\vert \le C, \text{ }\vert h_{\rho_1}\vert = 0.\\
		\end{aligned}
	\end{equation*}
	If we fix $h = 0$ at (1, 0, 1, 0), then $h < \infty$ on $\{\rho_1 = \rho_2\}$.
	So by standard elliptic estimates, we have $\vert \tilde\varphi_1\vert < \infty$.

\end{proof}
Therefore, we know that the same results hold for $\tilde\varphi_2$ and $\tilde\varphi$.

	After rescaling, we can get similar results for $\Gamma_\epsilon$. For simplicity, we still denote it by $\tilde\varphi$.
		
	\begin{equation*}
		\begin{aligned}
			\varphi = \zeta_1\phi(a-\epsilon f, b- \epsilon g) + \zeta_2\tilde \varphi.
		\end{aligned}
	\end{equation*}
In order to estimate $F(u)$, we define the following weighted norm:
\begin{equation*}
	\begin{aligned}
		\|h\|_{**} &=  \sup_{P}\rho(P)^2e^{\delta r}\|h\|_{L^q(B_1(P))},
	\end{aligned}
\end{equation*}
where $ 1 < q < 2$.

For 1-form $B$, we define:
\begin{equation*}
	\begin{aligned}
		\|(B, h)\|_{**} = \|B\|_{**} + \|h\|_{**}.
	\end{aligned}
\end{equation*}
In order to control $f, g$, we define the following norm $W^{j, q}_k$:
	\begin{equation*}
		\begin{aligned}
			&\|f\|_{0, k, q} := \sup_{P \in \Gamma_\epsilon}\rho(P)^k\|f\|_{L^q(\rho(P) \le  \rho \le \rho(P) + 1 )},\\
		&\|f\|_{2, k, q} := \sup_{P \in \Gamma_\epsilon}\rho(P)^k\|f\|_{L^q(\rho(P) \le  \rho \le \rho(P) + 1 )} + \sup_{P \in \Gamma_\epsilon}\rho(P)^{k + 1}\|\nabla f\|_{L^q(\rho(P) \le  \rho \le \rho(P) + 1 )}\\
		&+ \sup_{P \in \Gamma_\epsilon}\rho(P)^{k + 2}\|\nabla^2 f\|_{L^q(\rho(P) \le  \rho \le \rho(P) + 1 )},\\
		&\|(f, g)\|_{2, k, q} := \|f\|_{2, k, q} + \|g\|_{2, k, q}.
		\end{aligned}
	\end{equation*}
And we always assume that
	\begin{equation}\label{disturbance}
		\|(f, g)\|_{2, 1, q} \le C.
	\end{equation}

	Now, we have the following results for the error:

	\begin{theorem}
		\begin{equation}\label{error}
			\|F(u)\|_{**} < C\epsilon^2.
		\end{equation}
	\end{theorem}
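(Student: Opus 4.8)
The plan is to estimate $F(u)$ separately on the three regions cut out by the partition of unity $\zeta_1,\zeta_2$: the \emph{core tube} $\mathcal T_1=\{r\le \tfrac1{4\epsilon\sqrt{\sin 2s}}\}$, where $u$ coincides with the transplanted $+1$ vortex $\bigl(\psi^{(1)}(a-\epsilon f,b-\epsilon g),\,V(\tilde r)\nabla\phi(a-\epsilon f,b-\epsilon g)\bigr)$; the \emph{transition annulus} $\mathcal T_2=\{\tfrac1{4\epsilon\sqrt{\sin 2s}}<r<\tfrac1{2\epsilon\sqrt{\sin 2s}}\}$, where the cut-offs are active; and the \emph{exterior} $\mathcal T_3=\{r\ge\tfrac1{2\epsilon\sqrt{\sin 2s}}\}$, where $W\equiv Z\equiv 1$ and $\varphi\equiv\tilde\varphi$. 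Since $\|\cdot\|_{**}$ is a supremum over unit balls, it suffices to bound the contribution of each region.

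First I would dispose of the exterior: there $\psi=e^{i\tilde\varphi}$ and $A=\nabla\tilde\varphi=\omega$, so $\nabla_A\psi=\nabla\psi-iA\psi=ie^{i\tilde\varphi}(\nabla\tilde\varphi-\omega)=0$, whence $\Delta_A\psi=0$ and $|\psi|^2-1=0$; since $\mathcal T_3$ is disjoint from $\Gamma_\epsilon$ and $\omega$ is closed and coclosed away from $\Gamma_\epsilon$, we also get $dA=d\omega=0$, so $d^*dA=0$ and $\mathrm{Im}(\nabla_A\psi\cdot\bar\psi)=0$. Hence $F(u)\equiv 0$ on $\mathcal T_3$. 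Next, on $\mathcal T_2$ one has $r\gtrsim \epsilon^{-1}$, so by the exponential convergence $U,V\to1$ and $U',V'\to0$ (with rates $m_\lambda$ and $1$) all of $U-1,V-1,U',V'$ are $O(e^{-c/\epsilon})$ there, while $|\nabla\zeta_i|\lesssim\epsilon\sqrt{\sin 2s}$ and $\phi(a-\epsilon f,b-\epsilon g)-\tilde\varphi$ is bounded by a quantity controlled through $\tilde\varphi_1$ and \eqref{disturbance}; consequently every term of $F(u)$ on $\mathcal T_2$ carries a factor $e^{-c/\epsilon}$, and this region contributes $O(e^{-c/(2\epsilon)})\ll\epsilon^2$.

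The heart of the proof is the core tube $\mathcal T_1$. There $F(u)$ equals the difference between the four-dimensional Ginzburg--Landau operator written in the Fermi metric and the flat two-dimensional operator (which annihilates the transplanted vortex because $(Ue^{i\phi},V\nabla\phi)$ solves the planar system in the Coulomb gauge $d^*A^{(1)}=0$); this difference is precisely what is expanded in Lemmas~\ref{lemma4.1}--\ref{lemma4.5}, together with the analogous expansions for $\mathrm{Im}(\nabla_A\psi\cdot\bar\psi)$ and $d^*dA$. Two structural facts then give the bound. (i) $\Gamma_\epsilon$ is minimal, so its mean curvature vanishes and its principal curvatures are $\pm\epsilon(\sin 2s)^{3/2}$; hence the $\epsilon$-linear correction (which would be proportional to the mean curvature) is absent, every geometric correction term carries the prefactor $\epsilon^2$, and each such term carries at least one power of $\sin 2s$, which cancels the weight $\rho(P)^2=(\sin 2s)^{-1}$ in $\|\cdot\|_{**}$. (ii) Since $U,V\to1$ exponentially, the transplanted vortex converges exponentially to the configuration $(e^{i\phi},\nabla\phi)$, for which $\nabla_A\psi\equiv 0$; writing $\nabla_A\psi=e^{i\varphi}\bigl[\nabla W+iW(1-Z)\nabla\varphi\bigr]$ shows that $\nabla_A\psi$ — and therefore $\Delta_A\psi$, $dA$ and $\mathrm{Im}(\nabla_A\psi\cdot\bar\psi)$ — decays like $e^{-m_\lambda r}$ times a polynomial, which is absorbed by the weight $e^{\delta r}$ provided $\delta<m_\lambda$. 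Finally, the remaining terms depend on $f,g$ and their derivatives; they carry an extra power of $\epsilon$ and are controlled through \eqref{disturbance}, so they are of order $\epsilon^3$ or higher. Summing these bounds over all centers $P$ gives $\|F(u)\|_{**}\le C\epsilon^2$.

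I expect the only genuine obstacle to be the bookkeeping on $\mathcal T_1$: one has to check, term by term in the expansions of Lemmas~\ref{lemma4.1}--\ref{lemma4.5} (and in the corresponding expansions of $d^*dA$ and $\mathrm{Im}(\nabla_A\psi\cdot\bar\psi)$), that each summand indeed carries the factor $\epsilon^2$ together with enough powers of $\sin 2s$ and of the decaying profiles $U',V',1-U^2,1-V$, and that the terms produced by interpolating the local angle $\phi$ against the global $\tilde\varphi$ across the cut-off do not spoil this structure. These are exactly the long but routine computations the paper defers to the Appendix.
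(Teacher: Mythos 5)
Your proposal follows essentially the same route as the paper: decompose according to the cut-off regions, observe $F(u)\equiv 0$ in the exterior, get exponential smallness $O(e^{-\delta r})$ with $r\gtrsim \epsilon^{-1}$ in the transition annulus, and obtain $O(\epsilon^2\rho^{-2}e^{-\delta r})$ in the core tube from the minimality of $\Gamma$ (no $O(\epsilon)$ mean-curvature term) together with the fact that the planar vortex solves the flat two-dimensional system. The one point you gloss over, which the paper isolates by further splitting the core at $r<2$, is that individual terms such as $We^{i\varphi}|\nabla\varphi|^2$ are only $O(\tilde r^{-1})$ at the vortex core — this is exactly why the $\|\cdot\|_{**}$ norm is based on $L^q$ with $1<q<2$ rather than a H\"older norm — but this sits comfortably inside the ``term-by-term bookkeeping'' you correctly defer.
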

	\begin{proof}

		\begin{itemize}
		\item 1.When $r > \frac{1}{2\epsilon\sqrt{\sin2s}} $, we can get
		\begin{equation*}
			F(u) = 0.
		\end{equation*}
		
	\item 2.When $\frac{1}{4\epsilon\sqrt{\sin2s}}  < r < \frac{1}{2\epsilon\sqrt{\sin2s}} $,
		\begin{equation*}
			\begin{aligned}
				1 - Z, 1 - W = O(e^{-\delta r}) , \text{ where } 0 < \delta < \min\{m_{\lambda}, 1\}.
			\end{aligned}
		\end{equation*}
  Thus we can get that
	\begin{equation*}
		F(u) = O(e^{-\delta r}).
	\end{equation*}
	\item 3.When $2< r < \frac{1}{4\epsilon\sqrt{\sin2s}}$, then
	\begin{equation*}
		W = U, Z = V, \varphi = \phi.
	\end{equation*}
	Similarly, we know at least $F(u) = O(e^{-\delta r})$. So if we want to get more accurate estimates, we only need to consider the exponential decay items in $F(u)$, which we have already calculated in section 1.
	Then we get the errors:
\begin{equation*}
	\begin{aligned}
		F(u) = O(\epsilon^2\rho^{-2}e^{-\delta r}).\\
	\end{aligned}
\end{equation*}
	\item 4.When $r < 2$, observe that the approximate solution may not be $C^{\infty}$ smooth near $\Gamma_\epsilon$. For example, when it is close to $\Gamma_\epsilon$, we have:
	\begin{equation*}
		\Delta \psi = \Delta (We^{i\varphi}) = \Delta W\cdot e^{i\varphi} - We^{i\varphi}\cdot|\nabla\varphi|^2 + 2ie^{i\varphi}\nabla W\cdot\nabla \varphi + ie^{i\varphi}W\Delta \varphi.
	\end{equation*}
	
	Then the item $ We^{i\varphi}\cdot|\nabla\varphi|^2 = O(\tilde r^{-1})$.
	This is different from the case in Allen-Cahn equation. As a result, we use the $L^p$ norm rather than H\"older norm.
	
	We only need to care about the $O(\tilde r^{-1})$ items, and get:
	\begin{equation*}
		\begin{aligned}
			-\Delta_A\psi + \frac{\lambda}{2}(|\psi|^2-1)\psi = O(\epsilon^2\rho^{-2}\tilde r^{-1}),\\
			d^*dA - Im(\nabla_A\psi\cdot\bar\psi) =  O(\epsilon^2\rho^{-2}).
		\end{aligned}
	\end{equation*}

	\end{itemize}

Therefore, we get that:
\begin{equation*}
	\|F(u)\|_{**} < C\epsilon^2.
\end{equation*}

	\end{proof}

Remark: We distinguish $r$ and $\tilde r$ only when it is close to $\Gamma_\epsilon$. Otherwise, we see them as the same.

\section{The linearized operator}

>From now on, we will apply standard infinite dimensional Lyapunov-Schmidt reduction. See \cite{Liu}, \cite{Del Pino}.

Denote $L$ as the linearization of the Ginzburg-Landau equations at an approximate solution $ u = (A, \psi)$. Then

\begin{equation*}
	\begin{aligned}
		L(B, \eta) =
		\begin{pmatrix}
			d^*dB - Im(\nabla\psi\cdot\bar \eta + \nabla \eta\cdot\bar\psi) + A(\bar \psi \eta + \psi\bar \eta) + B|\psi|^2\\
			-\Delta_A\eta - id^*B\cdot\psi + 2i<B, d\psi> + 2<A, B>\psi + \frac{\lambda}{2}(\psi^2\bar \eta  + 2|\psi|^2\eta - \eta)
		\end{pmatrix}.
	\end{aligned}
\end{equation*}

Define $v = (\tilde A, \tilde \psi) := u + w = (A, \psi) + (B, \eta)$.  Note that the first equation is not elliptic, so we always need to add some items so that the new operator is uniformly elliptic.

Denote the new operator $T : \Omega^0(\mathbb R^4, \mathbb R) \rightarrow \Omega^1(\mathbb R^4, \mathbb R) \oplus \Omega^0(\mathbb R^4, \mathbb C) $:
\begin{equation*}
	T(v) = (dv, iv\tilde\psi).
\end{equation*}
We remark that in fact that the operator is the kernel of $L$ induced by the gauge transformation:
\begin{equation*}
	\psi \rightarrow e^{iv}\psi,
\end{equation*}
\begin{equation*}
	A\rightarrow A + \nabla v.
\end{equation*}

The adjoint of $T$ is given by:
\begin{equation*}
	T^* (B, \eta) = d^*B - Im(\bar \eta\tilde\psi).
\end{equation*}
And \begin{equation*}
	TT^*(B, \eta) = \begin{pmatrix}
		dd^*B - d[Im(\bar\eta\psi)]\\
		i\tilde\psi\cdot(d^*B - Im(\bar\eta\psi)
	\end{pmatrix}.
\end{equation*}
Finally we define $\mathbb L(B, \eta) := L(B, \eta) + TT^*(B, \eta)$:
\begin{equation*}
	\mathbb L(B, \eta) = \begin{pmatrix}
		-\Delta B + 2Im(\overline{\nabla_A\psi}\cdot \eta) + B|\psi|^2\\
		-\Delta_A\eta + 2i<B,\nabla_A\psi>  + \frac{1}{2}(\lambda - 1)\psi^2\bar \eta + (\lambda + \frac{1}{2} )|\psi|^2\eta - \frac{\lambda}{2}\eta
	\end{pmatrix}.
\end{equation*}

Here we remark that, we only use the linear part of $TT^*(B, \eta)$ and the nonlinear part will be added to the following $N(w)$ part.

So by \eqref{error}, we can define the following weighted norm:

\begin{equation*}
	\begin{aligned}
		\|\eta\|_{*} &=   \sup_{P}\rho(P)^2e^{\delta r}\|\eta\|_{W^{2,q}(B_1(P))},
	\end{aligned}
\end{equation*}
where $1 < q < 2$.

Similarly, we define:
\begin{equation*}
	\|(B, \eta)\|_{*} = \|B\|_{*} + \|\eta\|_{*}.
\end{equation*}

We hope the "true" solution of the Ginzburg-Landau equation is $u + w :=(A, \psi) + (B, \eta)$, then we get the new equations for $w = (B, \eta)$:
\begin{equation*}
	\begin{aligned}
		F(u + w) + TT^*(B, \eta) = F(u) + \mathbb Lw + N(w) = 0,
	\end{aligned}
\end{equation*}

where
\begin{equation}\label{nonlinear items}
	N(w) = \begin{pmatrix}
		-Im(\bar\eta\nabla_A\eta) + B(2Re(\bar\psi\eta) + |\eta|^2) \\
		\lambda(2\psi\bar\eta + \bar\psi\eta + |\eta|^2)\eta + |B|^2(\psi + \eta) + i[-\eta Im(\bar\eta\psi) + 2B\cdot\nabla_A\eta]\\
	\end{pmatrix}.
\end{equation}

We now recall the nondegeneracy property of the linearized operator.

\begin{theorem}[See \cite{G}]
	For all $\lambda > 0$, the $\pm1$-vortex is stable.
\end{theorem}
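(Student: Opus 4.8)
The statement asserts that the second variation of the two-dimensional energy $E$ at the vortex $W=(V(r)\nabla\phi,\,U(r)e^{i\phi})$ is nonnegative, once one works modulo the gauge group $\psi\mapsto e^{iv}\psi,\ A\mapsto A+\nabla v$ and the translations of $\mathbb{R}^{2}$. The plan is to reduce to a family of ordinary differential operators and then to treat $\lambda\le1$ and $\lambda>1$ separately. First I would fix the gauge, restricting admissible perturbations $(B,\eta)$ to the Coulomb slice $T^{*}(B,\eta)=0$; on this slice the Hessian agrees with the elliptic operator $\mathbb{L}$ written above (now its $\mathbb{R}^{2}$ linearization at $W$) and the infinitesimal gauge directions have been removed, so the only zero modes left are the two translations. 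Next I would use equivariance: $W$ is invariant under the diagonal action of a spatial rotation and a phase rotation, so $\mathbb{L}$ commutes with the total angular momentum; expanding $\eta$ and $B$ into angular Fourier modes and pairing each $e^{ik\phi}$-mode with the mode produced by the $\psi^{2}\bar\eta$ coupling block-diagonalizes $\mathbb{L}=\bigoplus_{k\in\mathbb{Z}}\mathcal{L}_{k}$ into small ODE systems on $(0,\infty)$ with weight $r\,dr$. The translational zero modes sit in the lowest non-trivial blocks, and the whole statement becomes: $\mathcal{L}_{k}\ge0$ for every $k$ and every $\lambda>0$, with strict positivity away from those few blocks.

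For $\lambda\le1$ this is soft. At $\lambda=1$ the energy admits a Bogomolny/sum-of-squares decomposition $E=(\text{manifestly nonnegative terms})+(\text{topological term }\propto\deg\psi)$, so $W$ is an absolute minimizer in its degree class and $D^{2}E(W)\ge0$ at once; the same decomposition exhibits each $\mathcal{L}_{k}$ as $\mathcal{D}_{k}^{*}\mathcal{D}_{k}$ for an explicit first-order operator $\mathcal{D}_{k}$. For $0<\lambda<1$ one checks that $\mathcal{L}_{k}$ differs from its $\lambda=1$ counterpart only by adding $\tfrac12(1-\lambda)U^{2}$ times a nonnegative diagonal block acting on the $\eta$-component, whence $\mathcal{L}_{k}\ge\mathcal{L}_{k}|_{\lambda=1}\ge0$. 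Neither argument uses the degree being $\pm1$.

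For $\lambda>1$ the perturbing term has the wrong sign, and this is where I expect the real work — and the role of the degree — to lie. I would argue by continuity in $\lambda$: the $\mathcal{L}_{k}$ depend analytically on $\lambda$, they are nonnegative at $\lambda=1$ and bounded below by a fixed positive constant there away from the symmetry blocks (the linearization has a mass gap at infinity, so the essential spectrum stays away from $0$), while the symmetry blocks keep their translation-protected kernels for every $\lambda$. One then must show that no eigenvalue of any $\mathcal{L}_{k}$ reaches $0$ as $\lambda$ increases past $1$. For this I would use the structural facts about the profile recorded above — $0<U,V<1$, the behavior $U=cr$, $V=dr^{2}$ near the origin and $U=1+O(e^{-m_{\lambda}r})$, $V=1+O(e^{-r})$ at infinity, the monotonicity of $U$ and $V$, and the equation $-V''+V'/r-U^{2}+VU^{2}=0$ — to prove a Hardy/Sturm-type coercivity bound for $\mathcal{L}_{k}$ that persists for $\lambda>1$ precisely when $|\deg\psi|=1$: the potentially destabilizing perturbation is a rigid change of the phase winding, and for a single unit of flux its contribution stays nonnegative, whereas for $|\deg\psi|\ge2$ that same mode does cross zero — which is exactly the instability referred to in the introduction.

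The main obstacle is this last step: the quantitative spectral analysis of the non-radial ODE operators $\mathcal{L}_{k}$ for $\lambda>1$, i.e.\ excluding any zero-crossing. There is no variational shortcut, so the argument has to combine the fine asymptotics and monotonicity of the vortex profile with the special arithmetic of degree $\pm1$; this is precisely the content of \cite{G}, which we invoke rather than reproduce here.
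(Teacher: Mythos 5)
This statement is not proved in the paper at all: it is imported verbatim from Gustafson--Sigal \cite{G} (note the ``[See \cite{G}]'' in the theorem header), so there is no internal argument to compare yours against. Your sketch is a fair description of the overall strategy of \cite{G} --- fix the gauge on the Coulomb slice $T^*(B,\eta)=0$ so that the Hessian becomes the elliptic operator $\mathbb L$, use the rotational equivariance of the vortex to block-diagonalize into angular-momentum sectors $\mathcal L_k$, and exploit the Bogomolny factorization at $\lambda=1$. But as a proof it has two genuine gaps.

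First, the reduction of $0<\lambda<1$ to $\lambda=1$ by monotone comparison does not work as stated. The second variation of the potential term is $\tfrac{\lambda}{2}\bigl[2(\mathrm{Re}(\bar\psi\eta))^2-(1-|\psi|^2)|\eta|^2\bigr]$, whose $\lambda$-derivative is indefinite (positive on perturbations aligned with $\psi$, negative on pointwise-orthogonal ones), not a nonnegative diagonal block times $(1-\lambda)$. Moreover the background profile $(U_\lambda,V_\lambda)$ itself depends on $\lambda$ through the ODE system, so $\mathcal L_k|_\lambda$ and $\mathcal L_k|_{\lambda=1}$ are linearizations at \emph{different} solutions; the inequality $\mathcal L_k\ge\mathcal L_k|_{\lambda=1}$ does not follow from comparing the explicit $\lambda$-coefficients. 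Second, for $\lambda>1$ --- which is the entire nontrivial content of the theorem for degree $\pm1$ --- you explicitly defer to \cite{G}. That is a legitimate thing to do in this paper (the authors do exactly that), but it means your text is a citation with a motivating preamble rather than a proof; the quantitative spectral analysis of the sectors $\mathcal L_k$ for $\lambda>1$, where the degree-one arithmetic actually enters, is the theorem, and it is not supplied.
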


Naturally, we then consider the linearized operator in the $\mathbb R^2 \times \mathbb R^2$ case. By Fourier transform, we can get:
\begin{corollary}\label{linearized operator}
	Let $(x, y) \in \mathbb R^2 \times \mathbb R^2$ and denote $(A^{(1)}, \psi^{(1)})$ the $+1$ vortex solution to the Ginzburg-Landau equations.  Define the linear operator
	\begin{equation*}
		\begin{aligned}
			L^{(1)}(B, \eta):= \begin{pmatrix}
				-\Delta B + 2Im(\overline{\nabla_A\psi}\cdot\eta) + B|\psi|^2\\
			-\Delta_A\eta + 2i<B,\nabla_A\psi>  + \frac{1}{2}(\lambda - 1)\psi^2\bar \eta + (\lambda + \frac{1}{2} )|\psi|^2\eta - \frac{\lambda}{2}\eta
			\end{pmatrix}.
		\end{aligned}
	\end{equation*}
	where $(A, \psi) = (A^{(1)}(y), \psi^{(1)}(y))$. Then if $\|(B,\eta)\|'_* < \infty$, we have $(B, \eta) = 0$.
\end{corollary}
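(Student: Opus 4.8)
The plan is to reduce the $\mathbb{R}^2\times\mathbb{R}^2$ problem to the two-dimensional one, where Theorem 5.1 applies. Write $(x,y)\in\mathbb{R}^2_x\times\mathbb{R}^2_y$, so that the coefficients of $L^{(1)}$ depend only on the $y$-variable (through the $+1$ vortex $(A^{(1)}(y),\psi^{(1)}(y))$), while the operator acts on functions of both $x$ and $y$. Suppose $(B,\eta)$ solves $L^{(1)}(B,\eta)=0$ with $\|(B,\eta)\|_*'<\infty$. First I would take the partial Fourier transform in the $x$-variable: set $\widehat{(B,\eta)}(\xi,y)$ for $\xi\in\mathbb{R}^2$. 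Since the $x$-dependence enters $L^{(1)}$ only through $-\Delta_x$ (the covariant derivatives $\nabla_A$ and the algebraic terms involve only $y$-differentiation and multiplication by $y$-dependent coefficients), the transformed operator becomes, for each fixed $\xi$,
\begin{equation*}
	L^{(1)}_\xi(\widehat B,\widehat\eta)=L^{(1),y}(\widehat B,\widehat\eta)+|\xi|^2(\widehat B,\widehat\eta),
\end{equation*}
where $L^{(1),y}$ is exactly the two-dimensional linearized Ginzburg-Landau operator about the $+1$ vortex acting in the $y$ variables.

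The key step is then a stability/coercivity argument. Theorem 5.1 states that the $\pm1$-vortex is stable, i.e. the second variation of the energy — whose associated operator is precisely $L^{(1),y}$ — is nonnegative, and moreover the only elements of its kernel come from the symmetries (translations in $y$ and the gauge direction), all of which must be excluded by the decay encoded in $\|\cdot\|_*'$ (the weight $\rho(P)^2 e^{\delta r}$ forces genuine decay, ruling out the bounded, non-decaying translation and gauge kernels). Hence $L^{(1),y}$ has a spectral gap on the relevant weighted/decaying class, so $L^{(1),y}+|\xi|^2$ is strictly positive for every $\xi$, and in particular injective on the space of admissible $y$-profiles. Therefore $\widehat{(B,\eta)}(\xi,\cdot)=0$ for a.e. $\xi$, which gives $(B,\eta)=0$ by Fourier inversion.

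The main obstacle I anticipate is making the Fourier-transform step rigorous at the level of the weighted norms: the norm $\|\cdot\|_*'$ is an $L^q$-type weighted norm with $1<q<2$, not an $L^2$ norm, so the partial Fourier transform in $x$ is not an isometry and one must justify that a solution with finite $\|\cdot\|_*'$ norm does admit a well-defined (tempered-distribution) Fourier transform in $x$ and that the equation transforms as claimed. A clean way around this is to instead work directly: multiply the equation by a suitable test function, integrate by parts in $x$ over large balls, and use the nonnegativity of the quadratic form of $L^{(1),y}$ together with the $|\nabla_x|^2$ term to conclude $\nabla_x(B,\eta)=0$ and that the $y$-profile is a decaying kernel element, hence zero. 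One also needs to confirm that the decay rate $\delta<\min\{m_\lambda,1\}$ built into the norm indeed excludes all of the symmetry-generated kernel of $L^{(1),y}$; this follows from the exponential convergence $U,V\to1$ with rates $m_\lambda$ and $1$, which controls the decay of the translation and gauge kernels and shows they fail to lie in the weighted space. The remaining verifications — self-adjointness of $L^{(1),y}$ on the weighted space, that its quadratic form is the second variation appearing in Theorem 5.1, and the elliptic estimates upgrading weak vanishing to $(B,\eta)\equiv0$ — are routine.
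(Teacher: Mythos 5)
There is a genuine gap, and it sits exactly at the step where you claim that the weight in $\|\cdot\|'_*$ ``rules out the bounded, non-decaying translation and gauge kernels.'' For the magnetic vortex the gauge-corrected translation zero modes are $T_k=(\partial_k A-\nabla A_k,\ \partial_k\psi-iA_k\psi)$ in the two $y$-directions. Their complex components are the components of $\nabla_A\psi^{(1)}=U'e^{i\phi}\hat r+i\,r^{-1}U(1-V)e^{i\phi}\hat\phi$ and their one-form components are $\pm V'/r$, all of which decay like $e^{-\min(m_\lambda,1)r}$ by the asymptotics of $U,V$ recalled in Section 4. Since the norm uses $\delta<\min(m_\lambda,1)$, these modes have \emph{finite} $\|\cdot\|'_*$ norm. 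Moreover $LT_k=0$ (difference of a translation mode and an infinitesimal gauge mode, both annihilated by the second variation) and $T^*T_k=0$ (a direct computation using $d^*A^{(1)}=0$ and the second Ginzburg--Landau equation), so $L^{(1)}T_k=(L+TT^*)T_k=0$. Viewed as $x$-independent fields on $\mathbb R^2\times\mathbb R^2$ they are therefore nonzero kernel elements of $L^{(1)}$ with finite norm: the ``spectral gap on the decaying class'' you invoke does not exist, and the Fourier argument fails precisely at $\xi=0$ (with no decay in $x$ the transform is only a tempered distribution in $\xi$, and a distribution supported at $\xi=0$ tensored with a kernel mode, e.g.\ $\widehat{T_k}=\delta_0(\xi)\otimes T_k(y)$, is exactly what survives). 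The same objection defeats your proposed ``direct'' variant, whose last step again assumes that the decaying kernel of the two-dimensional operator is trivial.

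Consequently the statement as literally written is false, and it is only ever used in the paper together with the orthogonality conditions \eqref{kernel1}--\eqref{kernel2} against $T_1,T_2$ (see case 1 of the proof of Theorem \ref{thm est}); that extra hypothesis is what must be added for your scheme to close. With it, your outline is the intended argument: reduce by partial Fourier transform in $x$ (or by the quadratic-form/integration-by-parts variant) to the $y$-problem, use the stability result of \cite{G} to get nonnegativity of the second variation, identify the exponentially decaying kernel of the two-dimensional operator with $\mathrm{span}\{T_1,T_2\}$ via the nondegeneracy results there, and then invoke orthogonality to kill these remaining modes. Note that the exponential decay of $T_1,T_2$ is not a nuisance to be argued away: it is precisely the reason the paper can use $\zeta_1T_1,\zeta_1T_2$ as the approximate kernel in the Lyapunov--Schmidt reduction.
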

Here $\|\eta\|'_* :=  \sup_{P} e^{\delta |y|}\|\eta\|_{W^{2,q}(B_1(P))}$. With this  , we can give the most important result in this section.
\begin{theorem}\label{thm est}
	Let $\epsilon > 0$ be small. Suppose $\|(B ,\eta)\|_* < \infty$, $\|(D, h)\|_{**} < \infty$, and

	\begin{numcases}{}
		\mathbb L (B, \eta) = (D, h),\\
		\label{kernel1}\int_{r, \phi  } \zeta_1<B, \partial_3A - \nabla A_3> + \zeta_1Re[\bar \eta(\partial_3\psi - iA_3\psi )] = 0,\\
		\label{kernel2}\int_{r , \phi }\zeta_1<B, \partial_4A - \nabla A_4> + \zeta_1Re[\bar \eta(\partial_4\psi - iA_4\psi )] = 0,
	\end{numcases}
for any $(s, \theta)$.
Then $\|(B, \eta)\|_{**} \le C\|(D, h)\|_{*}$, where $C$ is independent of $\epsilon$, $D$ and $h$.
\end{theorem}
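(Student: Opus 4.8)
\emph{Plan of proof.} I would argue by contradiction through a blow-up (concentration--compactness) analysis, the standard device for obtaining a priori bounds on the linearized operator in an infinite dimensional Lyapunov--Schmidt scheme. Assume the estimate fails; then there exist $\epsilon_n\to 0$, fields $(B_n,\eta_n)$ with $\|(B_n,\eta_n)\|_{**}=1$, and $(D_n,h_n):=\mathbb L_n(B_n,\eta_n)$ with $\|(D_n,h_n)\|_*\to 0$, all satisfying \eqref{kernel1}--\eqref{kernel2} for every $(s,\theta)$. Since $\mathbb L$ is a uniformly elliptic second order system with $\epsilon$-uniformly bounded lower order coefficients, and since the weights $\rho$ and $r$ (the latter only Lipschitz, by the Fermi-coordinate lemmas of Section 3) vary by a bounded factor on unit balls, interior $W^{2,q}$ estimates promote $\|(B_n,\eta_n)\|_{**}=1$ to $\|(B_n,\eta_n)\|_*\le C$. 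Choose points $P_n$ with $\rho(P_n)^2e^{\delta r(P_n)}\|(B_n,\eta_n)\|_{L^q(B_1(P_n))}\ge\tfrac12$, rescale the unknowns by $\rho(P_n)^2e^{\delta r(P_n)}$ and translate so that $P_n$ sits at the origin; call the result $(\tilde B_n,\tilde\eta_n)$. It solves $\mathbb L_n(\tilde B_n,\tilde\eta_n)=(\tilde D_n,\tilde h_n)$ with $\|(\tilde D_n,\tilde h_n)\|_{L^q(B_R)}\to 0$ for each fixed $R$, while, because the weights are slowly varying and $r$ is $1$-Lipschitz, $\|(\tilde B_n,\tilde\eta_n)\|_{L^q(B_R)}\le Ce^{\delta R}$ and $\|(\tilde B_n,\tilde\eta_n)\|_{L^q(B_1)}\ge\tfrac12$.

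I would then split according to $r(P_n)$. \textbf{If $r(P_n)\to\infty$}, the points escape the tube around $\Gamma_{\epsilon_n}$; in the region $\{r\ge R_0\}$ one has $W,Z=1+O(e^{-\delta r})$, so, writing $\eta=e^{i\varphi}(\zeta_1+i\zeta_2)$ away from the nodal set, $\mathbb L_n$ is a small perturbation of the constant-coefficient, strictly positive-mass system $(-\Delta B+B,\,-\Delta\zeta_1+\lambda\zeta_1,\,-\Delta\zeta_2+\zeta_2)$. Extracting a strong $L^q_{loc}$ limit (Rellich), one gets a solution of this limit system on $\mathbb R^4$ with growth at most $e^{\delta R}$ on $B_R$; it must vanish by a Phragm\'en--Lindel\"of comparison against the radial decaying solutions of $-\Delta+1$ and $-\Delta+\lambda$, and it is exactly the hypothesis $\delta<\min(m_\lambda,1)$ that makes this comparison conclusive. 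This contradicts $\|(\tilde B_n,\tilde\eta_n)\|_{L^q(B_1)}\ge\tfrac12$.

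\textbf{If $r(P_n)$ stays bounded}, then $P_n$ lies within bounded distance of its (unique, by Lemma \ref{lemma3}) nearest point $Q_n\in\Gamma_{\epsilon_n}$, and I would work in Fermi coordinates centred at $Q_n$. Since the second fundamental form of $\Gamma_{\epsilon_n}$ is $O(\epsilon_n\rho^{-3})\to 0$ and the approximate solution equals $(V(\tilde r)\nabla\tilde\phi,\,U(\tilde r)e^{i\tilde\phi})$ there, $\Gamma_{\epsilon_n}$ flattens to a $2$-plane $\mathbb R^2_x\times\{0\}$ and $\mathbb L_n\to L^{(1)}$ in $C_{loc}$, with $L^{(1)}$ the operator of Corollary \ref{linearized operator} attached to the $\pm1$ vortex $(A^{(1)}(y),\psi^{(1)}(y))$; this works whether $\rho(P_n)$ is bounded (concentration near the waist of $\Gamma_{\epsilon_n}$) or $\rho(P_n)\to\infty$ (concentration far out, where $\Gamma$ is asymptotic to one of the planes $\Pi_j$), only the ambient scale on which the surface is nearly flat changing. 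The limit $(B_\infty,\eta_\infty)$ solves $L^{(1)}(B_\infty,\eta_\infty)=0$; the weight $e^{\delta r}$ becomes $e^{\delta|y|}$, so $\|(B_\infty,\eta_\infty)\|'_*<\infty$, while $\|(B_\infty,\eta_\infty)\|_{L^q(B_1)}\ge\tfrac12$. Passing the orthogonality conditions \eqref{kernel1}--\eqref{kernel2} to the limit kills the two dimensional translation kernel of $L^{(1)}$, so by the nondegeneracy statement of Corollary \ref{linearized operator} we get $(B_\infty,\eta_\infty)\equiv 0$, again a contradiction. Hence the estimate holds with $C$ independent of $\epsilon$, $D$, $h$.

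\textbf{The main obstacle} is to manage the two weights $\rho$ and $r$ simultaneously: one must check that they are slowly varying on unit balls so that the $**$- and $*$-norms genuinely localise, that the single rescaling by $\rho(P_n)^2e^{\delta r(P_n)}$ is compatible with the geometry in \emph{both} degeneration regimes (near and far from $\Gamma_{\epsilon_n}$, and near the waist as well as near the asymptotic planes $\Pi_j$), and that the $r\to\infty$ Liouville step really exploits $\delta<\min(m_\lambda,1)$. Once this geometric bookkeeping is in place, the blow-up near $\Gamma_{\epsilon_n}$ reduces directly to Corollary \ref{linearized operator}.
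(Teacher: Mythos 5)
Your proposal is correct and follows essentially the same route as the paper: a contradiction/blow-up argument that splits according to whether $r(P_j)$ stays bounded (rescale, pass to the vortex operator $L^{(1)}$, and use the orthogonality conditions \eqref{kernel1}--\eqref{kernel2} together with Corollary \ref{linearized operator}) or $r(P_j)\to\infty$ (rescale by the full weight and conclude via a Liouville argument for the constant-coefficient positive-mass limit system, using $\delta<\min(m_\lambda,1)$). Your additional bookkeeping on the slowly varying weights and the $\rho(P_n)\to\infty$ subcase is a harmless refinement of the same argument.
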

\begin{proof}
	By  $L^p$ theory, it suffices to show that
	\begin{equation*}
		\begin{aligned}
			&\sup_{ P} \rho^2e^{\delta r}( \|B\|_{L^q(B_1(P))} + \|\eta\|_{L^q(B_1(P))}) \\
		 &\le C\sup_{ P} \rho^2e^{\delta r}( \|D\|_{L^q(B_2(P))} + \|h\|_{L^q(B_2(P))}).
		\end{aligned}
	\end{equation*}
	To prove this, we argue by contradiction and and assume that there exists a sequence $\epsilon_j$, $(B^{(j)}, \eta^{(j)})$, $(D^{(j)}, h^{(j)})$, such that:
	\begin{equation*}
		\begin{aligned}
			&\sup_{ P} \rho^2e^{\delta r}( \|B^{(j)}\|_{L^q(B_1(P))} + \|\eta^{(j)}\|_{L^q(B_1(P))}) = 1,
		\end{aligned}
	\end{equation*}
	\begin{equation*}
		\begin{aligned}
			&\sup_{ P} \rho^2e^{\delta r}( \|D^{(j)}\|_{L^q(B_2(P))} + \|h^{(j)}\|_{L^q(B_2(P))}) \rightarrow 0,
		\end{aligned}
	\end{equation*}
	where $\mathbb L(B^{(j)}, \eta^{(j)}) = (D^{(j)}, h^{(j)})$.
	
	By the decay of $(B^{(j)}, \eta^{(j)})$, we know that $(B^{(j)}, \eta^{(j)}) \in W^{2,q}(\mathbb R^4) \subset\subset H^{1}(\mathbb R^4)$. So there exists a subsequence, such that $(B^{(j)}, \eta^{(j)})$ converges to some $(B^{(0)}, \eta^{(0)})$ in $H^1(\mathbb R^4)$. For simplicity, we still denote them as $(B^{(j)}, \eta^{(j)})$.
	
	 So for any $j$, there exists $P_j$, such that:
	 \begin{equation*}
	 	\|B^{(j)}\|_{L^q(B_1(P_j))} + \|\eta^{(j)}\|_{L^q(B_1(P_j))} \ge \frac{1}{2}\rho^{-2}e^{-\delta r}(P_j).
	 \end{equation*}	
	
	As a result, there are three cases for the convergence of $r(P_j)$:
	\begin{itemize}
		\item 1. $r(P_j)$  will converge to some finite constant $r_0 > 0$ , then we can always assume $P_j$ converges to $P_0$.
		At this case, we assume that $x_j \in \Gamma_{\epsilon_j}$ and $P_j = (x_j, y_j), y_j \in \mathbb R^2$ and define:
		\begin{equation*}
			\begin{aligned}
				\tilde B^{(j)}(x, y) = \rho(P_j)^2 B^{(j)}(x+x_j, y),\\
				\tilde \eta^{(j)} (x, y) = \rho(P_j)^2\eta^{(j)}(x + x_j, y),
			\end{aligned}
		\end{equation*}
		then we have
		\begin{equation}\label{case1}
			\|\tilde B^{(0)}\|_{L^q(B_1(0, y_0))} + \|\tilde\eta^{(0)}\|_{L^q(B_1(0, y_0))} \ge \frac{1}{4}e^{-\delta r_0}(0, y_0).
		\end{equation}
		Then let $j \rightarrow \infty$	, we get:
		\begin{equation*}
			\lim_{j\rightarrow \infty} \mathbb L(\tilde B^{(j)}, \tilde\eta^{(j)})  = L^{(1)}(\tilde B^{(0)}, \tilde\eta^{(0)}) = 0, \text{in } \mathbb R^2 \times  \mathbb R^2.
		\end{equation*}
		And because of \eqref{kernel1} and \eqref{kernel2}, as well as Corollary  \ref{linearized operator}, we know that $(\tilde B^{(0)}, \tilde\eta^{(0)}) = 0$, which contradicts to \eqref{case1}.
		
		\item 2. $r(P_j)$  will tend to infinity.
		We assume that $x_j \in \Gamma_{\epsilon_j}$ and $P = (x_j, y_j), y_j \in \mathbb R^2$ and define a sequence of pairs $(\tilde B^{(j)}, \tilde\eta^{(j)})$:
		\begin{equation*}
			\begin{aligned}
				\tilde B^{(j)}(x, y) = \rho(P_j)^2 e^{\delta r(P_j)}\tilde B^{(j)}(x + x_j, y + y_j),\\
				\tilde \eta^{(j)}(x, y) = \rho(P_j)^2 e^{\delta r(P_j)}\tilde \eta^{(j)}(x + x_j, y + y_j).\\
			\end{aligned}
		\end{equation*}
		Then $(\tilde B^{(j)}, \tilde\eta^{(j)})$  will converge to some $(\tilde B^{(0)}, \tilde\eta^{(0)})$, which satisfies:
		
		\begin{equation*}
			\begin{cases}
				-\Delta   B + B = 0, \text{ in  } \mathbb R^2 \times \mathbb R^2, \\
				-\Delta \eta + \frac{1}{2}(\lambda - 1)\bar {\eta} + \frac{1}{2}(\lambda + 1)\eta  = 0, \text{ in  } \mathbb R^2 \times \mathbb R^2,\\
				 \sup_P e^{-\delta |y|}(\| B\|_{L^q(B_3(P))} + \|\eta\|_{L^q(B_3(P))}) \le 1, \text{ in  } \mathbb R^2 \times \mathbb R^2.
			\end{cases}
		\end{equation*}
		Thus $(\tilde B^{(0)}, \tilde\eta^{(0)}) = 0$. But we have:
		\begin{equation*}
			 (\| \tilde B^{(0)}\|_{L^q(B_3(0))} + \|\tilde\eta^{(0)}\|_{L^q(B_3(0))}) >  \frac{1}{4}.\\
		\end{equation*}
		This is a contradiction.
	\end{itemize}

\end{proof}	
		
	Denote $T_1 = \zeta_1(\partial_3A - \nabla A_3, \partial_3\psi - iA_3\psi )$, $T_2 = \zeta_1(\partial_4A - \nabla A_4, \partial_4\psi - iA_4\psi )$.
	Then we have:
	\begin{equation*}
		\begin{aligned}
			\int_{r, \phi }<T_1, T_2> = O(\epsilon^2\rho^{-2}).
		\end{aligned}
	\end{equation*}
	
	Define orthogonal projections:
	\begin{equation*}
		\begin{aligned}
			\pi &:= L^2 -\text{ orthogonal projection onto span}\{T_1, T_2\},\\
		\pi^{\perp} &:= 1 - \pi.
		\end{aligned}
	\end{equation*}
		
	Then we will solve the projected linear problem:
	\begin{theorem}\label{existence theorem}
		 If  $\|(D, h)\|_{**} < \infty$ and $\pi(D, h) = 0$, then for $\epsilon$ sufficiently small there exists a unique solution $(B, \eta)$ to
		 \begin{equation}\label{equation for priori}
		 	\begin{cases}
		 		\pi^{\perp}\mathbb L(B, \eta) = (D, h),\\
		 		\int_{r, \phi} <(B, \eta), T_j> = 0, \text{ for } j = 1, 2,
		 	\end{cases}
		 \end{equation}
		  and we have
		 \begin{equation*}
		 	\|(B, \eta)\|_* \le C \|(D, h)\|_{**},
		 \end{equation*}
		 with $C$ only depending on $q$, $\gamma$, $\delta$.
	\end{theorem}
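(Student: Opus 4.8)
The plan is to derive the result from the a priori estimate of Theorem \ref{thm est} together with a Fredholm/approximation argument; the only genuinely new point is keeping track of the Lagrange multipliers generated by the projection $\pi^\perp$.

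\textbf{Step 1: a priori estimate for \eqref{equation for priori}.} Let $(B,\eta)$ be any solution of \eqref{equation for priori}. Since $\pi(D,h)=0$, the first equation is equivalent to
\begin{equation*}
	\mathbb{L}(B,\eta)=(D,h)+c_1T_1+c_2T_2 ,
\end{equation*}
where for each fixed $(s,\theta)$ the pair $(c_1,c_2)=(c_1(s,\theta),c_2(s,\theta))$ is obtained by pairing the identity with $T_1$ and $T_2$ in the fibre integral $\int_{r,\phi}$ and inverting the corresponding $2\times2$ Gram matrix. That matrix has uniformly bounded inverse: the diagonal entries $\int_{r,\phi}\langle T_j,T_j\rangle$ equal, up to $O(\epsilon)$ and exponentially small corrections coming from $\zeta_1$, the positive fibre energy of the $\pm1$-vortex translational mode, while the off-diagonal entry is $O(\epsilon^2\rho^{-2})$. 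To bound $c_j$ one pairs $\mathbb{L}(B,\eta)$ with $T_j$ and integrates by parts: using that $\mathbb{L}$ is formally self-adjoint, $\int\langle\mathbb{L}(B,\eta),T_j\rangle=\int\langle(B,\eta),\mathbb{L}T_j\rangle$ up to boundary terms supported where $\zeta_1$ varies, i.e. where $r\sim(\epsilon\sqrt{\sin2s})^{-1}$ and the vortex kernel is exponentially small. Moreover $T_j=\zeta_1\cdot(\text{translational kernel of }L^{(1)})$, so $\mathbb{L}T_j=\zeta_1(\mathbb{L}-L^{(1)})(\text{kernel})+[\mathbb{L},\zeta_1](\text{kernel})$, and $(\mathbb{L}-L^{(1)})(\text{kernel})=O(\epsilon)$ in the relevant weighted norm — this is essentially the computation behind Theorem \ref{Jacobi}. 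Hence $\|c_1T_1+c_2T_2\|_{**}\le C\epsilon\|(B,\eta)\|_*+C\|(D,h)\|_{**}$. The orthogonality conditions in \eqref{equation for priori} are precisely hypotheses \eqref{kernel1}--\eqref{kernel2} of Theorem \ref{thm est}, so that theorem gives
\begin{equation*}
	\|(B,\eta)\|_*\le C\,\|(D,h)+c_1T_1+c_2T_2\|_{**}\le C\|(D,h)\|_{**}+C\epsilon\|(B,\eta)\|_* .
\end{equation*}
For $\epsilon$ small the last term is absorbed and $\|(B,\eta)\|_*\le C\|(D,h)\|_{**}$; in particular \eqref{equation for priori} has at most one solution.

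\textbf{Step 2: existence.} I would produce the solution by approximation on bounded domains. On the ball $B_R\subset\mathbb R^4$ solve the analogous projected problem with zero boundary data: find $(B,\eta)\in W^{2,q}(B_R)\cap W^{1,q}_0(B_R)$ with $\pi_R^\perp\mathbb{L}(B,\eta)=\pi_R^\perp(D,h)$ and $\int_{B_R}\langle(B,\eta),T_j\rangle=0$, where $\pi_R$ is the $L^2(B_R)$-projection onto $\mathrm{span}\{T_1,T_2\}$. After the gauge-fixing modification $\mathbb{L}$ is uniformly elliptic and equals $-\Delta\oplus(-\Delta)$ plus a zeroth-order term which, on $B_R$ with Dirichlet condition, is a compact perturbation; restricting to the codimension-two subspace cut out by the two constraints and composing with $\pi_R^\perp$ keeps the operator Fredholm of index zero, and Step 1 — which for $R$ large is insensitive to $\partial B_R$ because the data and the solution decay like $\rho^{-2}e^{-\delta r}$ — forces a trivial kernel. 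Hence the operator is an isomorphism, yielding $(B_R,\eta_R)$ with $\|(B_R,\eta_R)\|_*\le C\|(D,h)\|_{**}$ uniformly in $R$. Letting $R\to\infty$ and extracting a subsequence convergent in $W^{2,q}_{\mathrm{loc}}$ gives a limit $(B,\eta)$ solving $\pi^\perp\mathbb{L}(B,\eta)=(D,h)$ on $\mathbb R^4$, satisfying the orthogonality conditions, and obeying $\|(B,\eta)\|_*\le C\|(D,h)\|_{**}$ by lower semicontinuity of the weighted norm; uniqueness is Step 1. Equivalently, since $\mathbb{L}$ is self-adjoint, elliptic, and coercive at infinity (its zeroth-order coefficients tend to positive-definite limits for every $\lambda>0$), it is Fredholm of index zero on the adapted weighted spaces, and Step 1 together with self-adjointness identifies the cokernel of $\pi^\perp\mathbb{L}$ on $\{(B,\eta):\int_{r,\phi}\langle(B,\eta),T_j\rangle=0\}$ with its trivial kernel.

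\textbf{Main difficulty.} The delicate part is the multiplier estimate in Step 1: one must know that $T_1,T_2$ are accurate enough approximate kernels of $\mathbb{L}$, that is, $\mathbb{L}T_j=O(\epsilon)$ in the $**$-norm modulo $\mathrm{span}\{T_1,T_2\}$. This rests on the explicit identification, carried out in Section 4, of the linearized error projected onto the kernel with the Jacobi operator of $\Gamma$, and on the exponential smallness of $[\mathbb{L},\zeta_1]$ applied to the vortex kernel. Once these are in hand, absorbing $C\epsilon\|(B,\eta)\|_*$ and the Fredholm bookkeeping of Step 2 are routine.
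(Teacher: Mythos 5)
Your proposal is correct and follows essentially the same route as the paper: an a priori bound obtained by estimating the Lagrange multipliers $c_1,c_2$ through pairing with $T_j$ and invoking Theorem \ref{thm est} to absorb the resulting $O(\epsilon)$ (the paper gets $O(\epsilon^2)$) term, combined with existence on bounded domains via a Fredholm-alternative argument with trivial kernel and a limit $R\to\infty$. The only cosmetic difference is that the paper sets up the bounded-domain problem variationally via Riesz representation and a compact operator, which is just one concrete implementation of your "Fredholm of index zero" step.
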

	\begin{proof}
		
		The proof is standard. We first solve the equation in bounded domains.
		\begin{equation}\label{bounded equ}
			\begin{cases}
				\pi^{\perp}\mathbb L(B, \eta) = (D, h) \text{  in } B_M(0),\\
				B, \eta = 0 \text{  on } \partial B_M(0).
			\end{cases}
		\end{equation}
		And define the space
		\begin{equation*}
			\mathcal H := \{w = (B, \eta): B, \eta \in H^1_0(B_M), \pi^{\perp}(B, \eta) = 0\}.
		\end{equation*}
		
		We equip $\mathcal H$ with the inner product:
		\begin{equation*}
			[w_1, w_2] := \int_{B_M(0)} <\nabla B_1, \nabla B_2> + Re\int_{B_M(0)}\nabla\eta_1\cdot\nabla\eta_2.
		\end{equation*}
		Then $\mathcal H$ is a Hilbert space.
		
		The equation can be rewritten in its variational form. Namely, for any $\tilde w = (\tilde B, \tilde \eta) \in \mathcal H$:
		\begin{equation*}
			\begin{aligned}
				&\int_{B_M(0)} <\nabla B, \nabla \tilde B> + Re\int_{B_M(0)}\overline{\nabla_A\eta}\cdot\nabla_A\tilde\eta \\
				&+ \int_{B_M(0)} <2Im(\overline{\nabla_A\psi}\cdot \eta), \tilde B> + \int_{B_M(0)}<B|\psi|^2, \tilde B> - Re\int_{B_M(0)}2i<B, \overline{\nabla_A\psi}>\tilde\eta \\
				&+ Re\int_{B_M(0)}\frac{1}{2}(\lambda - 1)\bar \psi^2\eta\tilde\eta + Re\int_{B_M(0)}(\lambda + \frac{1}{2})|\psi|^2\bar\eta\tilde\eta - Re\int_{B_M(0)}\frac{\lambda}{2}\bar\eta\tilde\eta \\
				&= \int_{B_M(0)}<D, \tilde B> + Re\int_{B_M(0)}\bar h\tilde\eta.
			\end{aligned}
		\end{equation*}
		
		Then we define the linear form $<k(x)w, \cdot>$ on $\mathcal H$:
		\begin{equation*}
			\begin{aligned}
				<k(x)w, \tilde w> :=& \int_{B_M(0)} <2Im(\overline{\nabla_A\psi}\cdot \eta), \tilde B> + \int_{B_M(0)}<B|\psi|^2, \tilde B> \\
				&- Re\int_{B_M(0)}2i<B, \overline{\nabla_A\psi}>\tilde\eta \\
				&+ Re\int_{B_M(0)}\frac{1}{2}(\lambda - 1)\bar \psi^2\eta\tilde\eta + Re\int_{B_M(0)}(\lambda + \frac{1}{2})|\psi|^2\bar\eta\tilde\eta - Re\int_{B_M(0)}\frac{\lambda}{2}\bar\eta\tilde\eta. \\
			\end{aligned}
		\end{equation*}
		Similarly, we denote $z := (D, h)$ and $<z, \cdot>$ the linear form defined by:
		\begin{equation*}
			\begin{aligned}
				<z, \tilde w> := \int_{B_M(0)}<D, \tilde B> + Re\int_{B_M(0)}\bar h\tilde\eta.
			\end{aligned}			
		\end{equation*}
		Then the equation can be written as
		\begin{equation*}
			[w, \tilde w] + <k(x)w, \tilde w> = <z, \tilde w>, \forall \tilde w\in \mathcal H.
		\end{equation*}
		
		Using the Riesz representation theorem, we can find a bounded linear operator $K$ on $\mathcal H$, and an element $Z$ of $\mathcal H$ depending linearly on $z$. Thus the equation has the following operational form:
		\begin{equation}\label{operator equation}
			w + K(w) = z.
		\end{equation}
		Besides because of the compact Sobolev injection $H^1_0(B_M(0)) \subset\subset L^2(B_M(0))$, we get that $K$ is compact. Then by Fredholm's alternative, we deduce the existence of $w$ satisfying \eqref{operator equation} if the homogeneous equation only has the trivial solution. In this case, we have $z = 0$.
		
		Now let us rewrite the equation \eqref{equation for priori}:
		\begin{equation*}
			\mathbb L(B, \eta) = c_1(s,\theta)T_1 + c_2(s, \theta)T_2.
		\end{equation*}

		We will estimate $c_1$ and $c_2$. Here we always "modify" $B_M(0)$, such that $ \{(s, \theta, r, \phi): r < 3\} \subset B_M(0)$ for all $(s, \theta, 0, 0) \in B_M(0)$. (For example, we can use the Fermi coordinates, and use the region $\{\rho < M\}$).
		 Then multiply the equation with $T_1$ and integral on $ \{(s, \theta, r, \phi): r < 3\}$ for  any $(s, \theta) \in \Gamma_\epsilon \cap B_M(0)$.
		\begin{equation*}
			\begin{aligned}
				c_1 = \frac{1}{c^*_1}\int_{ r, \phi}<\mathbb L(B, \eta), T_1> + O(\epsilon^2\rho^{-2})c_2,\\
				c_2 = \frac{1}{c^*_2}\int_{ r, \phi}<\mathbb L(B, \eta), T_2> + O(\epsilon^2\rho^{-2})c_1,\\
			\end{aligned}	
		\end{equation*}
		where\begin{equation*}
			c^*_j = \int_{ r, \phi} <T_j, T_j>, j = 1,2.
		\end{equation*}
		
		So we only need to estimate $\mathbb LT_1$. Note that there are no first order differentiation items. So we can always divide $\Delta$ and $\Delta_A$ into two parts individually.
		\begin{equation*}
			\begin{aligned}
				\mathbb L := \mathbb L_1 + \mathbb L_2,\\
			\end{aligned}
		\end{equation*}
		where
		\begin{equation*}
			\mathbb L_1(B,\eta) = \begin{pmatrix}
				-\Delta B + \Delta_{\Gamma_\epsilon} B\\
				-\Delta_A\eta + \Delta_{A, \Gamma_\epsilon}\eta
			\end{pmatrix}.
		\end{equation*}
		Then for any fixed $(s, \theta)$
		\begin{equation*}
			\begin{aligned}
				\int_{r, \phi}<\mathbb L_2(B, \eta), T_1> &= \int_{r, \phi}<(B, \eta), \mathbb L_2T_1>\\
				&+ O(\epsilon^2\rho^{-2})\int_{r < \frac{1}{2\epsilon\sqrt{\sin2s}} }|(B, \eta)|^q + |\nabla (B, \eta)|^q.
			\end{aligned}
		\end{equation*}
		By the condition \eqref{equation for priori}, we know
		 \begin{equation*}
		 	\begin{aligned}
		 		\|\int_{r, \phi}<\mathbb L_1(B, \eta), T_1>\|_{0, 2, q} = O(\epsilon^2)\|(B, \eta)\|_*.
		 	\end{aligned}
		 \end{equation*}
		Note that when $\epsilon$ tends to zero, $\mathbb L_2T_1$ also tends to zero. So we only need to consider the $O(\epsilon)$ items in $\mathbb L_2T_1$. Recalling the calculation in Section 1, we finally get:
		\begin{equation*}
			\mathbb L_2T_1 = O(\epsilon^2\rho^{-2}e^{-\delta r}).
	\end{equation*}
		Therefore
		\begin{equation*}
			\|c_j\|_{0,2,q} = O(\epsilon^2)\|(B, \eta)\|_*.
		\end{equation*}
		By Theorem \ref{thm est}, we have:
		\begin{equation*}
			\|(B,\eta)\|_{*} \le C\epsilon^2\|(B,\eta)\|_{*}.
		\end{equation*}
		So we get that $(B, \eta) = 0$ for the homogeneous equation. Then for any large enough $M$, we obtain the existence of a solution of \eqref{bounded equ}  satisfying:
		\begin{equation*}
			\|(B, \eta)\|_{*} \le C\|(D, h)\|_{**}.
		\end{equation*}
		with C independent of $M$. Note that in the previous arguments the norm $\|\cdot\|_*$, $\|\cdot\|_{**}$ are adapted to deal with the bounded domain situation. Similarly in the proof of Theorem \ref{thm est}, we can get a subsequence such that $(B, \eta) \rightharpoonup (B, \eta)$ in $H^1_{loc}(\mathbb R^4)$ with  $(B, \eta)$ solving \eqref{equation for priori} in $\mathbb R^4$.
		
	\end{proof}

	\section{A projected nonlinear problem}
	In this section we will solve the equation:
	\begin{equation}\label{nonlinear equ}
		\pi^{\perp} [F(u) + \mathbb Lw + N(w)] = 0.
	\end{equation}
	\begin{theorem}\label{thm of existence of the nonlinear problem}
		If $\epsilon$ is small enough, for any $f$, $g$ satisfying \eqref{disturbance} , there exists one $w = (B, \eta)$ such that \eqref{nonlinear equ} is true and $\pi(w) = 0$ for any $(s, \theta)$. And we have
		\begin{equation*}
			\|w\|_{*} \le C\epsilon^2.
		\end{equation*}
	\end{theorem}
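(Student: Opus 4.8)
The plan is to recast \eqref{nonlinear equ} as a fixed point equation and solve it by the contraction mapping principle, using the linear theory of Theorem~\ref{existence theorem} to invert the projected linearized operator $\pi^{\perp}\mathbb L$.

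First I would set up the solution operator. Given $w=(B,\eta)$ with $\pi(w)=0$, the pair $(D,h):=-\pi^{\perp}[F(u)+N(w)]$ automatically satisfies $\pi(D,h)=0$, so Theorem~\ref{existence theorem} produces a unique $\tilde w$ with $\pi(\tilde w)=0$ solving $\pi^{\perp}\mathbb L\tilde w=(D,h)$ together with the bound $\|\tilde w\|_{*}\le C\|(D,h)\|_{**}$. Write $\tilde w=\mathcal G(w)$. A fixed point of $\mathcal G$ is exactly a solution of \eqref{nonlinear equ} with $\pi(w)=0$, so it suffices to show $\mathcal G$ is a contraction on the ball $\mathcal B:=\{w:\pi(w)=0,\ \|w\|_{*}\le K\epsilon^{2}\}$ for a large fixed constant $K$ and all small $\epsilon$. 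This requires two estimates: the error bound $\|F(u)\|_{**}\le C\epsilon^{2}$ supplied by \eqref{error}, and the nonlinear bounds
\[
\|N(w)\|_{**}\le C\|w\|_{*}^{2},\qquad \|N(w_1)-N(w_2)\|_{**}\le C\big(\|w_1\|_{*}+\|w_2\|_{*}\big)\|w_1-w_2\|_{*}
\]
for $\|w\|_{*},\|w_j\|_{*}\le 1$. Granting these, for $w\in\mathcal B$ one has $\|\mathcal G(w)\|_{*}\le C(\epsilon^{2}+K^{2}\epsilon^{4})\le K\epsilon^{2}$ after fixing $K$ and shrinking $\epsilon$, so $\mathcal G(\mathcal B)\subset\mathcal B$; and $\|\mathcal G(w_1)-\mathcal G(w_2)\|_{*}\le CK\epsilon^{2}\|w_1-w_2\|_{*}<\tfrac12\|w_1-w_2\|_{*}$, so $\mathcal G$ is a contraction. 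The Banach fixed point theorem then gives the unique $w\in\mathcal B$, with $\|w\|_{*}\le C\epsilon^{2}$ and $\pi(w)=0$, as claimed.

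The remaining work is the nonlinear estimates. Inspecting \eqref{nonlinear items}, every term of $N(w)$ is at least quadratic in $w=(B,\eta)$; the coefficients built from the approximate fields $(\psi,A)$ are bounded, except near $\Gamma_{\epsilon}$ where $A$ and the covariant derivative $\nabla_A$ contribute an $O(\tilde r^{-1})$ factor through terms such as $A_j|\psi|^{2}$ and $B\cdot\nabla_A\eta$. I would estimate each product on a unit ball $B_1(P)$ by H\"older's inequality together with the Sobolev embeddings for $W^{2,q}$ on $B_1(P)$ (and $W^{2,q}\hookrightarrow W^{1,q^{*}}$ for the gradient nonlinearities $B\cdot\nabla_A\eta$ and $\overline{\eta}\,\nabla_A\eta$), then note that the weights $\rho^{2}e^{\delta r}$ combine correctly since $\rho\ge 1$ and the product of two decaying factors decays at least as fast as one of them; the $O(\tilde r^{-1})$ singularity in the core region is integrable precisely because $1<q<2$, which is the reason the $L^{q}$–based weighted norms $\|\cdot\|_{*},\|\cdot\|_{**}$ are used rather than H\"older norms. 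One also checks that $\pi$ is bounded on $\|\cdot\|_{**}$, so that $\pi^{\perp}[F(u)+N(w)]$ has finite $\|\cdot\|_{**}$ norm and Theorem~\ref{existence theorem} genuinely applies.

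The main obstacle I anticipate is exactly this last step: proving the quadratic and Lipschitz bounds on $N$ in the weighted Sobolev spaces uniformly in $\epsilon$, in particular tracking the $\rho$– and $r$–weights through the gradient terms and through the region near $\Gamma_{\epsilon}$ where the magnetic potential is singular, and confirming that the quadratic gain is genuinely $O(\|w\|_{*}^{2})$ there as well. Once those product estimates are in place, the rest is the routine Lyapunov–Schmidt bookkeeping already prepared in Sections 5 and 6.
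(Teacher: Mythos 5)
Your proposal is correct and follows essentially the same route as the paper: the authors also recast \eqref{nonlinear equ} as the fixed point equation $w=\mathbb L^{-1}[-\pi^{\perp}F(u)-\pi^{\perp}N(w)]$, verify $\|N(w)\|_{**}\le C\delta_0^2$ via the Sobolev embedding $W^{2,q}\hookrightarrow L^{\frac{4q}{4-2q}}$ and H\"older's inequality (which is exactly why the $L^q$-based norms with $1<q<2$ are used instead of H\"older norms), and run a contraction mapping argument combined with the error bound \eqref{error}. The only cosmetic difference is that they contract on a fixed small ball $B^{\perp}_{\delta_0}$ and extract $\|w\|_*\le C\epsilon^2$ afterwards from $\|w\|_*\le\|S(0)\|_*+\tfrac12\|w\|_*$, rather than working directly on a ball of radius $K\epsilon^2$.
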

	\begin{proof}
		
		The proof is standard. Here we always choose $1 < q <2$ is close to 2. For example, we can let $q = \frac{19}{10}$.  Different from the cases in \cite{Liu}, where Holder norm is widely used, we need to check that $\|N(w)\|_{**} \le C\delta_0^2$ if $\|w\|_* \le \delta_0$.
		
		By \eqref{nonlinear items}, we know that we only need to show:
		\begin{equation}\label{the L^q norm of N(w)}
			|\eta|^3, |\nabla\eta\cdot\eta| \le C\delta_0^2.
		\end{equation}
		Then by Sobolev embedding, we know $W^{2,q} \hookrightarrow W^{1, \frac{4q}{4 - q}}\hookrightarrow L^{\frac{4q}{4-2q}}$. Thus by Holder inequality, as long as $\frac{4}{3} \le q < 2$ and $\epsilon$ is small enough, \eqref{the L^q norm of N(w)} holds.

		When $\epsilon$ is small enough, by Theorem \ref{existence theorem}, $\mathbb L $ is invertible. Then we can rewrite the equation \eqref{nonlinear equ} as a fixed point equation:
		\begin{equation*}
			\begin{aligned}
				&\pi^{\perp}\mathbb Lw = - \pi^{\perp}F(u) - \pi^{\perp}N(w)\\
				\Rightarrow & w = S(w),
			\end{aligned}
		\end{equation*}
		where $S(w):= \mathbb L^{-1}[- \pi^{\perp}F(u) - \pi^{\perp}N(w)]$.
		
		Denote the ball $B^{\perp}_{\delta_0} = \{w: \|w\|_{*} < \delta_0\} \cap \{w: \pi(w)= 0\}$, where $\delta_0 > 0$ is  small. Then for $\epsilon$ small enough and any $w \in B^{\perp}_{\delta_0}$, we have:
		\begin{equation*}
			\begin{aligned}
				\|S(w)\|_{*}& \le C\| - \pi^{\perp}F(u) - \pi^{\perp}N(w)\|_{**},\\
				&\le C\|F(u)\|_{**} + C\delta_0^2\\
				&\le C\epsilon^2 + C\delta_0^2 \le \delta_0.
			\end{aligned}
		\end{equation*}
		Therefore $S(w)$ is also in $B^{\perp}_{\delta_0}$.
		Hence we get that for $w, w' \in B^{\perp}_{\delta_0}$
		\begin{equation*}
			\begin{aligned}
				\|S(w)- S(w')\|_{*} &= \|\mathbb L^{-1}\pi^{\perp}N(w) - \mathbb L^{-1}\pi^{\perp}N(w')\|_{**}\\
				&\le \frac{1}{2}\|w - w'\|_{*}.
			\end{aligned}
		\end{equation*}
		Thus the map $S$ is a contraction map and has a unique fixed point in $B^{\perp}_{\delta_0}$.
		Finally we will estimate the fixed point $w$:
		\begin{equation*}
			\begin{aligned}
				\|w\|_{*} &\le \|S(0)\|_{*} + \|S(0) - S(w)\|_{*}\\
				&\le C\|F(u)\|_{**} + \frac{1}{2}\|w\|_{*},\\
				\Rightarrow & \|w\|_{*}\le C\epsilon^2.
			\end{aligned}
		\end{equation*}
	\end{proof}
	
	In the last of the section, we show the Lipschitz continuity of $w$ with respect to $f$ and $g$.
	
	\begin{theorem}
		If $(f_j, g_j), j =1,2$ are two pairs of functions satisfying
		\begin{equation*}
			\|(f_j, g_j)\|_{2,1, q} \le C,
		\end{equation*}
		then the two solutions $w_j$ to \eqref{nonlinear equ} with respect to $(f_j, g_j)$ in Theorem \ref{thm of existence of the nonlinear problem} satisfy:
		\begin{equation*}
			\|w_1 - w_2\|_{*} \le C\epsilon^2\|(f_1, g_1) - (f_2, g_2)\|_{2, 1, q}.
		\end{equation*}
	\end{theorem}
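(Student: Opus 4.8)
The plan is to use that each $w_j$ is the unique fixed point in the ball $B^{\perp}_{\delta_0}$ of
\begin{equation*}
S_j(w):=\mathbb L_j^{-1}\bigl[-\pi_j^{\perp}F(u_j)-\pi_j^{\perp}N_j(w)\bigr],
\end{equation*}
the subscript $j$ recording the dependence of the approximate solution $u_j=(W_je^{i\varphi_j},Z_j\nabla\varphi_j)$, of the linearized operator $\mathbb L_j$, of the projection $\pi_j^{\perp}$ and of the nonlinear map $N_j$ on the datum $(f_j,g_j)$; this dependence enters only through the shifted profiles $U,V,\phi$ evaluated at $(a-\epsilon f_j,b-\epsilon g_j)$, while the cut-offs $\zeta_1,\zeta_2,\chi$ are independent of $(f,g)$. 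Splitting
\begin{equation*}
w_1-w_2=\bigl(S_1(w_1)-S_1(w_2)\bigr)+\bigl(S_1(w_2)-S_2(w_2)\bigr)
\end{equation*}
and using the contraction estimate established in Theorem~\ref{thm of existence of the nonlinear problem}, which gives $\|S_1(w_1)-S_1(w_2)\|_*\le\frac12\|w_1-w_2\|_*$, it remains to prove
\begin{equation*}
\|S_1(w_2)-S_2(w_2)\|_*\le C\epsilon^2\|(f_1,g_1)-(f_2,g_2)\|_{2,1,q}.
\end{equation*}

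I would then estimate, ingredient by ingredient, how far apart $S_1(w_2)$ and $S_2(w_2)$ are. The basic input is that, since $U',V'$ and their derivatives are bounded and exponentially decaying and appear only on the region where the profiles are not yet frozen to constants, one has $\|u_1-u_2\|+\|\nabla(u_1-u_2)\|\le C\epsilon\|(f_1,g_1)-(f_2,g_2)\|_{2,1,q}$ in the weighted $L^q$ norms underlying $\|\cdot\|_{*}$ and $\|\cdot\|_{**}$ (here the $W^{2,q}_1$ bound on $(f_j,g_j)$ is used to control the second derivatives that enter $\Delta\psi$ and $d^*dA$). From this: first, $\mathbb L_1^{-1}-\mathbb L_2^{-1}=\mathbb L_1^{-1}(\mathbb L_2-\mathbb L_1)\mathbb L_2^{-1}$, and $\mathbb L_2-\mathbb L_1$ is a zeroth-order operator whose coefficients are differences of $|\psi_j|^2$, $\overline{\nabla_{A_j}\psi_j}$ and $\psi_j^2$, hence of norm $O(\epsilon)\|(f_1,g_1)-(f_2,g_2)\|_{2,1,q}$ from $\|\cdot\|_*$ to $\|\cdot\|_{**}$; combined with $\|\mathbb L_j^{-1}\|\le C$ from Theorem~\ref{existence theorem} and $\|F(u_2)\|_{**}+\|N_2(w_2)\|_{**}\le C\epsilon^2$ this piece is $O(\epsilon^3)\|(f_1,g_1)-(f_2,g_2)\|_{2,1,q}$. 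Second, re-running the computation behind the error estimate \eqref{error} in differenced form gives $\|F(u_1)-F(u_2)\|_{**}\le C\epsilon^2\|(f_1,g_1)-(f_2,g_2)\|_{2,1,q}$. Third, since $N_j(w)$ is polynomial in $(\psi_j,A_j)$ and vanishes quadratically in $w$, and $\|w_2\|_*\le C\epsilon^2$, we get $\|N_1(w_2)-N_2(w_2)\|_{**}\le C\epsilon\|w_2\|_*\|(f_1,g_1)-(f_2,g_2)\|_{2,1,q}=O(\epsilon^3)\|(f_1,g_1)-(f_2,g_2)\|_{2,1,q}$. Fourth, $T_1,T_2$, the Gram coefficients $c_j^{*}=\int <T_j,T_j>$ and thus $\pi_j^{\perp}$ depend on $(f_j,g_j)$ only through $u_j$, so $\|\pi_1^{\perp}-\pi_2^{\perp}\|\le C\epsilon\|(f_1,g_1)-(f_2,g_2)\|_{2,1,q}$ as operators on the relevant weighted spaces, and applying this to $F(u_2)$ and $N_2(w_2)$ again contributes $O(\epsilon^3)\|(f_1,g_1)-(f_2,g_2)\|_{2,1,q}$.

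Adding these four contributions yields $\|S_1(w_2)-S_2(w_2)\|_*\le C\epsilon^2\|(f_1,g_1)-(f_2,g_2)\|_{2,1,q}$, hence
\begin{equation*}
\|w_1-w_2\|_*\le\tfrac12\|w_1-w_2\|_*+C\epsilon^2\|(f_1,g_1)-(f_2,g_2)\|_{2,1,q},
\end{equation*}
and absorbing the first term on the right proves the theorem. I expect the only genuinely delicate step to be the differenced error estimate: one must check that every term that was shown to be $O(\epsilon^2\rho^{-2}e^{-\delta r})$, together with the terms of size $O(\epsilon^2\rho^{-2}\tilde r^{-1})$ and $O(\epsilon^2\rho^{-2})$ arising near $\Gamma_\epsilon$ from $\Delta(We^{i\varphi})$ and $d^*dA-Im(\nabla_A\psi\cdot\bar\psi)$, still carries the full $\epsilon^2$ weight after subtraction, in particular the pieces coming from the second-derivative terms $\nabla^2(f,g)$; once this is in hand, the remaining items are the routine bookkeeping of a Lyapunov--Schmidt contraction.
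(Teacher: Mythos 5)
Your proposal follows essentially the same route as the paper: the same splitting $w_1-w_2=(S_1(w_1)-S_1(w_2))+(S_1(w_2)-S_2(w_2))$, the contraction bound for the first piece, and the key differenced error estimate $\|F(u_1)-F(u_2)\|_{**}\le C\epsilon^2\|(f_1,g_1)-(f_2,g_2)\|_{2,1,q}$ for the second. You are in fact more careful than the paper, which simply asserts that only $F(u_1)-F(u_2)$ needs estimating, whereas you explicitly check that the differences coming from $\mathbb L_j^{-1}$, $N_j$ and $\pi_j^{\perp}$ are of higher order $O(\epsilon^3)$.
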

	\begin{proof}
		Denote $u_j$ is the approximate solution with respect to $(f_j, g_j)$ and rewrite \eqref{nonlinear equ}:
		\begin{equation*}
			\pi^{\perp}[F(u_j) + \mathbb L_jw_j + N_j(w_j)] = 0.
		\end{equation*}
		
		 So we denote $S_j(w) : \mathbb L^{-1}_j[-\pi^{\perp}F(u_j) - \pi^{\perp}N_j(w_j)]$. Then we only need to estimate $F(u_1) - F(u_2)$.
		
		In fact, by the results in section 1, we get :
		\begin{equation*}
			\|F(u_1) - F(u_2)\|_{**}\le C\epsilon^2\|(f_1, g_1) - (f_2, g_2)\|_{2, 1, q}.
		\end{equation*}
		Thus we have:
		\begin{equation*}
			\begin{aligned}
				\|w_1 - w_2\|_{*} &= \|S_1(w_1) - S_2(w_2)\|_{*}\\
				&\le \|S_1(w_1) - S_1(w_2)\|_{*} + \|S_1(w_2) - S_2(w_2)\|_{*}\\
				&\le \frac{1}{2}\|w_1 - w_2\|_{*} + C\|F(u_1) - F(u_2)\|_{**}.\\
			\end{aligned}
		\end{equation*}
		Therefore we get:
		\begin{equation*}
			\|w_1 - w_2\|_{*} \le C\epsilon^2\|(f_1, g_1) - (f_2, g_2)\|_{2, 1, q}.
		\end{equation*}
	\end{proof}

	\section{From Jacobi operator to the true solution}
	In the section, we hope to find appropriate $f$ and $g$, which satisfies \eqref{disturbance}, such that the equation holds:
	\begin{equation*}
		F(u) + \mathbb L w + N(w) = 0.
	\end{equation*}
	This is involved with Jacobi operator, which is discussed in Arezzo-Pacard \cite{Arezzo}. Becasue of the decay of $f$ and $g$, we can get similar results for the Jacobi operator in dimension 4.

	Multiply $T_j$ and integral on $\{a, b\}$, then using the results in section 1 we get:
	\begin{equation}\label{Jacobi equ1}
		\begin{aligned}
			&\rho^{-6}\partial_s^2f + 2\rho^{-4}\cos2s\cdot\partial_s f + \rho^{-2}\cdot\partial_\theta^2f - 2\rho^{-2}\cos2s\cdot\partial_\theta g + 2\rho^{-6}\cdot f \\
			&- \rho^{-2}(\cos2s)^2\cdot f  = F_1,\\
			&\rho^{-6}\partial_s^2g + 2\rho^{-4}\cos2s\cdot\partial_s g + \rho^{-2}\cdot\partial_\theta^2g + 2\rho^{-2}\cos2s\cdot\partial_\theta f +2\rho^{-6}\cdot g \\
			&- \rho^{-2}(\cos2s)^2\cdot g = F_2,
		\end{aligned}
	\end{equation}
	where
	\begin{equation*}
		\|F_j\|_{0, 2, q} \le C.
	\end{equation*}
	
	If we denote $N = ie^{-is}f\cdot\Theta + ie^{is}g\cdot\Theta^{\perp}$, then we can rewrite \eqref{Jacobi equ1}:
	\begin{equation*}
		L_HN = ie^{-is}F_1\cdot\Theta + ie^{is}F_2\cdot\Theta^{\perp}.
	\end{equation*}
	
	Next we will recall the results of Arezzo-Pacard in \cite{Arezzo}. Define the variable $t$ by:
	\begin{equation*}
		e^{-2t} = \frac{\sin2s}{1 - \cos2s} = \frac{\cos s}{\sin s}.
	\end{equation*}
	Then we have:
	\begin{equation*}
		\begin{aligned}
			dt = \frac{1}{\sin2s}ds,\text{ } \sin2s = (\cosh2t)^{-1},\\
			\cos2s = -\tanh2t.
		\end{aligned}
	\end{equation*}
	We define the conjugate operator:
	\begin{equation*}
		\mathcal L_H : = (\sin2s)^{-1}L_H,
	\end{equation*}
	then we have:
	\begin{equation*}
		\begin{aligned}
			\mathcal L_HN = \partial_t^2(f, g) + \partial_\theta^2(f, g) + 2\tanh2t\cdot\partial_\theta(g, -f) + [\frac{3}{(\cosh2t)^2} - 1](f, g).
		\end{aligned}
	\end{equation*}
	
	Let $t$ tend to $-\infty$, we get that $\mathcal L_H$ is asymptotic to the following differential operator:
	\begin{equation*}
		\mathcal L_0N := \partial_t^2(f, g) + \partial_\theta^2(f, g) - 2\partial_\theta(g, -f) - (f, g).
	\end{equation*}
	
	On $S^1$, the spectrum of $\Delta^0$ and $\Delta^1$ are given by:
	\begin{equation*}
		\begin{aligned}
			\sigma(\Delta^0) = \{k^2: k\ge 0\},\\
			\sigma(\Delta^1) = \{k^2: k \ge  0\}.
		\end{aligned}
	\end{equation*}
	
	We remark that we consider $g\cdot\Theta^{\perp}$ as the tangent vector field on $S^1$. Then the corresponding 1-form is $g$d$\theta$.
	
	We define
	\begin{equation*}
		\mathcal V^0: = \{(f, g): df = 0, dg = 0\}.
	\end{equation*}
	For all $k\ge 1$, we define:
	\begin{equation*}
		\mathcal V^k := \{(f, g):\Delta(f, g) = -k^2(f, g)\}.
	\end{equation*}
	In fact, we know the eigenfunctions of $\Delta^0$ corresponding to eigenvalue $k^2$ are $\cos k\theta$ and $\sin k\theta$. So we can always assume that:
	\begin{equation*}
		\begin{aligned}
			f &= f_0 + \sum\limits_{k \ge 1}f_{k,1}\cos k\theta + \sum\limits_{k \ge 1}f_{k,2}\sin k\theta,\\
			g &= g_0 + \sum\limits_{k \ge 1}g_{k,1}\cos k\theta + \sum\limits_{k \ge 1}g_{k,2}\sin k\theta. \\
		\end{aligned}
	\end{equation*}
	This time the equation $\mathcal L_H(f, g) = 0$ yields the couple system:

	\begin{numcases}{}
		\ddot{f}_{k,1} - k^2f_{k,1} + 2k\tanh 2t\cdot g_{k,2} + [\frac{3}{(\cosh2t)^2} - 1]f_{k, 1} = 0,\\
			\ddot g_{k, 2} - k^2 g_{k, 2} + 2k\tanh 2t\cdot f_{k, 1} + [\frac{3}{(\cosh2t)^2} - 1]g_{k, 2} = 0,
	\end{numcases}

	when $k \neq 0$. If we denote $h_{k, 1} = f_{k, 1} + g_{k, 2}$, $h_{k, 2} = f_{k, 1} - g_{k, 2}$, then we have:
	\begin{numcases}{}
		 \label{ode1}\ddot{h}_{k,1} - k^2h_{k,1} + 2k\tanh 2t\cdot h_{k,1} + [\frac{3}{(\cosh2t)^2} - 1]h_{k,1} = 0,\\
		\label{ode2} \ddot{h}_{k,2} - k^2f_{k,2} - 2k\tanh 2t\cdot h_{k,2} + [\frac{3}{(\cosh2t)^2} - 1]h_{k,2} = 0.
	\end{numcases}
	 And for $k = 0$ ,we get:
	\begin{equation*}
		\begin{cases}
			\ddot f_0 - [1 - \frac{3}{(\cosh2t)^2}]f_0 = 0,\\
			\ddot g_0 - [1 - \frac{3}{(\cosh2t)^2}]g_0 = 0.
		\end{cases}
	\end{equation*}
	
	We find that when $k = 0$, the asymptotic behavior of $f_0$, $g_0$ at both $\pm\infty $ is governed by the exponents
	\begin{equation*}
		\mu_0^{\pm} = \pm 1.
	\end{equation*}
	And when $k > 0$, the asymptotic behavior of $f_{k, j}$, $g_{k, j}$ at both $\pm\infty $ is governed by the following sets of indicial roots:
	\begin{equation*}
		\mu^{\pm}_k = \pm (k + 1), \nu^{\pm}_k = \pm(k - 1).
	\end{equation*}
	
	In \cite{Arezzo}, Arezzo-Pacard introduced all the geometric Jacobi fields as well as their explicit expression.
	
	\begin{itemize}
		\item 1. Jacobi fields corresponding to translation:
		\begin{equation*}
			f = (a\cdot\Theta)\sin(\alpha + s), g = (a\cdot\Theta^{\perp}) \sin(\alpha - s),
		\end{equation*}
		where $\alpha \in \mathbb R$ and $a \in \mathbb R^2$
		\item 2. Jacobi fields corresponding to dilations:
		\begin{equation*}
			f = (\sin2s)^{\frac{1}{2}},  g = 0.
		\end{equation*}
		\item 3. Jacobi fields corresponding to the action of $SU(2)$ :
		\begin{equation*}
			\begin{aligned}
				f = (\sin2s)^{-\frac{1}{2}}\cos2s (A\Theta\cdot\Theta),\\
				g = (\sin2s)^{-\frac{1}{2}}(A\Theta\cdot\Theta^{\perp}),
			\end{aligned}
		\end{equation*}
		where $A$ is a $2\times2$ symmetric matrix.
		\item 4. Jacobi fields corresponding to the action of $O(4)/SU(2)$
		\begin{equation*}
			f = 0, g = (\sin2s)^{\frac{1}{2}}(A\Theta\cdot\Theta^{\perp}),
		\end{equation*}
		or
		\begin{equation*}
			f = 0, g = (\sin2s)^{-\frac{1}{2}}\cos2s\cdot(A\Theta\cdot\Theta^{\perp}),
		\end{equation*}
		where $A$ is a $2 \times 2$ skew-symmetric matrix.
	\end{itemize}	
	
	We observed that the Jacobi field 2 and 4 are in $\mathcal V^0$, and 1 is in $\mathcal V^1$. As for the Jacobi field 3, its decay is bad. Besides, similarly in Proposition 11.2 and 11.3 in \cite{Arezzo}, we get:
	\begin{proposition}
		There exists $\hat t_0 > 0$ such that if $(f, g)$ is a solution of $\mathcal L(f, g) = 0$ in $(t_1, t_2)\times\mathbb S^1$ , for some $t_1 < -\hat t_0$ and $t_2 > \hat t_0$ (with $U = 0$ on the boundary if either $t_1 > -\infty$ or $t_2 < \infty$, satisfying
		\begin{equation*}
			\|(f, g)\|_{0, \delta, q} < 1 ,
		\end{equation*}
		for some $ 0 < \delta < \frac{1}{4}$ and with the property that for any $t \in (t_1, t_2)$, ($f, g)(t, \cdot)$ is orthogonal to $\mathcal V^0$ in the $L^2$ sense on $\mathbb S^1$, then $(f, g)= 0$.
	\end{proposition}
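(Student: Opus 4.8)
The plan is to adapt, mode by mode, the argument Arezzo--Pacard use for the injectivity of the Jacobi operator in \cite{Arezzo} (their Propositions 11.2 and 11.3): decompose $(f,g)$ in a Fourier series in $\theta$, use the orthogonality to $\mathcal V^0$ to kill the zeroth mode, and then analyze the scalar ODEs \eqref{ode1}--\eqref{ode2} in $t$ for the higher modes via their indicial roots together with the decay imposed by the weighted norm.

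First I would fix $\hat t_0>0$ so large that for $|t|>\hat t_0$ the coefficients $2\tanh 2t$ and $3/(\cosh 2t)^2$ are exponentially close to their limits $\pm 2$ and $0$, which is what makes the asymptotic (indicial) analysis of \eqref{ode1}--\eqref{ode2} rigorous near the ends of $(t_1,t_2)$. Writing
\[
f=f_0+\sum_{k\ge 1}(f_{k,1}\cos k\theta+f_{k,2}\sin k\theta),\qquad
g=g_0+\sum_{k\ge 1}(g_{k,1}\cos k\theta+g_{k,2}\sin k\theta),
\]
the operator $\mathcal L_H$ preserves the zeroth mode and each of the pairs $(f_{k,1},g_{k,2})$, $(f_{k,2},g_{k,1})$; the hypothesis that $(f,g)(t,\cdot)$ is $L^2(\mathbb S^1)$-orthogonal to $\mathcal V^0$ for every $t$ says exactly that $f_0\equiv g_0\equiv 0$, so only the modes $k\ge 1$ remain. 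For each such mode the substitution $h_{k,1}=f_{k,1}+g_{k,2}$, $h_{k,2}=f_{k,1}-g_{k,2}$ (and the analogue for the other pair) produces the decoupled equations \eqref{ode1}--\eqref{ode2}, and since $\rho\sim e^{|t|}$ the bound $\|(f,g)\|_{0,\delta,q}<1$ forces $h_{k,j}=O(e^{-\delta|t|})$ at every infinite endpoint and $h_{k,j}=0$ at every finite endpoint.

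I would then dispatch the modes according to $k$. For $k\ge 2$, rewriting \eqref{ode1}--\eqref{ode2} as $\ddot h_{k,j}=q_{k,j}(t)\,h_{k,j}$ one finds, after completing the square in $\tanh 2t$, that $q_{k,j}(t)\ge \tfrac23(k^2-3)>0$; multiplying by $h_{k,j}$ and integrating over $(t_1,t_2)$ — the boundary terms vanish by the decay at infinite ends and by $U=0$ at finite ones — gives $\int \dot h_{k,j}^2=-\int q_{k,j}h_{k,j}^2\le 0$, hence $h_{k,j}\equiv 0$. For $k=1$ the situation is different: at one of the two ends the limiting equation for $h_{1,1}$ (respectively $h_{1,2}$) is $\ddot h=0$, i.e. the indicial root $0$ has multiplicity two, so every solution behaves there like $a+bt+O(e^{-\gamma|t|})$; the requirement of genuine exponential decay ($\delta>0$) forces $a=b=0$ and hence $h_{1,1}\equiv h_{1,2}\equiv 0$. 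Undoing the substitutions yields $f_{k,j}=g_{k,j}=0$ for all $k\ge1$, and combined with $f_0=g_0=0$ this gives $(f,g)\equiv0$.

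The delicate point, and the reason the hypothesis requires $\delta>0$ rather than mere boundedness, is precisely the $k=1$ modes: there the energy quadratic form is indefinite, so one cannot conclude by integration by parts, and one must instead use that the double indicial root $0$ occurring at one end is incompatible with any strictly positive weight — this is exactly what excludes the bounded translation Jacobi fields of $\mathcal V^1$, which lie in the kernel of $\mathcal L_H$ but violate the decay. The remaining ingredients — making the asymptotic expansions rigorous for $|t|>\hat t_0$ and handling the finite-endpoint boundary terms — are routine modifications of \cite{Arezzo}.
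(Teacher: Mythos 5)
Your overall strategy is the right one and matches what the paper intends (the paper itself gives no argument and simply defers to Proposition 11.3 of Arezzo--Pacard, remarking that $\hat t_0$ can be dropped because the system decouples into \eqref{ode1}--\eqref{ode2}): the orthogonality to $\mathcal V^0$ kills exactly the zeroth Fourier mode, the operator couples only $(f_{k,1},g_{k,2})$ and $(f_{k,2},g_{k,1})$, and the substitution $h_{k,1}=f_{k,1}+g_{k,2}$, $h_{k,2}=f_{k,1}-g_{k,2}$ decouples these. Your key computation is correct: with $\tau=\tanh 2t$ one has $q_{k,j}=k^2\mp 2k\tau-2+3\tau^2=\tfrac23(k^2-3)+3\left(\tau\mp\tfrac{k}{3}\right)^2\ge\tfrac23(k^2-3)>0$ for $k\ge 2$, so the integration by parts closes those modes (the pointwise decay of $h_{k,j}$ and $\dot h_{k,j}$ needed for the boundary terms at infinite ends follows from the weighted $L^q$ bound plus interior ODE estimates, which you should at least mention).

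The gap is in the $k=1$ modes when an endpoint is finite. Your $k=1$ argument is purely asymptotic: it exploits that the end carrying the double indicial root $0$ lies at infinity, where the weight forces $a=b=0$ in the $a+bt$ expansion. But the proposition explicitly allows $t_1>-\infty$ or $t_2<\infty$ with a Dirichlet condition, and for $h_{1,1}$ the double root sits at $+\infty$: if $t_2<\infty$ there is no decay requirement at that end, only $h_{1,1}(t_2)=0$, and your mechanism does not apply. This is not the kind of "finite-endpoint boundary term" you dismiss as routine, since your $k=1$ step contains no integration by parts and the potential $q_{1,1}=(3\tanh 2t+1)(\tanh 2t-1)$ changes sign, so no direct energy or maximum-principle argument is available either. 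The missing ingredient is the explicit positive solutions supplied by the translation Jacobi fields: $h_{1,1}=\sin s$ and $h_{1,2}=\cos s$ solve the $k=1$ equations and are strictly positive for every finite $t$. With them the case closes in all configurations: the solution decaying at the infinite end with roots $\pm 2$ is exactly a multiple of $\sin s$ (resp.\ $\cos s$), which cannot vanish at a finite endpoint; and if both endpoints are finite, disconjugacy (equivalently, the ground-state substitution $h=w\sin s$, which turns the quadratic form into $\int \sin^2 s\,\dot w^2\ge 0$) rules out a nontrivial Dirichlet solution. Once this is added, your proof is complete and also justifies the paper's remark that no largeness condition on $\hat t_0$ is actually needed.
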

	The proof is the same as Proposition 11.3 in \cite{Arezzo}.
	In fact we can drop the condition for $\hat t_0$, since the equations can be reduced to \eqref{ode1} and \eqref{ode2}.
	Finally we can get the following two results:
	\begin{proposition}\label{Schauder1}
		Assume $0 < \delta < \frac{1}{4}$ is fixed. Then for all $t_0 \in \mathbb R$, there exists an operator
		\begin{equation*}
			\mathcal G_{t_0}: W^{0, q}_\delta([t_0, +\infty) \times \mathbb S^1; \mathbb R\times \mathbb R) \rightarrow W^{2, q}_\delta([t_0, +\infty) \times \mathbb S^1; \mathbb R\times \mathbb R),
		\end{equation*}
		such that for all $V := (F_1, F_2) \in W^{0, q}_\delta([t_0, +\infty) \times \mathbb S^1; \mathbb R\times \mathbb R)$, if for all $t > t_0$, $V(t, \cdot)$ is orthogonal to $\mathcal V^0$ in the $L^2$ sense on $\mathbb S^1$, then $U = \mathcal G_{t_0} V$ is the unique solution of
		\begin{equation*}
			\begin{cases}
				\mathcal L_HU = V, \text{ in }[t_0, +\infty) \times \mathbb S^1,\\
				V = 0, \text{ on } \{t_0\}\times\mathbb S^1,
			\end{cases}
		\end{equation*}
		which belongs to $W^{2, q}_\delta([t_0, +\infty) \times \mathbb S^1; \mathbb R\times \mathbb R)$. Furthermore, $\|U\|_{2, \delta, q} \le C \|V\|_{0, \delta, q}$.
	\end{proposition}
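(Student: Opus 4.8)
The plan is to prove the Proposition by separation of variables in the angular variable $\theta$, reducing $\mathcal L_H$ to a family of ordinary differential operators on the half-line $[t_0,+\infty)$, solving each of these with the Dirichlet condition at $t_0$, and reassembling; this is the half-cylinder analogue of the linear solvability results of \cite{Arezzo}.

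First I would expand $U=(f,g)$ and $V=(F_1,F_2)$ into Fourier series on $\mathbb S^1$. Since $V(t,\cdot)$ is $L^2$-orthogonal to $\mathcal V^0$ for every $t$, the zero angular mode of $V$ vanishes identically, so it suffices to produce $U$ whose zero mode also vanishes, and $\mathcal L_H$ then acts mode by mode. For each $k\ge1$ the coupled $2\times2$ system diagonalizes under $h_{k,1}=f_{k,1}+g_{k,2}$, $h_{k,2}=f_{k,1}-g_{k,2}$ (and the analogous combinations of the $\sin k\theta$ coefficients) into the decoupled scalar equations \eqref{ode1} and \eqref{ode2}, whose potentials converge exponentially fast, as $t\to+\infty$, to the constants $-(k-1)^2$ and $-(k+1)^2$ respectively.

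Next, for each such scalar ODE on $[t_0,+\infty)$ I would construct the Green's function from two homogeneous solutions: $h_-$, the one vanishing at $t_0$, and $h_+$, a solution decaying as $t\to+\infty$, whose existence and decay rate I read off from the explicit list of geometric Jacobi fields (translation, dilation, $SU(2)$, $O(4)/SU(2)$) recalled above, since these span, mode by mode, the solution space of $\mathcal L_HN=0$. Their Wronskian is a nonzero constant because, by the preceding non-degeneracy Proposition (applicable here since the equation has been reduced to \eqref{ode1}--\eqref{ode2}), a homogeneous solution that both vanishes at $t_0$ and lies in $W^{2,q}_\delta$ must be $0$. Variation of constants then yields, in each mode, a unique solution obeying $\|h\|_{2,\delta,q}\le C_k\,\|\text{forcing}\|_{0,\delta,q}$, and the $C_k$ are uniformly bounded in $k$ since $\delta\in(0,\tfrac14)$ sits strictly between the indicial exponents $0,1,2,\dots$, giving the high modes a uniform spectral gap. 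Summing the modes and invoking interior $L^q$ elliptic estimates for the uniformly elliptic operator $\mathcal L_H$ on unit cylinder slabs $[\tau,\tau+1]\times\mathbb S^1$, together with a boundary estimate near $t=t_0$, produces $U=\mathcal G_{t_0}V\in W^{2,q}_\delta$ with $\mathcal L_HU=V$, $U|_{\{t_0\}\times\mathbb S^1}=0$ and $\|U\|_{2,\delta,q}\le C\|V\|_{0,\delta,q}$; uniqueness in $W^{2,q}_\delta$ is again the non-degeneracy input, applied to the difference of two solutions.

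The hard part will be the low angular modes, $k=0$ and especially $k=1$, which carry the dilation and the translation Jacobi fields: there the limiting potentials degenerate (to $-1$ and to $0$), so the homogeneous solutions lose their clean exponential dichotomy, and one must use the explicit Jacobi fields to check that the Dirichlet condition at $t_0$ together with the weight $\delta>0$ still pins down a unique solution, and, for the gluing carried out later, that the norm of $\mathcal G_{t_0}$ does not degenerate as $t_0\to\pm\infty$. The remaining ingredients are the standard, if somewhat lengthy, one-dimensional weighted $L^q$ theory already developed in \cite{Arezzo}.
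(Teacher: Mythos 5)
Your proposal is correct and follows essentially the same route as the paper, which gives no independent argument here but simply invokes Propositions 11.2--11.3 of Arezzo--Pacard together with the Fourier decomposition into the decoupled ODEs \eqref{ode1}--\eqref{ode2} and the indicial-root analysis; your mode-by-mode Green's function construction is exactly that argument written out. The only small slip is listing $k=0$ among the delicate modes: the hypothesis that $V(t,\cdot)\perp\mathcal V^0$ removes the zero mode entirely (that case is deferred to the companion Proposition with $\delta>1$), so the genuinely delicate mode is $k=1$ with its indicial root $0$, which your choice $\delta\in(0,\tfrac14)$ handles as you describe.
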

	
	\begin{proposition}\label{Schauder2}
		Assume that $\delta > 1$ is fixed. There exists some const $c>0$ and an operator
		\begin{equation*}
			\mathcal G_{t_0}: W^{0, q}_\delta([t_0, +\infty) \times \mathbb S^1; \mathbb R\times \mathbb R) \rightarrow W^{2, q}_\delta([t_0, +\infty) \times \mathbb S^1; \mathbb R\times \mathbb R),
		\end{equation*}
		such that for all $V : = (F_1, F_2) \in W^{0, q}_\delta([t_0, +\infty) \times \mathbb S^1; \mathbb R\times \mathbb R)$, if for all $t > t_0$, $V$ is constant on $\mathbb S^1$, then $U = \mathcal G_{t_0} V$ is the unique solution of
		\begin{equation*}
			\begin{cases}
				\mathcal L_HU = V, \text{ in }[t_0, +\infty) \times \mathbb S^1,\\
				V = 0, \text{ on } \{t_0\}\times\mathbb S^1,
			\end{cases}
		\end{equation*}
		which belongs to $W^{2, q}_\delta([t_0, +\infty) \times \mathbb S^1; \mathbb R\times \mathbb R)$. Furthermore, $\|U\|_{2, \delta, q} \le C \|V\|_{0, \delta, q}$.
	\end{proposition}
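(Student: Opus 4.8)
The plan is to use that the data $V$ is constant on $\mathbb{S}^1$ to reduce the inversion of $\mathcal L_H$ to a pair of uncoupled scalar ODEs in the variable $t$, for which one can write down an essentially explicit Green's operator with the required weighted bounds.

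First I would observe that in the displayed formula for $\mathcal L_H N$ every term carrying a $\theta$-derivative (namely $\partial_\theta^2(f,g)$ and $2\tanh 2t\,\partial_\theta(g,-f)$) vanishes when $(f,g)$ is independent of $\theta$. Hence, writing the unknown as $U=(f_0(t),g_0(t))$, the equation $\mathcal L_HU=V$ decouples into two copies of the scalar problem
\[
\ddot u-q(t)\,u=F,\qquad q(t):=1-\frac{3}{(\cosh 2t)^2},
\]
on $[t_0,+\infty)$, with $u(t_0)=0$. So it suffices to build, uniformly in $t_0$, a bounded solution operator for this ODE mapping $W^{0,q}_\delta$ into $W^{2,q}_\delta$ and to prove uniqueness of the solution in $W^{2,q}_\delta$.

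Next I would analyze the scalar ODE. Since $q(t)\to 1$ as $t\to+\infty$, the indicial roots at $+\infty$ are $\pm1$, and $\delta>1$ is chosen precisely to avoid them. The dilation Jacobi field of $\Gamma$ supplies the explicit decaying homogeneous solution $u_+(t)=(\cosh 2t)^{-1/2}$ (it lies in $\ker L_H=\ker\mathcal L_H$), with $u_+(t)\sim\sqrt 2\,e^{-t}$; reduction of order gives $u_-(t)=u_+(t)\int^t u_+(\sigma)^{-2}\,d\sigma\sim e^{t}$, and $W:=u_+\dot u_--\dot u_+u_-$ is a nonzero constant. I would then define $\mathcal G_{t_0}$ by the variation-of-parameters formula with both tails anchored at $+\infty$,
\[
(\mathcal G_{t_0}F)(t)=\frac{u_+(t)}{W}\int_t^{\infty}u_-(\sigma)F(\sigma)\,d\sigma-\frac{u_-(t)}{W}\int_t^{\infty}u_+(\sigma)F(\sigma)\,d\sigma+\kappa\,u_+(t),
\]
with $\kappa$ fixed by the normalization at $t_0$. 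Anchoring both integrals at $+\infty$ is what produces the gain: because $\delta>1$ one has $\int_t^{\infty}e^{(1-\delta)\sigma}\,d\sigma\lesssim e^{(1-\delta)t}$, so a termwise estimate gives $|(\mathcal G_{t_0}F)(t)|\lesssim e^{-\delta t}\,\|F\|_{0,\delta,q}$ near infinity, whereas anchoring at $t_0$ would only yield the slower rate $e^{-t}$. The pointwise decay is upgraded to the full $W^{2,q}_\delta$ bound by reading $\ddot u=q\,u+F$ off the equation and applying interior $L^q$ estimates on unit $t$-windows (and, back on the cylinder, the standard elliptic $W^{2,q}$ estimate), giving $\|U\|_{2,\delta,q}\le C\|V\|_{0,\delta,q}$. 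For uniqueness, a homogeneous solution $U\in W^{2,q}_\delta$ carries no $k\ge1$ Fourier modes (the data has none, the construction stays in the $k=0$ sector, and a $k\ge1$ homogeneous mode vanishing at $t_0$ and decaying at $+\infty$ is impossible by the corresponding indicial count), so each component is a homogeneous solution of the scalar ODE lying in $W^{2,q}_\delta$; but every nonzero such solution decays no faster than $e^{-t}$ while $\delta>1$, hence $U\equiv0$.

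I expect the main obstacle to be the bookkeeping of exponential rates through the variation-of-parameters integrals — selecting the representation that genuinely realizes the $e^{-\delta t}$ decay while remaining compatible with the Dirichlet normalization at $t_0$ — together with verifying that the constant $C$ in the weighted estimate is independent of $t_0$; for the latter one uses that $q(t)>0$, so the ODE is coercive, for $|t|$ large, and that the explicit kernel above controls the bounded remaining range of $t$. This is exactly the content of Propositions 11.2--11.3 in Arezzo--Pacard \cite{Arezzo}, whose argument can be followed after the reduction to the $k=0$ sector described above.
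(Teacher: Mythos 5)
Your reduction to the $k=0$ sector is correct, and so is the explicit homogeneous solution: one checks directly that $u_+(t)=(\cosh 2t)^{-1/2}$ (the dilation Jacobi field $f=(\sin2s)^{1/2}$, $g=0$, read in the $t$ variable) solves $\ddot u-[1-3(\cosh 2t)^{-2}]u=0$, and your uniqueness argument is fine. The gap is in the existence step, and it is not the ``bookkeeping'' you anticipate but a genuine obstruction. The particular solution with both tails anchored at $+\infty$ does decay like $e^{-\delta t}$, but it need not vanish at $t_0$, and the only non-growing homogeneous correction available is $\kappa u_+$, which decays only like $e^{-t}\sim\rho^{-1}$. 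For $\delta>1$ this correction is \emph{not} in $W^{2,q}_\delta$, so the function you write down leaves the target space the moment $\kappa\neq0$. Equivalently, by your own uniqueness argument the only candidate in $W^{2,q}_\delta$ is the anchored-at-infinity solution with $\kappa=0$, whose value at $t_0$,
\begin{equation*}
u_p(t_0)=\frac{1}{W}\Bigl[u_+(t_0)\int_{t_0}^{\infty}u_-(\sigma)F(\sigma)\,d\sigma-u_-(t_0)\int_{t_0}^{\infty}u_+(\sigma)F(\sigma)\,d\sigma\Bigr],
\end{equation*}
is a nontrivial linear functional of $F$. Hence for generic data the Dirichlet problem in the $\mathcal V^0$ sector has no solution decaying strictly faster than the indicial rate $e^{-t}$: the operator is injective but has a one-dimensional cokernel per scalar component, and no choice of ``representation'' of the variation-of-parameters formula can remove it.

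Note also that the paper gives no proof of this proposition; it is imported from Arezzo--Pacard, where the corresponding difficulty is handled either by taking the weight between the indicial roots or by solving only modulo a finite-dimensional deficiency space spanned by the slowly decaying ($\sim e^{-t}$) Jacobi fields in $\mathcal V^0$, the leftover finite-dimensional obstruction being absorbed later into geometric parameters. Your $\kappa\,u_+(t)$ is precisely such a deficiency-space element, so to make the argument correct you must either restate the proposition with the target space enlarged by $\mathrm{span}\{u_+\}$ (and track the resulting $O(e^{-t})$ term through Section 8), or verify the orthogonality condition $u_p(t_0)=0$ for the specific right-hand sides $(F_1',F_2')+(F_1'',F_2'')$ that actually arise. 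As written, the proposal does not prove the statement.
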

	
	With these results, we can solve the last Jacobi operator equations \eqref{Jacobi equ1}:
	\begin{theorem}\label{the solution to Jacobi operator}
		When $\epsilon$ is small enough, there exists a solution to \eqref{Jacobi equ1}, satisfying:
		\begin{equation*}
			\|(f, g)\|_{2, 1, q} \le C.
		\end{equation*}
	\end{theorem}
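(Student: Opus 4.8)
The plan is to read \eqref{Jacobi equ1} as a fixed point problem for the pair $(f,g)$ and to solve it by inverting the Jacobi operator $L_H$ (equivalently the conjugate operator $\mathcal L_H$) using the Fourier mode analysis above together with Propositions \ref{Schauder1} and \ref{Schauder2}, then closing with a contraction argument.

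The first point to make is that the right hand side $(F_1,F_2)$ of \eqref{Jacobi equ1} is not given data: it is the $L^2$-projection onto $T_1,T_2$ of $F(u)+\mathbb L w+N(w)$, where $u=u(f,g)$ is the approximate solution of Section 5 and $w=w(f,g)$ is the correction furnished by Theorem \ref{thm of existence of the nonlinear problem}. By the error bound \eqref{error}, the estimate $\|w\|_*\le C\epsilon^2$, the Lipschitz dependence of $w$ on $(f,g)$, and the reduction computations, the assignment $(f,g)\mapsto(F_1,F_2)$ maps the ball $\{\|(f,g)\|_{2,1,q}\le C\}$ into $\{\|(F_1,F_2)\|_{0,2,q}\le C'\}$ and is Lipschitz there with small constant once $\epsilon$ is small (this is the content of ``$M_1,M_2=O(\epsilon)$, of higher order in $f,g$'' recorded after Theorem \ref{Jacobi}). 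Hence it suffices to produce a bounded linear right inverse $\mathcal G$ of $L_H$ sending data with finite $\|\cdot\|_{0,2,q}$ into $\{\|(f,g)\|_{2,1,q}\le C\}$ and then solve $(f,g)=\mathcal G(F_1,F_2)(f,g)$ by the Banach fixed point theorem.

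To construct $\mathcal G$ I would pass to the variable $t$, work with $\mathcal L_H$, and split the data into $\mathbb S^1$-Fourier modes, writing $(F_1,F_2)=(F_1,F_2)^{\parallel}+(F_1,F_2)^{\perp}$ with $\parallel$ the $L^2$-projection onto $\mathcal V^0$ on each slice and $\perp$ its complement. The factor-swap $(\rho_1e^{i\theta_1},\rho_2e^{i\theta_2})\mapsto(\rho_2e^{i\theta_1},\rho_1e^{i\theta_2})$, which corresponds to $t\mapsto -t$, leaves $\Gamma$ invariant and is inherited by the whole construction and by $(F_1,F_2)$ through the symmetries of $r$ and $\phi$ recorded in Section 3; together with the reflection/rotation symmetry of the configuration it reduces the problem on $\Gamma_\epsilon$ to one on the half-line $[t_0,+\infty)\times\mathbb S^1$ with a boundary condition at $t_0$, and it kills the $k=1$ component, so $(F_1,F_2)^{\perp}$ carries only modes $k\ge 2$. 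I then apply Proposition \ref{Schauder2} with some $\delta\in(1,2)$ to $(F_1,F_2)^{\parallel}$ — legitimate since $\rho\ge 1$ on $\Gamma_\epsilon$ forces $\|\cdot\|_{0,\delta,q}\le\|\cdot\|_{0,2,q}$ — obtaining a solution that decays faster than $\rho^{-1}$, and Proposition \ref{Schauder1} with some $\delta\in(0,\tfrac14)$ to $(F_1,F_2)^{\perp}$; for the latter, the indicial roots in the surviving modes are $k-1\ge 1$ at $+\infty$ and $k+1\ge 3$ at $-\infty$, so a mode-by-mode a posteriori ODE argument upgrades the a priori $\rho^{-1/4}$ decay to $\rho^{-1}$ using that the data itself decays like $\rho^{-2}$. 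Undoing the symmetry reduction and adding the two contributions gives $\|\mathcal G(F_1,F_2)\|_{2,1,q}\le C\|(F_1,F_2)\|_{0,2,q}$, and the fixed point step of the previous paragraph then closes.

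The main obstacle is exactly the interplay of the geometric Jacobi fields with the borderline decay rate: the dilation field $(\sin 2s)^{1/2}$ lies in $\ker L_H\cap\mathcal V^0$ and decays precisely like $\rho^{-1}$, while the translation fields lie in $\ker L_H\cap\mathcal V^1$ and are only bounded, so $L_H$ is not invertible on the natural weighted spaces until these kernels are dealt with. The delicate points are therefore (i) to show that the symmetries of the construction (together with the criticality of the centered, symmetric configuration) force the translation and the bad $\mathcal V^0$ components of $(F_1,F_2)$ to vanish, leaving a problem solvable with decay strictly better than $\rho^{-1}$, and (ii) to carry out the a posteriori decay improvement so that the required $\rho^{-1}$ bound — rather than the weaker $\rho^{-1/4}$ coming directly from Proposition \ref{Schauder1} — is actually attained.
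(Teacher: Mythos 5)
Your proposal follows essentially the same route as the paper: treat \eqref{Jacobi equ1} as a fixed point problem, split the right-hand side into the geometric error (independent of $(f,g)$) plus the contribution of $\mathbb L w+N(w)$, invert $\mathcal L_H$ via Propositions \ref{Schauder1} and \ref{Schauder2}, and close with a contraction in the $\|\cdot\|_{2,1,q}$ ball. Your additional discussion of the Jacobi-field kernel, the symmetry reduction killing the $k=1$ modes, and the a posteriori upgrade from the $\rho^{-1/4}$ decay of Proposition \ref{Schauder1} to the required $\rho^{-1}$ is in fact more explicit than the paper's own argument, which simply asserts $\|J(f,g)\|_{2,1,q}\le C\|(F_1,F_2)\|_{0,1,q}$ from the two propositions.
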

	\begin{proof}
		By the previous calculation, we can rewrite$(F_1, F_2)$:
		\begin{equation*}
			(F_1, F_2) = (F_1', F_2') + (F_1'', F_2''),
		\end{equation*}
		\begin{equation*}
			\begin{aligned}
				\epsilon^3F_1' = &\int_{\tilde r} \pi\epsilon^3 \frac{(V')^2}{\tilde r}\cdot( 6(\cos2s)^2\rho^{-5}\tilde r^2 - 4\rho^{-9}\tilde r^2 )\\
			& + \int_{\tilde r} 4\pi \tilde rV(-V'' + \frac{V'}{\tilde r})(\cos2s)^2\epsilon^3\rho^{-5}- \int_{\tilde r}4\pi \tilde rV \epsilon^3\rho^{-9}(-V'' + \frac{V'}{\tilde r})\\
			&+\int_{\tilde r} (\cos2s)^2\epsilon^3\rho^{-5}\cdot 4\pi \tilde r^2VUU'- \int_{\tilde r} (\cos2s)^2\epsilon^3\rho^{-5}\cdot 2\pi \tilde r^2V^2UU',
			\end{aligned}
		\end{equation*}
		In fact $(F_1', F_2')$ comes from the proof of Theorem \ref{Jacobi} and does not depend on $(f, g)$.
		Besides, we know:
		\begin{equation*}
			F_1''  = \epsilon^{-3}\int_{r, \phi}<\mathbb Lw + N(w) , T_1>.
		\end{equation*}
		If we suppose $\|(f, g)\|_{2, 1, q} \le \epsilon^{-\frac{1}{2}}$ and define the operator $J(f, g) := \mathcal L_H^{-1}(F_1, F_2)$, then $\|(F_1'', F_2'')\|_{0, 1, q} \le  C\epsilon^{\frac{1}{2}}$ and by Proposition \ref{Schauder1} and \ref{Schauder2}, we have:
		\begin{equation*}
			\|J(f, g)\|_{2, 1, q} \le C \|(F_1, F_2)\|_{0, 1, q}\le C \le \epsilon^{-\frac{1}{2}}.
		\end{equation*}
		So $J$ maps from $B_{\epsilon^{-\frac{1}{2}}}$ to $B_{\epsilon^{-\frac{1}{2}}}$ in $W^{2, q}_\delta([t_0, +\infty) \times \mathbb S^1; \mathbb R\times \mathbb R)$.
		
		Similarly in the proof of Theorem \ref{thm of existence of the nonlinear problem}, we get that $J$ is a contraction map and has a fixed point. And we can also get:
		\begin{equation*}
			\|(f, g)\|_{2, 1, q} \le C.
		\end{equation*}
	\end{proof}
	
	In the end, we will explain that we have got the desired solution when $f, g$ are the ones in Theorem \ref{the solution to Jacobi operator}: (see \cite{Brendle})
	\begin{theorem}
		If $w$ satisfies that:
		\begin{equation*}
			F(u) + \mathbb Lw + N(w) = 0,
		\end{equation*}
		then $F(u + w) = 0$. In other words, $v := u + w$ is the solution to Ginzburg Landau equations.
	\end{theorem}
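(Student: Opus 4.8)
The plan is to use the gauge invariance of the Ginzburg--Landau functional to show that the extra term $TT^{*}(w)$ we added only for the sake of ellipticity automatically vanishes on the solution we have produced. By the very construction of $\mathbb{L}$ and $N$ in Section 6, the hypothesis $F(u)+\mathbb{L}w+N(w)=0$ is equivalent to
\begin{equation*}
	F(v)+TT^{*}(w)=0,\qquad v=u+w,
\end{equation*}
where $TT^{*}(w)=T(\xi)$ and $\xi:=T^{*}(w)=d^{*}B-Im(\bar{\eta}\,\tilde{\psi})$ is a real-valued $0$-form built from $w=(B,\eta)=v-u$. Thus it suffices to prove $\xi\equiv 0$: then $T(\xi)=0$ and the displayed equation collapses to $F(v)=0$.

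First I would record the Bianchi-type identity $T^{*}F(v)=0$, valid pointwise for any field $v=(\tilde{A},\tilde{\psi})$. This is exactly the infinitesimal form of the invariance of $E$ under $\sigma\mapsto(\tilde{A}+d\sigma,e^{i\sigma}\tilde{\psi})$, whose generator at $v$ is $\sigma\mapsto T(\sigma)=(d\sigma,i\sigma\tilde{\psi})$: differentiating $E(v)=E(\text{gauge image of }v)$ gives $\langle F(v),T(\sigma)\rangle=0$ for all $\sigma$, i.e. $T^{*}F(v)=0$. Concretely this follows from $d^{*}d^{*}=0$ together with the identity $d^{*}\,Im(\nabla_{\tilde{A}}\tilde{\psi}\cdot\overline{\tilde{\psi}})=Im\big(\overline{\Delta_{\tilde{A}}\tilde{\psi}}\cdot\tilde{\psi}\big)$, which one checks by expanding both sides in coordinates. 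Applying $T^{*}$ to $F(v)+T(\xi)=0$ and using $T^{*}F(v)=0$ gives $T^{*}T(\xi)=0$; since $T^{*}T(\xi)=d^{*}d\xi+|\tilde{\psi}|^{2}\xi=-\Delta\xi+|\tilde{\psi}|^{2}\xi$, the function $\xi$ solves the linear elliptic equation
\begin{equation*}
	-\Delta\xi+|\tilde{\psi}|^{2}\xi=0\quad\text{in }\mathbb{R}^{4}.
\end{equation*}

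To conclude $\xi\equiv 0$ I would invoke the decay built into our norms. Since $\|w\|_{*}<\infty$ controls $(B,\eta)$ in a weighted $W^{2,q}$ space with weight $\rho^{2}e^{\delta r}$, the field $\xi$, which involves only first derivatives of $w$, lies in a weighted $W^{1,q}$ space; in particular $\xi,\nabla\xi\in L^{2}(\mathbb{R}^{4})$ and $\xi\to 0$ at infinity, while elliptic regularity for the equation above makes $\xi$ continuous. Multiplying the equation by $\xi$ and integrating by parts (boundary terms vanishing by the decay) yields
\begin{equation*}
	\int_{\mathbb{R}^{4}}|\nabla\xi|^{2}+|\tilde{\psi}|^{2}\xi^{2}=0,
\end{equation*}
so $\nabla\xi\equiv 0$, hence $\xi\equiv 0$ since it tends to $0$ at infinity. (Alternatively, since $-\Delta\xi=-|\tilde{\psi}|^{2}\xi$, one applies the strong maximum principle on $\{\xi>0\}$ and on $\{\xi<0\}$ together with $\xi\to 0$ to reach the same conclusion.) Therefore $TT^{*}(w)=T(\xi)=0$, so $F(v)=0$, i.e. $v=u+w$ solves \eqref{Ginzburg Landau equations}. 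The two algebraic computations $T^{*}F(v)=0$ and $T^{*}T(\xi)=-\Delta\xi+|\tilde{\psi}|^{2}\xi$ are routine; the only delicate point, which I expect to be the main (if mild) obstacle, is the rigorous justification that $\xi$ and $\nabla\xi$ decay fast enough to lie in $L^{2}(\mathbb{R}^{4})$ so that the integration by parts is legitimate, which is where the weighted norm $\|\cdot\|_{*}$, the Sobolev embeddings used earlier, and elliptic estimates for $\xi$ enter.
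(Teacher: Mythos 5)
Your proposal is correct and follows essentially the same route as the paper: set $\xi=T^{*}(w)$, use the gauge-invariance identity (equivalently, apply $d^{*}$ to the second equation and substitute the first) to derive $-\Delta\xi+|\tilde{\psi}|^{2}\xi=0$, and conclude $\xi\equiv 0$. The only difference is that you spell out the final step (decay of $\xi$ and the integration by parts justifying $\xi\equiv 0$), which the paper leaves implicit.
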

	\begin{proof}
		We only need to show that $TT^*(B, \eta) = 0$.
		For short, we define $\xi = T^*(B, \eta)$ and $v = (\tilde A, \tilde \psi) + u + w$. Recall that:
		\begin{equation*}
			\begin{aligned}
				T\xi = (d\xi, i\xi\tilde\psi),\\
				T^*(B, \eta) = d^*B - Im(\bar\eta\tilde\psi).
			\end{aligned}
		\end{equation*}

		 Then by the definition of $F(v) = 0$, we get:
		\begin{equation*}
			\begin{aligned}
				-\Delta_{\tilde A}\tilde\psi + \frac{\lambda}{2}(|\tilde\psi|^2-1)\tilde\psi + i\xi\tilde\psi = 0,\\
		d^*d\tilde A - Im(\nabla_{\tilde A}\tilde\psi\cdot\overline{\tilde\psi})  + d\xi = 0.
			\end{aligned}
		\end{equation*}
		Therefore we have:
		\begin{equation*}
			\begin{aligned}
				-\Delta\xi &= d^*d\xi = d^*Im(\nabla_{\tilde A}\tilde \psi\cdot\overline{\tilde\psi})\\
				&= Im(\nabla_{\tilde A}^*\nabla_{\tilde A}\tilde \psi\cdot\overline{\tilde\psi})\\
				&=Im(-i\xi\tilde \psi\cdot\overline{\tilde\psi}) = -\xi|\tilde\psi|^2.
			\end{aligned}
		\end{equation*}
		So we conclude that $\xi = 0$.
	\end{proof}
		
	\begin{appendices}

		\section{The proof in section 3}

\subsection{The proof of Lemma \ref{lemma2}}

\begin{proof}
	
$d^2 = \inf\limits_\rho \frac12\rho^2 + \frac12\rho^{-2} + 2\rho_1^2 - \sqrt2\rho_1\sqrt{\rho^2 + \rho^{-2} + 2\cos\alpha} $.

Suppose $t = \sqrt{\rho^2 + \rho^{-2} + 2\cos\alpha}$, then we only need to consider the function $h(t) = \frac12t^2 - \sqrt2\rho_1t + 2\rho_1^2 - \cos\alpha$, $t \geqslant \sqrt{2 + 2\cos\alpha}$.

So when $\rho_1 > \sqrt{1 + \cos\alpha}$, we have
\begin{equation*}
	\inf_t h(t) = h(\sqrt2\rho_1)
	\Rightarrow 2\rho_1^2 = \rho^2 + \rho^{-2} + 2\cos\alpha.
\end{equation*}
At this case, there are two closest points $\rho', \rho''$, satisfying  $\rho'\rho'' = 1$.

When $\rho_1 \leqslant \sqrt{1 + \cos\alpha}$, we have $\inf\limits_t h(t) = h(\sqrt{2 + 2\cos\alpha})$, then $\rho = 1$. So there is only one closest point. This completes the proof.
\end{proof}

\subsection{The proof of Lemma \ref{lemma3}}

\begin{proof}
If $\rho_1 \neq \rho_2$, then $s \neq \frac\pi4$ and the conclusion naturally holds by Lemma \ref{lemma1} . Now
we have $a^2 + b^2 < \frac1{\epsilon^2\sin2s}$.

If $\rho_1 = \rho_2$ and there is only one closest point, then by symmetry we know $s = \frac{\pi}4$. As a result,
\begin{equation*}
	\begin{aligned}
	    &y = \left(\frac{\Theta}{\sqrt2\epsilon} + \frac{a\Theta}{\sqrt{2}} - \frac{b\Theta^\perp}{\sqrt2}, \frac{\Theta}{\sqrt2\epsilon} + \frac{a\Theta}{\sqrt2} + \frac{b\Theta^\perp}{\sqrt2}\right),\\
		&\rho_1^2 = \frac{(a + \epsilon^{-1})^2 + b^2}2, \text{ } e^{i(\theta_1 - \theta_2)} = \frac{\epsilon^{-1} + a -bi}{\epsilon^{-1} + a +bi}.
	\end{aligned}
\end{equation*}

So by $\rho_1 < \frac{\sqrt{2}}{2\epsilon}\sqrt{2 + 2\cos(\theta_1 - \theta_2)}$ from Lemma \ref{lemma2} , we have
\begin{equation}
	\left(\left(a + \epsilon^{-1}\right)^2 + b^2\right)^2 < \frac4{\epsilon^2}\left(a + \epsilon^{-1}\right)^2
	\Longrightarrow (a + \epsilon^{-1})^2 + b^2 < 2\epsilon^{-1}\left\vert a + \epsilon^{-1}\right\vert.
\end{equation}

If $a > -\epsilon^{-1}$, then (1)$\Rightarrow a^2 + b^2 < \epsilon^{-2}$.

If $a < -\epsilon^{-1}$, then (1)$\Rightarrow (a + \epsilon^{-1})^2 + b^2 < -2\epsilon^{-1}(a + \epsilon^{-1})$

$\qquad\Rightarrow (a + 2\epsilon^{-1})^2 + b^2 < \epsilon^{-2}$.
At this case, we observe that
\begin{equation*}
	 y = \left(- \frac{\Theta}{\sqrt2\epsilon} + \frac{\left(a + \frac2\epsilon\right)\Theta}{\sqrt{2}} - \frac{b\Theta^\perp}{\sqrt2}, - \frac{\Theta}{\sqrt2\epsilon} + \frac{\left(a + \frac2\epsilon\right)\epsilon}{\sqrt2} + \frac{b\Theta^\perp}{\sqrt2}\right).
\end{equation*}
This means the two different cases about $a$ in fact are the same.

In conclusion, when $a^2 + b^2 < \frac1{\epsilon^2\sin2s}$, then $y = T(s, \theta, a, b)$ has a unique nearest point to $\Gamma_\epsilon$.
\end{proof}

\subsection{The proof of Lemma \ref{lemma4}}

\begin{proof}
If $\rho_1 = \rho_2$, then from the previous arguments in Lemma \ref{lemma2} , after rescaling, we know
\begin{equation*}
	\begin{aligned}
		dist(y, \Gamma_\epsilon) &= \epsilon^{-2} + 2\rho_1^2 - 2\rho_1\epsilon^{-1}\sqrt{1 + \cos\alpha}\\
		&= a^2 + b^2.
	\end{aligned}
\end{equation*}

If $\rho_1 > \rho_2$, then $0 < s < \frac\pi4$. From Lemma \ref{lemma1}, we know the closest point in $\Gamma_\epsilon$ to y is in $\{\rho_1 > \rho_2\}$. Next, we will show that there is only one extreme point  of distance in $\{\rho_1 > \rho_2\} \cap \Gamma_\epsilon$. For simplicity, we only need to consider $\Gamma$ rather than $\Gamma_\epsilon$. Denote $d = dist(y, \Gamma)$.
\begin{equation*}
    \begin{aligned}
	     &f(\rho, \theta) =\frac{1}{2} \rho^2 +\frac{1}{2} \rho^{-2} + \rho_1^2 + \rho_2^2 - \sqrt2\rho_1\rho\cos(\theta - \theta_1) - \sqrt2\rho_2\rho^{-1}\cos(\theta - \theta_2),\\
	     &d^2 = \inf_{\rho, \theta} f(\rho, \theta).
	\end{aligned}
\end{equation*}

For  $\forall \rho$, $\exists \theta_0$, we have
\begin{equation*}
    \begin{aligned}
	     &\frac{\partial f}{\partial\theta}(\rho, \theta_0) = \frac{\partial f}{\partial \theta}(\rho, \theta_0 + \pi) = 0,\\
	     &f(\rho, \theta_0) =\frac{1}{2} \rho^2 + \frac12\rho^{-2} + \rho_1^2 + \rho_2^2 - \sqrt2\sqrt{\rho_1^2\rho^2 + \rho_2^2\rho^{-2} + 2\rho_1\rho_2\cos(\theta_1 - \theta_2)},\\
	     &f(\rho, \theta_0 + \pi) = \frac12\rho^2 + \frac12\rho^{-2} + \rho_1^2 + \rho_2^2 + \sqrt2\sqrt{\rho_1^2\rho^2 + \rho_2^2\rho^{-2} + 2\rho_1\rho_2\cos(\theta_1 - \theta_2)}.
	\end{aligned}
\end{equation*}
So we know that $f(\rho, \theta_0 + \pi)$ is strictly increasing in $[1, +\infty)$, which means there is no extreme point.

On the other hand, by Lemma \ref{lemma1}, if $r_2 > r_1 > 1$ satisfy $\frac{\partial f}{\partial\rho}(r_1, \theta_0) = \frac{\partial f}{\partial \rho}(r_2, \theta_0) = 0$, then  $f(r_1, \theta_0) < f(r_2, \theta_0)$. So there is only one local minimum point in $\{\rho_1 > \rho_2\}$, and it is also a global minimum point. By rescaling, the conclusion also holds for $\Gamma_\epsilon$. By calculation, we know $\left(\frac{\cos s}{\epsilon\sqrt{\sin2s}}\Theta, \frac{\sin s}{\epsilon \sqrt{\sin2s}}\Theta\right)$ is the minimum point. So we have $dist(y, X_\epsilon) = \sqrt{a^2 + b^2}$. This completes the proof.
\end{proof}

		\section{The proof in section 4}
		The metric matrix of Fermi coordinates is $$(g_{ij}) =
\begin{pmatrix}
	\tilde{a}_{11} & \tilde{a}_{12} & 0 & 0\\
	\tilde{a}_{21} & \tilde{a}_{22} & -b\cos2s & a\cos2s\\
	0 & -b\cos2s & 1 & 0\\
	0 & a\cos2s & 0 & 1\\
\end{pmatrix},$$
and $$ \tilde A = (\tilde a_{ij}) =
\begin{pmatrix}
	\left(a - \frac1{\epsilon(\sin2s)^{\frac32}}\right)^2 + b^2 & \frac{-2b}{\epsilon\sqrt{\sin2s}}\\
	\frac{-2b}{\epsilon\sqrt{\sin2s}} & a^2 + b^2 + \frac1{\epsilon^2\sin2s} + \frac{2a\sqrt{\sin2s}}\epsilon\\
\end{pmatrix}.$$
	
	The inverse matrix of $(g_{ij})$ is:
 $$(g^{ij}) =
 \begin{pmatrix}
 	\frac1c + \frac{4b^2}{c^2e\epsilon^2\sin2s}&\frac{2b}{ce\epsilon\sqrt{\sin2s}}&\frac{2b^2\cos2s}{ce\epsilon\sqrt{\sin2s}}&\frac{-2ab\cos2s}{ce\epsilon\sqrt{\sin2s}}\\
 	\frac{2b}{ce\epsilon\sqrt{\sin2s}}&\frac1e&\frac{b\cos2s}e&\frac{-a\cos2s}e\\
 	\frac{2b^2\cos2s}{ce\epsilon\sqrt{\sin2s}}&\frac{b\cos2s}e&1 + \frac{b^2\cos^22s}e&\frac{-ab\cos^22s}e\\
 	\frac{-2ab\cos2s}{ce\epsilon\sqrt{\sin2s}}&\frac{-a\cos2s}e&\frac{-ab\cos^22s}e&1 + \frac{a^2\cos^22s}e\\
 \end{pmatrix}.
 $$

 By direct calculation, we have:
\begin{equation*}
	\begin{aligned}
		&\frac{\partial_s(\sqrt G)}{\sqrt G} = -4(\sin2s)^{-1}\cos2s- 6(a^2 + b^2)\epsilon^2\cdot(\sin2s)^2\cos2s + O(\epsilon^4\rho^{-6})\\
		&=-4(\sin2s)^{-1}\cos2s + o(\epsilon^{2-\sigma}\rho^{-2}),\\
		&\frac{\partial_a(\sqrt G)}{\sqrt G} = -2a\epsilon^2(\sin2s)^3 + O(\epsilon^4\rho^{-9}),\\
		&\frac{\partial_b(\sqrt G)}{\sqrt G} = -2b\epsilon^2(\sin2s)^3 + O(\epsilon^4\rho^{-9}),\\
		&\frac1c = \epsilon^2\sin^32s + 2a\epsilon^3(\sin2s)^\frac92 + (3a^2-b^2)\epsilon^4(\sin2s)^6 +o(\epsilon^{5-\sigma}\rho^{-12}),\\
		&\partial_s \left(\frac1c\right) = 6\epsilon^2\sin^22s\cos2s + 18a\epsilon^3(\sin2s)^\frac72\cos2s + O(\epsilon^4\rho^{-10})\\
		&\partial_a \left(\frac1c\right) = 2\epsilon^3(\sin2s)^\frac92 + O(\epsilon^4\rho^{-12}),\\
		&\partial_b \left(\frac1c\right) = O(\epsilon^4\rho^{-12}),\\
		&\frac1e = \epsilon^2\sin2s - 2a\epsilon^3(\sin2s)^\frac52 + 3(a^2 + b^2)\epsilon^4(\sin2s)^4 + o(\epsilon^{5-\sigma}\rho^{-8}),\\
		&\partial_s \left(\frac1e\right) = 2\epsilon^2\cos2s - 10a\epsilon^3(\sin2s)^\frac32\cos2s + O(\epsilon^4\rho^{-6}),\\
		&\partial_a \left(\frac1e\right) = -2\epsilon^3(\sin2s)^\frac52 + O(\epsilon^4\rho^{-8}),\\
		&\partial_b \left(\frac1e\right) = O(\epsilon^4\rho^{-8}).
	\end{aligned}
\end{equation*}

	Using the formula: if $\alpha = \alpha_{i_1...i_p}dx^{i_1}\wedge...\wedge dx^{i_p} $, then
	\begin{equation*}
		(d^*\alpha)_{i_1...i_{p-1}} = -g^{kl}(\frac{\partial\alpha_{ki_1...i_{p-1}}}{\partial x^l} - \Gamma^j_{kl}\alpha_{ji_1...i_{p-1}}),
	\end{equation*}
	with
	\begin{equation*}
		\Gamma^j_{kl} = \frac12g^{ji}(g_{ki,l} + g_{li,k} - g_{kl,i}),
	\end{equation*}
	
	\begin{equation*}
		\begin{aligned}
			A =A_\tau dy^\tau,
		\end{aligned}
	\end{equation*}
	with $y^1 = s, y^2 = \theta, y^3 = a, y^4 = b$
	
	So we have:
	\begin{equation*}
		\begin{aligned}
			d^*A &= -g^{kl}(\frac{\partial A_k}{\partial y^l} - \Gamma^j_{kl}A_j).\\
		\end{aligned}
	\end{equation*}

	If we define $d^*dA = (d^*dA)_\tau dy^\tau$, then
	\begin{equation*}
		(d^*dA)_\tau = -g^{kl}(\frac{\partial A_{k\tau}}{\partial y^l} - \Gamma^j_{kl}A_{j\tau}).
	\end{equation*}
	The most complicated item is $B^j := g^{kl}\Gamma^j_{kl}$.

	On the other hand, we have another definition for $d^*$:
	
	If $\alpha = \alpha _{i_1...i_r} dx^{i_1}\wedge...\wedge dx^{i_r} $, then
	\begin{equation*}
		\begin{aligned}
			*\alpha &= \eta_{i_1...i_r j_{r+1}...j_n} \alpha^{i_1...i_r}dx^{j_{r+1}} \wedge...\wedge dx^{j_n},\\
			d^*\alpha &= (-1)^{n(r+1) +1}*d*,
		\end{aligned}
	\end{equation*}	
	with
	\begin{equation*}
		\begin{aligned}
			\eta_{i_1...i_n} = \sqrt G\delta^{1...n}_{i_1...i_n},\\
			\alpha^{i_1...i_r} = g^{i_1k_1}...g^{i_rk_r}\alpha_{k_1...k_r}.
		\end{aligned}
	\end{equation*}
	
	Therefore, for $A = A_ldy^l$ we have:
	\begin{equation*}
		\begin{aligned}
			d^*A &= -\partial_jA^j - \frac{\partial_j(\sqrt G)}{\sqrt G}A^j\\
			& = (-\partial_jg^{jl} - \frac{\partial_j(\sqrt G)}{\sqrt G}\cdot g^{jl})A_l - g^{kl}\partial_kA_l.
		\end{aligned}
	\end{equation*}
	
	As a result, we get:
	\begin{equation*}
		B^j = -\partial_kg^{kj} - \frac{\partial_k(\sqrt G)}{\sqrt G}\cdot g^{jk}.
	\end{equation*}

	First, we have the following estimates for $(g^{ij})$ when it is close to the minimal surface:

	\begin{equation*}
		\begin{aligned}
			g^{11} &= \epsilon^2\rho^{-6} + 2a\epsilon^3\rho^{-9} + 3(a^2+b^2)\epsilon^4\rho^{-12} + o(\epsilon^{5-\sigma}\rho^{-12}),\\
			g^{12} &= 2b\epsilon^{3}\rho^{-7} + o(\epsilon^{5-\sigma}\rho^{-10}),\\
			g^{13}&= 2b^2\cos2s\cdot\epsilon^3\rho^{-7} +o(\epsilon^4\rho^{-9}),\\
			g^{14} &= -2ab\cos2s\cdot\epsilon^3\rho^{-7} + o(\epsilon^4\rho^{-9}),\\
			g^{22} &=\epsilon^2\rho^{-2}-2a\epsilon^3\rho^{-5} + 3(a^2 + b^2)\epsilon^4\rho^{-8} + o(\epsilon^{5-\sigma}\rho^{-8}),\\
			 g^{23} &= b\cos2s\cdot\epsilon^2\rho^{-2} -2ab\cos2s\cdot\epsilon^3\rho^{-5} + o(\epsilon^{4-\sigma}\rho^{-7}),\\
			 g^{24} &=-a\cos2s\cdot\epsilon^2\rho^{-2} +2a^2\cos2s\cdot\epsilon^3\rho^{-5} + o(\epsilon^{4-\sigma}\rho^{-7}),\\
			 g^{33} &= 1+b^2(\cos2s)^2\cdot\epsilon^2\rho^{-2} -2ab^2(\cos2s)^2\cdot\epsilon^3\rho^{-5} + o(\epsilon^{4-\sigma}\rho^{-4}),\\
			 g^{34} &=-ab(\cos2s)^2\cdot\epsilon^2\rho^{-2} +2a^2b(\cos2s)^2\cdot\epsilon^3\rho^{-5}   + o(\epsilon^{4-\sigma}\rho^{-4}),\\
			 g^{44} &=1+a^2(\cos2s)^2\cdot\epsilon^2\rho^{-2} - 2a^3(\cos2s)^2\cdot\epsilon^3\rho^{-5} +o(\epsilon^{4-\sigma}\rho^{-4}).
    	\end{aligned}
	\end{equation*}

	Besides we also have:
	\begin{equation*}
		\begin{aligned}
			B^1 &= -2\epsilon^2\rho^{-4}\cos2s - 8a\epsilon^3\rho^{-7}\cos2s + O(\epsilon^{4}\rho^{-8}),\\
			B^2 &=-4b\epsilon^3\rho^{-5}\cos2s + O(\epsilon^{4}\rho^{-7}),\\
			B^3 &= 2a\epsilon^2\rho^{-6}  + a(\cos2s)^2\epsilon^2\rho^{-2}   - (2a^2 + 4b^2)(\cos2s)^2\epsilon^3\rho^{-5} + 4b^2\epsilon^3\rho^{-9} + O(\epsilon^{4}\rho^{-7}),\\
			B^4 &= 2b\epsilon^2\rho^{-6} + b(\cos2s)^2\epsilon^2\rho^{-2}  + 2ab(\cos2s)^2\epsilon^3\rho^{-5} - 4ab\epsilon^3\rho^{-9} + O(\epsilon^{4}\rho^{-7}).\\
		\end{aligned}
	\end{equation*}

\subsection{The proof of Lemma \ref{lemma4.1}}
\begin{proof}
	Denote $w_{ij} = \frac1{\sqrt G} \partial_i\left(\sqrt G g^{ij}\partial_j\right)$ and we only consider the $O(\epsilon^3)$ items because of the exponential decay.

\begin{equation*}
	\begin{aligned}
		&w_{1,1} = \left[2\epsilon^2(\sin2s)^2\cos2s + 10a\epsilon^3(\sin2s)^\frac72\cos2s + O(\epsilon^4)\right]\partial_s +\\
		&\left[\epsilon^2(\sin2s)^3 + 2a\epsilon^3(\sin2s)^\frac92 + O(\epsilon^4)\right]\partial_s\partial_s\\
		&w_{2,1} = \left[2b\epsilon^3(\sin2s)^\frac72 + O(\epsilon^4)\right]\partial_s\partial_\theta,\\
		&w_{3,1} = O(\epsilon^4)\partial_s + \left[2b^2\epsilon^3(\sin2s)^\frac72\cos2s + O(\epsilon^4)\right]\partial_s\partial_a,\\
		&w_{4,1} = \left[-2a\epsilon^3(\sin2s)^\frac72\cos2s + O(\epsilon^4)\right]\partial_s + \left[-2ab\epsilon^3(\sin2s)^\frac72\cos2s + O(\epsilon^4)\right]\partial_s\partial_b.
	\end{aligned}
\end{equation*}

\begin{equation*}
	\begin{aligned}
		&w_{1,2} = \left[6b\epsilon^3(\sin2s)^\frac52\cos2s + O(\epsilon^4)\right]\partial_\theta + \left[2b\epsilon^3(\sin2s)^\frac72 + O(\epsilon^4)\right]\partial_s\partial_\theta,\\
		&w_{2,2} = \left[\epsilon^2\sin2s - 2a\epsilon^3(\sin2s)^\frac52 + O(\epsilon^4)\right]\partial_\theta\partial_\theta,\\
		&w_{3,2} = \left[-2b\epsilon^3(\sin2s)^\frac52\cos2s + O(\epsilon^4)\right]\partial_\theta + \\
		&\left[b\epsilon^2\sin2s\cos2s - 2ab\epsilon^3(\sin2s)^\frac52\cos2s + O(\epsilon^4)\right]\partial_a\partial_\theta,\\
		&w_{4,2} = O(\epsilon^4)\partial_\theta + \left[-a\epsilon^2\sin2s\cos2s + 2a^2\epsilon^3(\sin2s)^\frac52\cos2s + O(\epsilon^4)\right]\partial_b\partial_\theta.\\
	\end{aligned}
\end{equation*}

\begin{equation*}
	\begin{aligned}
		&w_{1,3} = \left[6b^2\epsilon^3(\sin2s)^\frac52(\cos2s)^2 - 4b^2\epsilon^3(\sin2s)^\frac92 + O(\epsilon^4)\right]\partial_a \\&+ \left[2b^2\epsilon^3(\sin2s)^\frac72\cos2s + O(\epsilon^4)\right]\partial_s\partial_a,\\
		&w_{2,3} = b\cos2s\left[\epsilon^2\sin2s - 2a\epsilon^3(\sin2s)^\frac52 + O(\epsilon^4)\right]\partial_\theta\partial_a,\\
		&w_{3,3} = \left[-2a\epsilon^2(\sin2s)^3 -2b^2\epsilon^3(\sin2s)^\frac52(\cos2s)^2 + O(\epsilon^4)\right]\partial_a + \\
		&\left[1 + b^2\epsilon^2\sin2s(\cos2s)^2 - 2ab^2\epsilon^3(\sin2s)^\frac52(\cos2s)^2 + O(\epsilon^4)\right]\partial_a\partial_a,\\
		&w_{4,3} = \left[-a\epsilon^2\sin2s(\cos2s)^2 + 2a^2\epsilon^3(\sin2s)^\frac52(\cos2s)^2 + O(\epsilon^4)\right]\partial_a +\\
		&\left[-ab\epsilon^2\sin2s(\cos2s)^2 + 2a^2b\epsilon^3(\sin2s)^\frac52(\cos2s)^2 + O(\epsilon^4)\right]\partial_a\partial_b.\\
	\end{aligned}
\end{equation*}

\begin{equation*}
\begin{aligned}
		&w_{1,4} = \left[- 6ab\epsilon^3(\sin2s)^\frac52(\cos2s)^2 + 4ab\epsilon^3(\sin2s)^\frac92 + O(\epsilon^4)\right]\partial_b \\&+ \left[-2ab\epsilon^3(\sin2s)^\frac72\cos2s + O(\epsilon^4)\right]\partial_s\partial_b,\\
		&w_{2,4} = -a\cos2s\left[\epsilon^2\sin2s - 2a\epsilon^3(\sin2s)^\frac52 + O(\epsilon^4)\right]\partial_\theta\partial_b,\\
		&w_{3,4} = \left[-b\epsilon^2\sin2s(\cos2s)^2 + 4ab\epsilon^3(\sin2s)^\frac52(\cos2s)^2 + O(\epsilon^4)\right]\partial_b +\\
		&\left[-ab\epsilon^2\sin2s(\cos2s)^2 + 2a^2b\epsilon^3(\sin2s)^\frac52(\cos2s)^2 + O(\epsilon^4)\right]\partial_a\partial_b,\\
		&w_{4,4} = \left[-2b\epsilon^2(\sin2s)^3 + O(\epsilon^4)\right]\partial_b + \\
		&\left[1 + a^2\epsilon^2\sin2s(\cos2s)^2 - 2a^3\epsilon^3(\sin2s)^\frac52(\cos2s)^2 + O(\epsilon^4)\right]\partial_b\partial_b.\\
	\end{aligned}
\end{equation*}
The we can get the conclusion.
\end{proof}

	\subsection{The proof of Lemma \ref{lemma4.2}}
	\begin{proof}
		\begin{equation*}
		\begin{aligned}
			d^*A &= -g^{kl}\frac{\partial A_k}{\partial y^l} + B^jA_j\\
			&= -g^{1l}\frac{\partial A_1}{\partial y^l} -g^{2l}\frac{\partial A_2}{\partial y^l}-g^{3l}\frac{\partial A_3}{\partial y^l} -g^{4l}\frac{\partial A_4}{\partial y^l} + B^1A_1 + B^3A_3 + B^4A_4\\
			&= \epsilon^2\rho^{-6}\cdot(\epsilon f_{ss}\cdot A_3 + \epsilon g_{ss}\cdot A_4) + \epsilon^2\rho^{-2}\cdot(\epsilon f_{\theta\theta} A_3 + \epsilon g_{\theta\theta}A_4) \\
			&+2b\cos2s\cdot\epsilon^3\rho^{-2}f_\theta\partial_3A_3 + b\cos2s\cdot\epsilon^3\rho^{-2}g_\theta(\partial_4A_3 +\partial_3A_4) \\
			&-a\cos2s\cdot\epsilon^3\rho^{-2}f_\theta(\partial_3A_4 +\partial_4A_3) - 2a\cos2s\cdot\epsilon^3\rho^{-2}g_\theta\partial_4A_4\\
			&-(-ab(\cos2s)^2\epsilon^2\rho^{-2} + 2a^2b(\cos2s)^2\epsilon^3\rho^{-5})(\partial_4A_3+\partial
			_3A_4)\\
			& -(1+b^2(\cos2s)^2\epsilon^2\rho^{-2} - 2ab^2(\cos2s)^2\epsilon^3\rho^{-5})\partial_3A_3\\
			& - (1+a^2(\cos2s)^2\epsilon^2\rho^{-2}-2a^3(\cos2s)^2\epsilon^3\rho^{-5})\partial_4A_4\\
			& + 2\epsilon^2\rho^{-4}\cos2s\cdot(\epsilon f_sA_3 + \epsilon g_sA_4)\\
			&+(2a\epsilon^2\rho^{-6}  + a(\cos2s)^2\epsilon^2\rho^{-2}   - (2a^2 + 4b^2)(\cos2s)^2\epsilon^3\rho^{-5} + 4b^2\epsilon^3\rho^{-9})A_3\\
			& +(2b\epsilon^2\rho^{-6} + b(\cos2s)^2\epsilon^2\rho^{-2}  + 2ab(\cos2s)^2\epsilon^3\rho^{-5} - 4ab\epsilon^3\rho^{-9})A_4 + o(\epsilon^{4}).\\
		\end{aligned}
	\end{equation*}
	\end{proof}

	\subsection{The proof of Lemma \ref{lemma4.3}}
	\begin{proof}
		\begin{equation*}
		\begin{aligned}
			<A, d\psi> &=  -2b\cos2s\cdot\epsilon^3\rho^{-2}f_\theta A_3\partial_3\psi - b\cos2s\cdot\epsilon^3\rho^{-2}g_\theta (A_3\partial_4\psi + A_4\partial_3\psi) \\
			&+a\cos2s\cdot\epsilon^{3}\rho^{-2}f_\theta (A_4\partial_3\psi + A_3\partial_4\psi) + 2a\cos2s\cdot\epsilon^{3}\rho^{-2}g_\theta A_4\partial_4\psi\\
			&+(1+b^2(\cos2s)^2\cdot\epsilon^2\rho^{-2} -2ab^2(\cos2s)^2\cdot\epsilon^3\rho^{-5})A_3\cdot\partial_3\psi \\
			&+ (-ab(\cos2s)^2\cdot\epsilon^2\rho^{-2} +2a^2b(\cos2s)^2\cdot\epsilon^3\rho^{-5})(A_3\partial_4\psi + A_4\partial_3\psi) +\\
			& +(1+a^2(\cos2s)^2\cdot\epsilon^2\rho^{-2} - 2a^3(\cos2s)^2\cdot\epsilon^3\rho^{-5})A_4\partial_4\psi\\
			&  + o(\epsilon^{4}).\\
		\end{aligned}
	\end{equation*}
	\end{proof}

	\subsection{The proof of Lemma \ref{lemma4.4}}
	\begin{proof}
		\begin{equation*}
		\begin{aligned}			
			(d^*dA)_1 &= -\partial_3A_{31} - \partial_4A_{41} + O(\epsilon^{2})	\\
			&= \cos\tilde\phi \cdot\epsilon g_s(\frac{V''}{\tilde r} - \frac{V'}{\tilde r^2}) - \sin\tilde\phi\cdot\epsilon f_s(\frac{V''}{\tilde r} - \frac{V'}{\tilde r^2})\\
			&+O(\epsilon^2),\\
			(d^*dA)_2 &= -\partial_3A_{32} - \partial_4A_{42} + O(\epsilon^{2})	\\
			& = \cos\tilde\phi \cdot\epsilon g_\theta(\frac{V''}{r} - \frac{V'}{r^2}) - \sin\tilde\phi\cdot\epsilon f_\theta(\frac{V''}{r} - \frac{V'}{r^2})\\
			&+O(\epsilon^2),\\
		\end{aligned}
	\end{equation*}
	
	\begin{equation*}
		\begin{aligned}
			(d^*dA)_3 &= -\epsilon^2\rho^{-6}\partial_1A_{13}
			-\epsilon^2\rho^{-2}\partial_2A_{23} - b\cos2s\cdot\epsilon^2\rho^{-2} \partial_3A_{23}\\
			&+a\cos2s\cdot\epsilon^2\rho^{-2}\partial_4A_{23}   +a\cos2s\cdot\epsilon^2\rho^{-2}\partial_2A_{43} \\
			&-(-ab(\cos2s)^2\epsilon^2\rho^{-2}  + 2a^2b(\cos2s)^2\epsilon^3\rho^{-5} )\partial_3A_{43}\\
			&-(1+a^2(\cos2s)^2\epsilon^2\rho^{-2} - 2a^3(\cos2s)^2\epsilon^3\rho^{-5} )\partial_4A_{43} \\
			&-2\epsilon^2\rho^{-4}\cos2s \cdot A_{13} + O(\epsilon^4)\\
			&+(2b\epsilon^2\rho^{-6} + b(\cos2s)^2\epsilon^2\rho^{-2}  + 2ab(\cos2s)^2\epsilon^3\rho^{-5} - 4ab\epsilon^3\rho^{-9})A_{43},\\
		\end{aligned}
	\end{equation*}
	
	\begin{equation*}
		\begin{aligned}
			(d^*dA)_4 &= -\epsilon^2\rho^{-6}\cdot\partial_1A_{14}
			 -\epsilon^2\rho^{-2}\partial_2A_{24} - b\cos2s\cdot\epsilon^2\rho^{-2}\partial_3A_{24}\\
			&+a\cos2s\cdot\epsilon^2\rho^{-2}\partial_4A_{24}    -b\cos2s\cdot\epsilon^2\rho^{-2}\partial_2A_{34}\\
			&-(1+b^2(\cos2s)^2\cdot\epsilon^2\rho^{-2} -2ab^2(\cos2s)^2\cdot\epsilon^3\rho^{-5})\partial_3A_{34}\\
			&-(-ab(\cos2s)^2\cdot\epsilon^2\rho^{-2} +2a^2b(\cos2s)^2\cdot\epsilon^3\rho^{-5})\partial_4A_{34}\\
			&+ (-2\epsilon^2\rho^{-4}\cos2s )A_{14}  +O(\epsilon^4)  \\
			&+(2a\epsilon^2\rho^{-6}  + a(\cos2s)^2\epsilon^2\rho^{-2}   - (2a^2 + 4b^2)(\cos2s)^2\epsilon^3\rho^{-5} + 4b^2\epsilon^3\rho^{-9})A_{34}.\\
		\end{aligned}
	\end{equation*}
	\end{proof}

			\subsection{The proof of Lemma \ref{lemma4.5}}

			\begin{proof}
				\begin{equation*}
		\begin{aligned}
			Im(\nabla_A\psi\cdot\bar{\psi}) &= (\psi_1 \partial_1\psi_2 - \psi_2\partial_1\psi_1 - A_1|\psi|^2)dy^1 \\
			&+ (\psi_1\partial_2\psi_2 - \psi_2\partial_2\psi_1 - A_2|\psi|^2)dy^2\\
			&+(-\frac{U^2}{\tilde r}\sin\tilde\phi - A_3|\psi|^2)dy^3
			+(\frac{U^2}{\tilde r}\cos\tilde\phi - A_4|\psi|^2)dy^4\\
			&= (\epsilon f_s \cdot \frac{U^2 - VU^2}{\tilde r}\sin\tilde\phi - \epsilon g_s\cdot\frac{U^2 - VU^2}{\tilde r}\cos\tilde\phi)dy^1\\
			&+ (\epsilon f_\theta \cdot \frac{U^2 - VU^2}{\tilde r}\sin\tilde\phi - \epsilon g_\theta\cdot\frac{U^2- VU^2}{\tilde r}\cos\tilde\phi)dy^2\\
			&+(-\frac{U^2}{\tilde r}\sin\tilde\phi - A_3|\psi|^2)dy^3
			+(\frac{U^2}{\tilde r}\cos\tilde\phi - A_4|\psi|^2)dy^4.\\
		\end{aligned}
	\end{equation*}	
			\end{proof}

		\section{The proof of lemma \ref{the boundary condition}}	
	\begin{proof}

	Recall that
	\begin{equation*}
	\begin{aligned}
		\omega(x) &= -2\pi\int_\Gamma *\left[d_x\left(G(x-y)\right)\wedge\chi(y)\right]dH^2(y)\\
		&= \frac{\pi}{4\omega_4}\int_{\Gamma}i_{\frac{d_x|x-y|^2}{|x-y|^4}}(*\chi(y))dH^2(y).
	\end{aligned}
	\end{equation*}
	Denote $x = (\rho_1e^{i\theta_1}, \rho_2e^{i\theta_2})$, $y = (\frac{\sqrt2}{2}\rho e^{i\theta}, \frac{\sqrt2}{2}\rho^{-1}e^{i\theta})$, $\alpha = \theta_2 - \theta_1$. Let $e_1 = \frac{1}{\sqrt{1+\rho^{-4}}}(\partial_{\rho_1} - \rho^{-2}\partial_{\rho_2})$, $e_2 = \frac{\sqrt2}{\sqrt{\rho^2 + \rho^{-2}}}(\partial_{\theta_1} + \partial_{\theta_2})$, which are induced by $\partial_\rho$ and $\partial_\theta$. Then by direct calculation, when $\rho_1 = \rho_2$, we have:
	
	1) If $\sqrt{1+\cos\alpha}\le \rho_1$, then
	\begin{equation*}
		\begin{aligned}
			&\tilde \rho(x)^2 + \tilde \rho(x)^{-2} + 2\cos\alpha = 2\rho_1^2, \text{ where $(\frac{\sqrt2}{2}\tilde\rho e^{i\tilde\theta}, \frac{\sqrt2}{2}\tilde\rho^{-1}e^{i\tilde\theta})$ is the closest points in $\Gamma$ to $x$}. \\
			&a(x) = \cos\alpha\sqrt{\rho_1^2-\cos\alpha}, \text{ }b(x) = \sin\alpha\sqrt{\rho_1^2 - \cos\alpha},\\
			&dist(x, \Gamma) = \sqrt{\rho_1^2 - \cos\alpha},\\
			&d\phi = -d\theta_1 + d\theta_2.\\
		\end{aligned}
	\end{equation*}
	
	2) If $\sqrt{1+\cos\alpha}> \rho_1$, then
	\begin{equation*}
		\begin{aligned}
			a(x) = \rho_1\sqrt{1+\cos\alpha} -1, \text{ } b(x) = \frac{\rho_1\sin\alpha}{\sqrt{1+\cos\alpha}}.
		\end{aligned}
	\end{equation*}
	so $d\phi = -j(\alpha)d\theta_1 + j(\alpha)d\theta_2$, where $j(\alpha)$ is a smooth function.		
	\begin{equation*}
		\begin{aligned}
			e^1 = \frac{1}{\sqrt{1+\rho^{-4}}}(d\rho_1 - \rho^{-2}d\rho_2), \ e^2 = \frac{1}{\sqrt{2\rho^2 + 2\rho^{-2}}}(\rho^2d\theta_1 + \rho^{-2}d\theta_2), \\
			(1+\rho^{-4})\chi = \frac{1}{\sqrt2}\rho d\rho_1\wedge d\theta_1 + \frac{1}{\sqrt2}\rho^{-3}d\rho_1\wedge d\theta_2 - \frac{1}{\sqrt2}\rho^{-1}d\rho_2\wedge d\theta_1 - \frac{1}{\sqrt2}\rho^{-5}d\rho_2\wedge d\theta_2.
		\end{aligned}
	\end{equation*}		
	The metric matrix with respect to $(\rho_1,\theta_1,\rho_2,\theta_2)$ is :
	
	$$\begin{pmatrix}
		1 &0&0&0\\
		0&\rho_1^2&0&0\\
		0&0&1&0\\
		0&0&0&\rho_2^2
	\end{pmatrix},$$
	
	\begin{equation*}
		\begin{aligned}
			*(d\rho_1\wedge d\theta_1) & = \rho_1^{-1}\rho_2d\rho_2\wedge d\theta_2,\\
			*(d\rho_1\wedge d\theta_2) &= -\rho_1\rho^{-1}_2d\rho_2\wedge d\theta_1,\\
			*(d\rho_2 \wedge d\theta_1) &= -\rho_1^{-1}\rho_2d\rho_1\wedge d\theta_2,\\
			*(d\rho_2\wedge d\theta_2) &= \rho_1\rho_2^{-1}d\rho_1\wedge d\theta_1.
		\end{aligned}
	\end{equation*}
	So we can get :
	\begin{equation*}
		\begin{aligned}
			(1 + \rho^{-4})*\chi &= \frac{1}{\sqrt2}\rho^{-1}d\rho_2\wedge d\theta_2 - \frac{1}{\sqrt2}\rho^{-1}d\rho_2\wedge d\theta_1\\
			&+ \frac{1}{\sqrt2}\rho^{-3}d\rho_1\wedge d\theta_2 - \frac{1}{\sqrt2}\rho^{-3} d\rho_1\wedge d\theta_1,
		\end{aligned}
	\end{equation*}

	\begin{equation*}
		|x-y|^2 = \rho_1^2 + \rho_2^2 + \frac{1}{2}\rho^2 + \frac{1}{2}\rho^{-2} - \sqrt{2}\rho_1\rho\cos(\theta - \theta_1) - \sqrt{2}\rho_2\rho^{-1}\cos(\theta - \theta_2).
	\end{equation*}
	
	In $\Gamma$, we know:
	\begin{equation*}
		\begin{aligned}
			\partial_\rho &= \partial_{\rho_1} - \rho^{-2}\partial_{\rho_2},\\
			\partial_\theta &= \partial_{\theta_1} + \partial_{\theta_2}.\\
		\end{aligned}
	\end{equation*}

We remark that, we denote $d\rho_1$ as $d\rho_1(y)$ for simplicity. Similarly $d\theta_1$, $d\rho_2$, $d\theta_2$ are the same.

\begin{equation*}
		\begin{aligned}
			d\rho_1(x) &= \cos(\theta - \theta_1)d\rho_1 - \sin(\theta - \theta_1)\frac{\sqrt{2}}{2}\rho d\theta_1,\\
			\rho_1(x)d\theta_1(x) & = \sin(\theta - \theta_1)d\rho_1 + \cos(\theta - \theta_1)\frac{\sqrt{2}}{2}\rho d\theta_1,\\
			d\rho_2(x) &= \cos(\theta - \theta_2)d\rho_2 - \sin(\theta - \theta_2)\frac{\sqrt{2}}{2}\rho^{-1} d\theta_2,\\
			\rho_2(x)d\theta_2(x) & = \sin(\theta - \theta_2)d\rho_2 + \cos(\theta - \theta_2)\frac{\sqrt{2}}{2}\rho^{-1} d\theta_2.\\
		\end{aligned}
	\end{equation*}
	
	Conversely,
	\begin{equation*}
		\begin{aligned}
			d\rho_1 &= \cos(\theta - \theta_1)d\rho_1(x) + \rho_1\sin(\theta - \theta_1)d\theta_1(x),\\
			\rho d\theta_1 &= \sqrt2\rho_1\cos(\theta - \theta_1)d\theta_1(x) - \sqrt2\sin(\theta - \theta_1)d\rho_1(x),\\
			d\rho_2 &= \cos(\theta - \theta_2)d\rho_2(x) + \rho_2\sin(\theta - \theta_2)d\theta_2(x),\\
			\rho^{-1}d\theta_2 &= \sqrt2\rho_2\cos(\theta - \theta_2)d\theta_2(x) - \sqrt2\sin(\theta - \theta_2)d\rho_2(x),
		\end{aligned}
	\end{equation*}
			
	\begin{equation*}
		\begin{aligned}
			d_x|x-y|^2 &= [2\rho_1 - \sqrt2 \rho\cos(\theta - \theta_1)]d\rho_1(x) - \sqrt2\rho_1\rho\sin(\theta - \theta_1)d\theta_1(x)\\
			& + [2\rho_2 - \sqrt2 \rho^{-1}\cos(\theta - \theta_2) ]d\rho_2(x) - \sqrt2 \rho_2\rho^{-1}\sin(\theta - \theta_2)d\theta_2(x)\\
			&= [2\rho_1 - \sqrt2 \rho\cos(\theta - \theta_1)]\cdot[\cos(\theta - \theta_1)d\rho_1 - \sin(\theta - \theta_1)\cdot \frac{\sqrt2}{2}\rho d\theta_1 ]\\
			&-\sqrt2\rho\sin(\theta - \theta_1)\cdot[\sin(\theta - \theta_1)d\rho_1 + \cos(\theta - \theta_1)\cdot\frac{\sqrt2}{2}\rho d\theta_1  ]\\
			&+ [2\rho_2 - \sqrt2 \rho^{-1}\cos(\theta - \theta_2) ]\cdot[\cos(\theta - \theta_2)d\rho_2 - \sin(\theta - \theta_2)\cdot \frac{\sqrt2}{2}\rho^{-1} d\theta_2]\\
			&- \sqrt2 \rho^{-1}\sin(\theta - \theta_2)\cdot [\sin(\theta - \theta_2)d\rho_2 + \cos(\theta - \theta_2)\cdot \frac{\sqrt2}{2}\rho^{-1} d\theta_2]\\
			& = 2\rho_1 \cos(\theta - \theta_1)d\rho_1 - \sin(\theta - \theta_1)\cdot \sqrt2\rho\rho_1 d\theta_1 - \sqrt2\rho d\rho_1\\
			&+ 2\rho_2 \cos(\theta - \theta_2)d\rho_2 - \sin(\theta - \theta_2)\cdot \sqrt2\rho^{-1}\rho_2 d\theta_2 - \sqrt2\rho^{-1}d\rho_2,
		\end{aligned}
	\end{equation*}

	\begin{equation*}
		\begin{aligned}
			&(1 + \rho^{-4})i_{d_x|x-y|^2}(*\chi) = [2\rho_1 \cos(\theta - \theta_1) - \sqrt2\rho ]\cdot (\frac{1}{\sqrt2}\rho^{-3}d\theta_2 - \frac{1}{\sqrt2}\rho^{-3}d\theta_1)\\
			& - \sin(\theta - \theta_1)\cdot \sqrt2\rho\rho_1 [\sqrt{2}\rho^{-3}d\rho_2 + \sqrt{2}\rho^{-5}d\rho_1]\\
			&+[2\rho_2 \cos(\theta - \theta_2) - \sqrt2\rho^{-1}] \cdot(\frac{1}{\sqrt2}\rho^{-1}d\theta_2 - \frac{1}{\sqrt2}\rho^{-1}d\theta_1)\\
			& - \sin(\theta - \theta_2)\cdot \sqrt2\rho^{-1}\rho_2 \cdot[-\sqrt{2}\rho d\rho_2 -  \sqrt{2}\rho^{-1}d\rho_1  ]\\
			&= [2\rho^{-2}\rho_2\sin(\theta - \theta_2) - 2\rho^{-4}\rho_1\sin(\theta - \theta_1)]d\rho_1 \\
			&+ [2\rho_2\sin(\theta - \theta_2) - 2\rho^{-2}\rho_1\sin(\theta - \theta_1)]d\rho_2\\
			& + [2\rho^{-2} - \sqrt{2}\rho_1\rho^{-3}\cos(\theta - \theta_1)  - \sqrt{2}\rho^{-1}\rho_2\cos(\theta - \theta_2)]d\theta_1\\
			&+ [-2\rho^{-2} + \sqrt{2}\rho^{-3}\rho_1\cos(\theta - \theta_1) + \sqrt{2}\rho^{-1}\rho_2\cos(\theta - \theta_2) ]d\theta_2\\
			&= [2\rho^{-2}\rho_2\sin(\theta - \theta_2) - 2\rho^{-4}\rho_1\sin(\theta - \theta_1)]\cdot[\cos(\theta - \theta_1)d\rho_1(x) + \rho_1\sin(\theta - \theta_1)d\theta_1(x)] \\
			&+ [2\rho_2\sin(\theta - \theta_2) - 2\rho^{-2}\rho_1\sin(\theta - \theta_1)]\cdot[\cos(\theta - \theta_2)d\rho_2(x) + \rho_2\sin(\theta - \theta_2)d\theta_2(x)]\\
			& + [2\rho^{-2} - \sqrt{2}\rho_1\rho^{-3}\cos(\theta - \theta_1)  - \sqrt{2}\rho^{-1}\rho_2\cos(\theta - \theta_2)]\cdot\\
			&[\sqrt2\rho^{-1}\rho_1\cos(\theta - \theta_1)d\theta_1(x) - \sqrt2\rho^{-1}\sin(\theta - \theta_1)d\rho_1(x)]\\
			&+ [-2\rho^{-2} + \sqrt{2}\rho^{-3}\rho_1\cos(\theta - \theta_1) + \sqrt{2}\rho^{-1}\rho_2\cos(\theta - \theta_2) ]\cdot\\
			&[\sqrt2\rho\rho_2\cos(\theta - \theta_2)d\theta_2(x) - \sqrt2\rho\sin(\theta - \theta_2)d\rho_2(x)]\\
			&= [-2\sqrt{2}\rho^{-3}\sin(\theta - \theta_1) + 2\rho^{-2}\rho_2\sin(2\theta - \theta_1 - \theta_2)]d\rho_1(x)\\
			&+ [2\sqrt{2}\rho^{-1}\sin(\theta - \theta_2) - 2\rho^{-2}\rho_1\sin(2\theta - \theta_1 - \theta_2)]d\rho_2(x)\\
			&+ [-2\rho^{-4}\rho_1^2 + 2\sqrt{2}\rho^{-3}\rho_1\cos(\theta - \theta_1) - 2\rho^{-2}\rho_1\rho_2\cos(2\theta - \theta_1 - \theta_2)]d\theta_1(x)\\
			&+ [2\rho_2^2 - 2\sqrt{2}\rho^{-1}\rho_2\cos(\theta - \theta_2) + 2\rho^{-2}\rho_1\rho_2\cos(2\theta - \theta_1 - \theta_2)]d\theta_2(x).
		\end{aligned}
	\end{equation*}

	Besides, for the manifold $\Gamma$, the metric matrix with respect to $(\rho, \theta)$ is:
	$$\begin{pmatrix}
		\frac{1}{2} + \frac{1}{2}\rho^{-4}&0\\
		0&\frac{1}{2}\rho^2 + \frac{1}{2}\rho^{-2}
	\end{pmatrix}.$$
	
	Therefore, we finally get:
	\begin{equation*}
		\begin{aligned}
			\omega(x) &= \frac{\pi}{4\omega_4}\int_{\Gamma}i_{\frac{d_x|x-y|^2}{|x-y|^4}}(*\chi(y))dH^2(y)\\
			&= \frac{\pi}{8\omega_4}\int_{\rho, \theta} [\rho_1^2 + \rho^2_2 + \frac{1}{2}\rho^2 + \frac{1}{2}\rho^{-2} - \sqrt{2}\rho_1\rho\cos(\theta - \theta_1) - \sqrt{2}\rho_2\rho^{-1}\cos(\theta - \theta_2)]^{-2}\cdot\\
			&  \{[-2\sqrt{2}\rho^{-3}\sin(\theta - \theta_1) + 2\rho^{-2}\rho_2\sin(2\theta - \theta_1 - \theta_2)]d\rho_1(x)\\
			&+ [2\sqrt{2}\rho^{-1}\sin(\theta - \theta_2) - 2\rho^{-2}\rho_1\sin(2\theta - \theta_1 - \theta_2)]d\rho_2(x)\\
			&+ [-2\rho^{-4}\rho_1^2 + 2\sqrt{2}\rho^{-3}\rho_1\cos(\theta - \theta_1) - 2\rho^{-2}\rho_1\rho_2\cos(2\theta - \theta_1 - \theta_2)]d\theta_1(x)\\
			&+ [2\rho_2^2 - 2\sqrt{2}\rho^{-1}\rho_2\cos(\theta - \theta_2) + 2\rho^{-2}\rho_1\rho_2\cos(2\theta - \theta_1 - \theta_2)]d\theta_2(x)\}\cdot \rho d\rho d\theta\\
			&=\frac{\pi}{8\omega_4}\int_{\rho, \theta} |x-y|^{-4}\cdot
			 \{[-2\sqrt{2}\rho^{-2}\sin(\theta - \theta_1) + 2\rho^{-1}\rho_2\sin(2\theta - \theta_1 - \theta_2)]d\rho_1(x)\\
			&+ [2\sqrt{2}\sin(\theta - \theta_2) - 2\rho^{-1}\rho_1\sin(2\theta - \theta_1 - \theta_2)]d\rho_2(x)\\
			&+ [-2\rho^{-3}\rho_1^2 + 2\sqrt{2}\rho^{-2}\rho_1\cos(\theta - \theta_1) - 2\rho^{-1}\rho_1\rho_2\cos(2\theta - \theta_1 - \theta_2)]d\theta_1(x)\\
			&+ [2\rho_2^2\rho - 2\sqrt{2}\rho_2\cos(\theta - \theta_2) + 2\rho^{-1}\rho_1\rho_2\cos(2\theta - \theta_1 - \theta_2)]d\theta_2(x)\}d\rho d\theta.
		\end{aligned}
	\end{equation*}
	
	If we change $\theta$ into $-\theta + \theta_1 + \theta_2$, then:
	\begin{equation*}
		\begin{aligned}
			\theta - \theta_1 \rightarrow -\theta + \theta_2,\\
			\theta - \theta_2 \rightarrow -\theta + \theta_1.
		\end{aligned}
	\end{equation*}
	
	Thus, if $\rho_1 = \rho_2$, i.e. $x \in \{(\rho_1, \theta_1, \rho_2, \theta_2 ):\rho_1 = \rho_2\}$, we have
	
	\begin{equation*}
		\begin{aligned}
			\omega(x) &= \frac{\pi}{8\omega_4}\int_{\rho \ge 1, 0\le\theta < 2\pi} |x-y|^{-4}\cdot
			 \{[-2\sqrt{2}\rho^{-2}\sin(\theta - \theta_1) + 4\rho^{-1}\rho_1\sin(2\theta - \theta_1 - \theta_2) \\
			 &- 2\sqrt2\sin(\theta - \theta_2)  ]d\rho_1(x)\\
			&+ [2\sqrt{2}\sin(\theta - \theta_2) - 4\rho^{-1}\rho_1\sin(2\theta - \theta_1 - \theta_2) +2\sqrt{2}\rho^{-2}\sin(\theta - \theta_1) ]d\rho_2(x)\\
			&+ [-2\rho^{-3}\rho_1^2 + 2\sqrt{2}\rho^{-2}\rho_1\cos(\theta - \theta_1)  + 2\rho_1^2\rho - 2\sqrt{2}\rho_1\cos(\theta - \theta_2)  ]d\theta_1(x)\\
			&+ [2\rho_1^2\rho - 2\sqrt{2}\rho_1\cos(\theta - \theta_2) - 2\rho_1^2\rho^{-3} + 2\sqrt{2}\rho_1\rho^{-2}\cos(\theta - \theta_1) ]d\theta_2(x)\}d\rho d\theta.
		\end{aligned}
	\end{equation*}

	Note that the tangent space of $\{\rho_1 = \rho_2\}$ is spanned by $\partial_{\rho_1} + \partial_{\rho_2}$, $\partial_{\theta_1}$, $\partial_{\theta_2}$. So we get that:
	
	\begin{equation*}
		\begin{aligned}
			&\omega_\top(x)  = \frac{\pi}{8\omega_4}\int_{\rho \ge 1, 0\le\theta < 2\pi} |x-y|^{-4}\cdot
			 \{[-2\rho^{-3}\rho_1^2 + 2\sqrt{2}\rho^{-2}\rho_1\cos(\theta - \theta_1)  \\
			 &+ 2\rho_1^2\rho - 2\sqrt{2}\rho_1\cos(\theta - \theta_2)  ]d\theta_1(x)\\
			&+ [2\rho_1^2\rho - 2\sqrt{2}\rho_1\cos(\theta - \theta_2) - 2\rho_1^2\rho^{-3} + 2\sqrt{2}\rho_1\rho^{-2}\cos(\theta - \theta_1) ]d\theta_2(x)\}d\rho d\theta.
		\end{aligned}
	\end{equation*}
	
	\end{proof}

	\end{appendices}

\end{document}